\renewcommand*{\backref}[1]{}
\renewcommand*{\backrefalt}[4]{\quad \tiny 
  \ifcase #1 (\textbf{NOT CITED.})%
  \or    (Cited on page~#2.)%
  \else   (Cited on pages~#2.)%
  \fi}
\def\MRbibitem{\@ifnextchar[\my@lbibitem\my@bibitem}
\def\mybiblabel#1#2{\@biblabel{{\hyperref{http://www.ams.org/mathscinet-getitem?mr=#1}{}{}{#2}}}}
\def\myhyperanchor#1{\Hy@raisedlink{\hyper@anchorstart{cite.#1}\hyper@anchorend}}
\def\my@lbibitem[#1]#2#3#4\par{%
  \item[\mybiblabel{#2}{#1}\myhyperanchor{#3}\hfill]#4%
  \@ifundefined{ifbackrefparscan}{}{\BR@backref{#3}}%
  \if@filesw{\let\protect\noexpand\immediate
    \write\@auxout{\string\bibcite{#3}{#1}}}\fi\ignorespaces%
}
\def\my@bibitem#1#2#3\par{%
  \refstepcounter\@listctr
  \item[\mybiblabel{#1}{\the\value\@listctr}\myhyperanchor{#2}\hfill]#3%
  \@ifundefined{ifbackrefparscan}{}{\BR@backref{#2}}%
  \if@filesw\immediate\write\@auxout
    {\string\bibcite{#2}{\the\value\@listctr}}\fi\ignorespaces%
}
\declaretheoremstyle[
headfont=\small\itshape,
bodyfont=\small
]{myremark}
\declaretheorem[numberwithin=section]{theorem}
\declaretheorem[sibling=theorem]{lemma}
\declaretheorem[sibling=theorem]{proposition}
\declaretheorem[sibling=theorem,style=remark]{remark}
\declaretheorem[sibling=theorem,style=remark]{question}
\declaretheorem[name=Acknowledgements, style=remark, numbered=no]{ack}
\numberwithin{equation}{section}     
\crefname{subsection}{Subsection}{Subsections}
\Crefname{subsection}{Subsection}{Subsections}
\setlist[enumerate,1]{label={\upshape(\alph*)},ref=\alph*}
\setlist[enumerate,2]{label={\upshape(\arabic*)},ref=\arabic*}
\crefname{enumi}{part}{parts}
\newcommand{\C}{\mathbb{C}}
\newcommand{\R}{\mathbb{R}}
\newcommand{\Z}{\mathbb{Z}}
\newcommand{\T}{\mathbb{T}}
\renewcommand{\P}{\mathbb{P}}
\newcommand{\SL}{\mathrm{SL}}
\newcommand{\cC}{\mathcal{C}}
\newcommand{\cH}{\mathcal{H}}
\newcommand{\cK}{\mathcal{K}}\newcommand{\cL}{\mathcal{L}}
\newcommand{\cS}{\mathcal{S}}
\newcommand{\fJ}{\mathfrak{J}}\newcommand{\fL}{\mathfrak{L}}
\newcommand{\fM}{\mathfrak{M}}
\newcommand{\st}{\;\mathord{;}\;}
\newcommand{\dd}{\,\mathrm{d}}   
\newcommand{\area}[1]{\lvert#1\rvert}
\newcommand{\vol}[1]{\lvert#1\rvert}
\newcommand{\arxiv}[1]{Preprint \href{http://arxiv.org/abs/#1}{arXiv:{#1}}}
\renewcommand{\epsilon}{\varepsilon}
\renewcommand{\setminus}{\smallsetminus}
\begin{document}

\title[On the approximation of convex bodies by ellipses]{On the approximation of convex bodies by ellipses with respect to the symmetric difference metric}

\author{Jairo Bochi}
\email{\href{mailto:jairo.bochi@mat.uc.cl}{jairo.bochi@mat.uc.cl}}
\address{Facultad de Matem\'aticas, Pontificia Universidad Cat\'olica de Chile. Avenida Vicu\~na Mackenna 4860, Santiago, Chile}
\thanks{Partially supported by projects Fondecyt 1180371 and Conicyt PIA ACT172001.}

\date{June 3, 2018}

\keywords{Convex bodies, ellipsoids, symmetric difference metric, approximation}

\subjclass[2010]{52A10; 90C26, 51K99}


\maketitle

\begin{abstract}
Given a centrally symmetric convex body $K \subset \R^d$ and a positive number $\lambda$, we consider, among all ellipsoids $E \subset \R^d$ of volume $\lambda$, those that best approximate $K$ with respect to the symmetric difference metric, or equivalently that maximize the volume of $E\cap K$: these are the maximal intersection (MI) ellipsoids introduced by Artstein-Avidan and Katzin. 
The question of uniqueness of MI ellipsoids (under the obviously necessary assumption that $\lambda$ is between the volumes of the John and the Loewner ellipsoids of $K$) is open in general.
We provide a positive answer to this question in dimension $d=2$. 
Therefore we obtain a continuous $1$-parameter family of ellipses interpolating between the John and the Loewner ellipses of $K$.
In order to prove uniqueness, we show that the area $I_K(E)$ of the intersection $K \cap E$ is a strictly quasiconcave function of the ellipse $E$, with respect to the natural affine structure on the set of ellipses of area $\lambda$.  
The proof relies on smoothening $K$, putting it in general position, and obtaining uniform estimates for certain derivatives of the function $I_K(\mathord{\cdot})$.
Finally, we provide a characterization of maximal intersection positions, that is, the situation where the MI ellipse of $K$ is the unit disk, under the assumption that the two boundaries are transverse.
\end{abstract}

\section{Introduction}

\subsection{Convex bodies and approximation problems}

The euclidian distance induces the well-known \emph{Hausdorff metric} on the set $\cS^d$ of nonempty compact subsets of $\R^d$. Namely, $d_\mathrm{Haus}(K_1,K_2)$ is defined as the least $\epsilon\ge 0$ such that every point in one of the sets $K_i$ is within euclidian distance at most $\epsilon$ from some point in the other set. 
By Blaschke selection theorem \cite[p.~37]{Falconer}, bounded subsets of $\cS^d$ are compact; in particular the metric space $(\cS^d, d_\mathrm{Haus})$ is complete and locally compact.

We are interested in the space $\cK^d$ of \emph{convex bodies} (i.e., compact convex sets with nonempty interior), which is a locally closed subset of $\cS^d$.
There are other natural metrics on $\cK^d$ that also induce the Hausdorff topology: see \cite{SW}.
Among these, we highlight the \emph{symmetric difference metric} and the \emph{normalized symmetric difference metric}: 
\begin{equation}\label{e.metrics}
d_\mathrm{sym}  (K_1, K_2) \coloneqq \vol{K_1 \vartriangle K_2} \, , \qquad
d_\mathrm{nsym} (K_1, K_2) \coloneqq \frac{\vol{K_1 \vartriangle K_2}}{\vol{K_1 \cup K_2}}  \, 
\end{equation}
where $\vol{\mathord{\cdot}}$ denotes volume (Lebesgue measure) in $\R^d$.
These two metrics make sense in broader classes of sets and are known in Measure Theory as the Fr\'echet--Nikodym and the Marczewski--Steinhaus pseudometrics, respectively.
Note that the metric $d_\mathrm{Haus}$ is preserved by the action euclidian isometries of $\R^d$, while $d_\mathrm{sym}$ is preserved by volume-preserving affine transformations, and $d_\mathrm{nsym}$ is preserved by all affine transformations.
In this paper we focus on the symmetric difference metric $d_\mathrm{sym}$. 

There is a large body of literature on approximation of convex bodies by simpler ones, as e.g.\ polyhedra: see the survey articles \cite{Gruber93,Bron}. Let us mention a few of the most classical results. 
Given a plane convex body $K \in \cK^2$, for each $n\ge 3$, let $P_n^{(1)}$ be an inscribed $n$-gon of maximal area, let $P_n^{(3)}$ be a circumscribed $n$-gon of minimal area, and let $P_n^{(2)}$ be a convex $n$-gon that best approximates $K$ with respect to the symmetric difference metric. The approximation errors $\epsilon_n^{(i)} \coloneqq d_\mathrm{sym} \big(K,P_n^{(i)}\big)$ obviously tend to zero. Dowker \cite{Dowker} (see also \cite[\S~II.3]{FT}) proved that the sequence $(\epsilon_n^{(1)})$ is concave and the sequence $(\epsilon_n^{(3)})$ is convex, and Eggleston \cite{Egg} proved that the sequence $(\epsilon_n^{(2)})$ is also convex. On the other hand, L.~Fejes T\'oth stated in his famous book \cite[p.~43]{FT} that if $\partial K$ is sufficiently differentiable and positively curved then each of these three sequences is asymptotic to $c_i n^{-2}$, for some explicitly defined constant $c_i = c_i(K)>0$; curiously, $(c_1,c_2,c_3)$ is proportional to $\big( 1, \tfrac{3}{4}, 2 \big)$.  These formulas were later proved by McClure and Vitale \cite{MCV} for $i=1$ and $3$, and by Ludwig \cite{Ludwig} for $i=2$. For higher-dimensional versions of these results, see \cite{GK,Gruber93,Ludwig}.

Another class of ``simple'' convex bodies consists on ellipsoids. 
Let us note that ellipsoids are the convex bodies that are worst approximable by polytopes: see \cite[Rem.~2]{Ludwig}.

It is well-known that every convex body $K$ admits a unique inscribed ellipsoid $\fJ_K$ of maximal volume and a unique circumscribed ellipsoid  $\fL_K$ of minimal volume; they are called respectively the \emph{John ellipsoid} and the \emph{Loewner ellipsoid} of $K$. 
Moreover, if $K$ is \emph{centrally symmetric} in the sense that $K=-K$, then so are the ellipsoids $\fJ_K$ and $\cL_K$.
See \cite[Lecture~3]{Ball97} for proofs, \cite{Henk} for historical information, and \cite[\S~10.12]{Schneider} for other types of ellipsoids associated to a convex body.

Our original motivation comes from the following approximation problem posed by W.~Kuperberg \cite{Kuperberg}:

\begin{question}\label{q.Wlodek}
If $K$ is a plane convex body of area~$1$, and if $E$ is an ellipse of area~$1$ that minimizes $d_\mathrm{sym}(K,E)$ among all such ellipses, is $E$ necessarily unique?
\end{question}

In this paper, we answer this question positively under the assumption that $K$ is centrally symmetric. 
Actually, we prove uniqueness of a family of a certain ellipses that includes $E$ and interpolates between the John and the Loewner ellipses, as explained below.

\subsection{Maximal intersection ellipsoids}

Let $\cC^d \subset \cK^d$ denote the set of centrally symmetric $d$-dimensional convex bodies, where $d \ge 2$.
Following Artstein-Avidan and Katzin \cite{AAK}, we say that an ellipsoid $E \subset \R^d$ is a \emph{maximal intersection (MI) ellipsoid} for $K \in \cC^d$ if among all ellipsoids with the same volume as $E$, it maximizes the volume of $E \cap K$.
In view of the relation
\begin{equation}\label{e.inclusion_exclusion}
d_\mathrm{sym}(K,E) =  \vol{K} + \vol{E} - 2 \vol{K \cap E} \, ,
\end{equation}
it is equivalent to say that $E$ is an optimal approximation for $K$ with respect to the symmetric difference metric, among all ellipsoids of a fixed volume. 

Immediate examples of MI ellipsoids are the John ellipsoid $\fJ_K$ and the Loewner ellipsoid $\fL_K$. Furthermore, there are no other MI ellipsoids with volume $\vol{\fJ_K}$ or $\vol{\fL_K}$.
On other hand, if $\lambda>0$ is either smaller than $\vol{\fJ_K}$ or bigger than $\vol{\fL_K}$ then $K$ obviously admits infinitely many MI ellipsoids of volume $\lambda$. 
Artstein-Avidan and Katzin \cite{AAK} ask whether uniqueness of MI ellipsoids holds when $\lambda$ is in the interesting range $\vol{\fJ_K} < \lambda < \vol{\fL_K}$. We provide a positive answer for this question in dimension $d=2$:

\begin{theorem}\label{t.unique}
Let $K \subset \R^2$ be a centrally symmetric convex body, and let $\lambda$ be a number in the range $\area{\fJ_K} \le \lambda \le \area{\fL_K}$. Then there exists a unique MI ellipse $\fM_K(\lambda)$ of area $\lambda$, and it is centrally symmetric. 
\end{theorem}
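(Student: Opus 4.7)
The plan is to deduce uniqueness from a \emph{strict quasiconcavity} property of the intersection-area functional
\[
I_K(E) \coloneqq \area{K \cap E}
\]
on the space $\cE_\lambda$ of ellipses of area $\lambda$, with respect to the natural affine structure mentioned in the abstract. Existence of an MI ellipse is immediate from continuity of $I_K$ together with compactness of the relevant set of candidate ellipses. Central symmetry of the MI ellipse is a consequence of uniqueness together with the observation that, since $K = -K$, the reflection $-E$ of any MI ellipse $E$ is again MI, forcing $E = -E$. Thus the substance of the theorem reduces to proving strict quasiconcavity of $I_K$ on $\cE_\lambda$.

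The main idea is to analyze the first and second derivatives of $t \mapsto I_K(E_t)$ along an affine segment $\{E_t\}_{t \in [0,1]} \subset \cE_\lambda$. In the favorable case where $\partial K$ is smooth and strictly convex and the boundaries $\partial K$ and $\partial E_t$ meet transversely (so $\partial K \cap \partial E_t$ is a finite set depending smoothly on $t$), one can express $\frac{d}{dt} I_K(E_t)$ as the integral of the normal velocity of $\partial E_t$ over the arc $\partial E_t \cap K$, and $\frac{d^2}{dt^2} I_K(E_t)$ as a boundary integral involving the second variation of the family, plus a finite sum of pointwise contributions at $\partial K \cap \partial E_t$ depending on the curvatures of the two boundaries. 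The goal is to prove that this second derivative is strictly negative at any critical point of $t \mapsto I_K(E_t)$; strict quasiconcavity along the path then follows by a standard one-variable argument.

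For a general centrally symmetric $K \in \cC^2$, I would approximate $K$ by a sequence $K_n$ of smooth, strictly convex, centrally symmetric bodies converging to $K$ in the Hausdorff metric. Since transversality can be arranged by arbitrarily small perturbations of the family $\{E_t\}$, the derivative computation above applies to $I_{K_n}$ along a dense set of paths. The crux is to prove \emph{uniform} (in $n$) estimates on the second derivative at critical points, so that taking $n \to \infty$ transports strict quasiconcavity of $I_{K_n}$ to strict quasiconcavity of $I_K$. Combined with existence of a maximizer on $\cE_\lambda$, this yields uniqueness of $\fM_K(\lambda)$.

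The chief difficulty I expect is precisely obtaining these uniform estimates on $I_K''$. The curvature contributions at each point of $\partial K \cap \partial E_t$ can vary wildly as $K_n \to K$ and the family $E_t$ is perturbed, and one must show that the favorable terms in the second-derivative formula dominate in a way that is robust under both the smoothing and the general-position perturbation. Ruling out degenerate configurations of intersection points in which this dominance fails, so that the sign of $I_K''$ at a critical point is preserved in the limit, seems to be where the careful geometric analysis and the role of the general-position hypothesis---both referenced in the abstract---must enter.
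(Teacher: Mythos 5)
Your overall strategy---reduce uniqueness to strict quasiconcavity of $t \mapsto \area{K \cap E_t}$ along affine segments, compute first and second derivatives under transversality, show $I_K'' < 0$ at critical points, and transport the conclusion to general $K$ via smoothing plus general position plus uniform estimates---is exactly the architecture of the paper's proof of \cref{t.qc}, including the correct identification of the uniform second-derivative bound as the crux. However, there is a genuine gap in how you handle central symmetry of the MI ellipse. You argue that symmetry follows from uniqueness (since $-E$ is MI whenever $E$ is), but your uniqueness argument only establishes quasiconcavity along affine paths in the two-dimensional space of \emph{centrally symmetric} ellipses of area $\lambda$ (the space $\cH$ of the paper's remark); the full space of ellipses of area $\lambda$ is four-dimensional, including the two translation parameters, and your derivative computations say nothing about ellipses with displaced centers. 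So your argument as written only yields uniqueness among centered ellipses, and the symmetry claim is circular: to conclude $E = -E$ from uniqueness you must already know there is no non-centered competitor.

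The paper closes this gap with a separate argument (\cref{l.discard}): by Zalgaller's consequence of the Brunn--Minkowski inequality, the set $M$ of translations $v$ maximizing $\area{K \cap (E_0+v)}$ is convex, and all the intersections $K \cap (E_0+v)$, $v \in M$, coincide up to translation (in fact exactly, by a separate argument). If $M$ contained a nonzero $v_0$, then by symmetry it contains the segment $[-v_0,v_0]$, which forces $K \cap E_0$ to be the intersection of $E_0$ with a strip; the first-derivative formula then shows $I_K'(0) \neq 0$, contradicting maximality. Without this step (or some substitute, e.g.\ extending the quasiconcavity analysis to the translation directions, which your derivative formulas do not cover), your proof is incomplete. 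A minor additional point: the endpoint cases $\lambda = \area{\fJ_K}$ and $\lambda = \area{\fL_K}$ should be dispatched separately via the classical uniqueness of the John and Loewner ellipses, since the quasiconcavity statement requires $\area{\fJ_K} < \lambda < \area{\fL_K}$.
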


In particular, taking $\lambda = \area{K}$, we obtain the announced positive answer for \cref{q.Wlodek} in the centrally symmetric case.

As a simple consequence of uniqueness (using the Blaschke selection theorem), the MI ellipse $\fM_K(\lambda)$ provided by \cref{t.unique} depends continuously on both $K$ and $\lambda$, provided that $\area{\fJ_K} \le \lambda \le \area{\fL_K}$.
In particular, these MI ellipses continuously interpolate between the John and Loewner ellipses.

As remarked in \cite{AAK}, every centrally symmetric convex body in $\R^d$ admits MI ellipsoids of any prescribed volume $\lambda>0$ that are centrally symmetric. In dimension $2$, as an ingredient of the proof of \cref{t.unique}, we need to establish the following:

\begin{lemma}\label{l.discard}
Let $K \subset \R^2$ be a centrally symmetric convex body, and let $\lambda$ be a number in the range $\area{\fJ_K} <  \lambda < \area{\fL_K}$. Then every MI ellipse of area $\lambda$ for $K$ is centrally symmetric.
\end{lemma}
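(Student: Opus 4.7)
My plan is to combine Brunn's slicing theorem with the central symmetry of $K$ and a topological separation argument. Writing the MI ellipse under consideration as $E = c + L\mathbb{D}$ (with $\mathbb{D}$ the closed unit disk and $L$ positive definite), the goal is to show $c = 0$. Since $K = -K$, the reflected ellipse $-E$ is also MI, so I work with the affine family of translates $E(s) := sc + L\mathbb{D}$, $s \in \mathbb{R}$, all of area $\lambda$, satisfying $E(\pm 1) = \pm E$ and $E(0) = L\mathbb{D}$.

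Applying Brunn's slicing theorem to the convex body
\[
C := \{(x,s) \in \mathbb{R}^3 : x \in K \text{ and } x - sc \in L\mathbb{D}\},
\]
whose planar slices have area $h(s) := \area{K \cap E(s)}$, yields that $\sqrt{h}$ is concave. Maximality of $E$ combined with $h(1) = h(-1)$ (via $K = -K$) forces $h$ to be constant on $[-1, 1]$, and the equality case of Brunn--Minkowski supplies a vector $p \in \mathbb{R}^2$ with $K \cap E(s) = B + sp$ for $s \in [-1, 1]$, where $B := K \cap L\mathbb{D}$. The identity $K \cap E(-s) = -(K \cap E(s))$ then yields $B = -B$.

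Next I aim to deduce $p = 0$. The set $A := K \cap E = B + p$ is centrally symmetric about $p$, and the involution $x \mapsto 2p - x$ permutes the boundary arcs of $A$, which consist of arcs of $\partial K$ and of $\partial E$. In the ``non-mixing'' situation, where $\partial K$-arcs pair with $\partial K$-arcs, the identity $\partial K = -\partial K$ shows that $\partial K$ contains a positive-length translate of one of its own arcs by $-2p$; in the ``mixing'' situation, a positive-length arc of $\partial K$ would coincide with an arc of the second ellipse $2p - E$. Both alternatives are impossible for strictly convex $\partial K$, forcing $p = 0$. For general centrally symmetric $K$, I would reduce to the strictly convex case by approximation, taking strictly convex centrally symmetric $K_n \to K$ together with MI ellipses $E_n$ for $K_n$ converging to $E$.

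With $p = 0$ in hand, $K \cap E = K \cap (-E) = B$, so $K \cap (E \triangle (-E)) = \varnothing$, while $B \subset E \cap (-E)$ places $0 \in B^\circ \subset (E \cap (-E))^\circ$. Assume for contradiction $c \neq 0$. Then $\partial E$ and $\partial(-E)$, as distinct translates of $\partial L\mathbb{D}$, cross transversally in exactly two points, so the complement $\mathbb{R}^2 \setminus (\partial E \cup \partial(-E))$ has four open components. A short openness argument (any point of $K^\circ$ lying on $\partial E \cup \partial(-E)$ would force a neighborhood of it in $K^\circ$ to enter a forbidden crescent) gives $K^\circ \subset (E \cap (-E))^\circ \cup (E \cup (-E))^{c,\circ}$, and connectedness of $K^\circ$ places it in one of these two disjoint sets. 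The first alternative gives $K \subset E$, contradicting $\lambda < \area{\fL_K}$; the second gives $\area{K \cap E} = 0$, contradicting $\area{B} > 0$. The main obstacle I foresee is the reduction $p = 0$ in the non-smooth case; the approximation argument must be set up so that the particular MI ellipse $E$ under consideration is captured in the limit.
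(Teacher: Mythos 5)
Your steps 1--3 (Brunn's theorem applied to the body $C$, the equality case forcing $K\cap E(s)=B+sp$, and $B=-B$) are sound; this is exactly the content of the Zalgaller proposition that the paper invokes, and your concluding topological argument in step 5 is a clean alternative to the paper's derivative computation --- \emph{provided} $p=0$. The gap is in step 4, and it is not a technicality. First, note that the hypothesis you are trying to contradict actually \emph{forces} $p=c$ when $c\neq 0$: since $K\not\subseteq L\mathbb{D}$ (because $\lambda<\area{\fL_K}$) and both $K$ and $L\mathbb{D}$ are centrally symmetric, $B=K\cap L\mathbb{D}$ contains a pair of antipodal points $z,-z\in\partial(L\mathbb{D})$; the inclusions $z+s(p-c),\,-z+s(p-c)\in L\mathbb{D}$ then give $p=c$. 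So the configuration you must exclude is precisely $p=c\neq 0$, in which $K\cap E(s)=(K\cap L\mathbb{D})+sc$ translates rigidly with the ellipse; this includes ``strip'' configurations $K\cap L\mathbb{D}=S\cap L\mathbb{D}$ with $S$ a slab. Your non-mixing argument (via the fact that a strictly convex, $C^1$ curve meets each outer normal direction exactly once, hence cannot contain a nontrivial translate of one of its own arcs) does kill such configurations for strictly convex $K$. But your mixing case is \emph{not} ``impossible for strictly convex $\partial K$'': a strictly convex curve can perfectly well contain an arc of an ellipse, so the assertion that a $\partial K$-arc of $\partial A$ cannot lie on $\partial(2p-E)$ has no justification as stated. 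Second, the reduction to strictly convex $K$ by approximation does not work: the set of MI ellipses depends only upper semicontinuously on the body, so MI ellipses $E_n$ of strictly convex approximants $K_n$ converge to \emph{some} MI ellipse of $K$, not to the prescribed non-centered one $E$ --- indeed, if the lemma is true, all the $E_n$ are centered and can only converge to a centered ellipse. You flag this yourself, but it is exactly the point at which the argument breaks.

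For comparison, the paper follows the same first stage (via Zalgaller) and then uses the antipodal-point observation above to conclude $K\cap(E_0+v)=(K\cap E_0)+v$ for all $v$ in the segment $[-v_0,v_0]$; overlapping these translates shows $K\cap E_0=S\cap E_0$ for a strip $S$, so that $\partial K$ is transverse to $\partial E_0$ with exactly four crossings at $(\pm\sqrt{1-b^2},\pm b)$, and the first-derivative formula \eqref{e.1st} gives $I_K'(0)=\sin 2\xi_1>0$, contradicting maximality. To repair your proof you would need either to handle the mixing case directly for general centrally symmetric $K$ (which in effect means excluding the strip configuration, e.g.\ by the paper's derivative argument or some substitute), or to find a perturbation scheme that preserves the given displaced MI ellipse --- neither of which is supplied.
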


\subsection{Quasiconcavity of the area function}

\cref{t.unique} follows from a sharper result.
In order to state it, let us introduce some notation.

Given $\lambda>0$, let $\cC^2_\lambda$ be the set of centrally symmetric bodies $K \in \cC^2$ that satisfy
$\area{\fJ_K} < \lambda < \area{\fL_K}$. 
Note that $\cC^2_\lambda$ is an open subset of $\cC^2$.
Consider the family of ellipses of area $\pi$ whose axes are the $x$ and $y$ axes (together with the unit disk), which we parameterize  by $t \in \R$ as follows:
\begin{equation}\label{e.standard_family}
E_t \coloneqq \big\{ (x,y) \in \R^2 \st e^{t} x^2 + e^{-t}y^2 \le 1 \big\}.
\end{equation}
For any $K \in \cC^2$, its \emph{intersection function} is the function $I_K \colon \R \to \R$ defined by:
\begin{equation}\label{e.I_def}
I_K (t) \coloneqq \area{E_t \cap K} \, .
\end{equation}

Recall that a real function $f$ defined on an interval $J \subseteq \R$ is called \emph{quasiconcave} if for every $s$, $t$, $u \in J$,
$$
t_0 < t_1 < t_2 \ \Rightarrow \ f(t_1) \ge \min\{ f(t_0), f(t_2)\} \, ,
$$
and is called \emph{strictly quasiconcave} if the inequality on the right is always strict.

Our crucial technical result, whose proof occupies the bulk of this paper, is the following:

\begin{theorem}\label{t.qc}
For every $K \in \cC^2_\pi$, the associated intersection function $I_K$ is strictly quasiconcave.
\end{theorem}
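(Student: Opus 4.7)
The plan is to prove strict quasiconcavity by showing, under suitable smoothness hypotheses, that every critical point of $I_K$ is a strict local maximum. Since $I_K$ is continuous and tends to zero as $|t|\to\infty$ (because $E_t$ collapses onto a diameter in either limit, forcing its intersection with $K$ to have vanishing area), having only strict local maxima as critical points would force $I_K$ to possess a unique global maximum and to be strictly monotone on each side of it --- hence strictly quasiconcave. The first step would be to reduce to the case where $\partial K$ is $C^\infty$ with strictly positive curvature, and where $K$ is in \emph{general position} with respect to the family $\{E_t\}$, meaning that $\partial K$ meets $\partial E_t$ transversally for all but finitely many $t$, with only simple tangencies at the exceptional parameters. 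The density of such approximants inside $\cC^2_\pi$ is standard.

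The next task is to derive tractable formulas for $I_K'$ and $I_K''$ in this smooth, general-position case. Parametrising $\partial E_t$ by $\gamma(\phi,t)=(e^{-t/2}\cos\phi,\, e^{t/2}\sin\phi)$, a direct computation shows that the outward-normal velocity times the arc-length element simplifies remarkably to $v_n\,\dd s = -\tfrac12\cos(2\phi)\,\dd\phi$. The moving-domain formula therefore gives
\[
I_K'(t) \;=\; \int_{\partial E_t \cap K} v_n\,\dd s \;=\; -\tfrac12\int_{A(t)}\cos(2\phi)\,\dd\phi,
\qquad
A(t) \coloneqq \{\phi\in[0,2\pi) : \gamma(\phi,t)\in K\}.
\]
In general position, $A(t)=\bigsqcup_j [a_j(t), b_j(t)]$ is a finite union of arcs varying smoothly in $t$ (and invariant under $\phi\mapsto\phi+\pi$ by central symmetry), so $I_K''(t) = -\tfrac12\sum_j[\cos(2b_j)b_j'(t)-\cos(2a_j)a_j'(t)]$; implicit differentiation of $\gamma(\phi_j(t),t)\in\partial K$ yields an explicit expression for each $\phi_j'$ in terms of the geometry at the intersection.

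The heart of the proof is to show that at a critical point $t^*$, where the first-order condition reads $\sum_j[\sin(2b_j)-\sin(2a_j)]=0$, one has $I_K''(t^*)<0$. After substituting for $\phi_j'$, the expression for $I_K''(t^*)$ decomposes into a ``normal'' piece --- a sum of terms $\cos^2(2\phi_j)$ weighted by a factor depending on the angle $\alpha_j$ between the outward normals to $\partial E_t$ and $\partial K$ at the $j$-th intersection --- plus a ``tangential'' piece involving $\pm\sin(4\phi_j)$. The strategy is to use central symmetry to pair up antipodal intersection points, exploit the first-order condition to control the tangential piece, and extract a strict negative bound from the normal piece. Crucially, the resulting estimate must be quantitative and uniform in the smoothing parameter, so that strict quasiconcavity survives the limit $K_n\to K$ and holds for arbitrary $K\in\cC^2_\pi$.

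The main obstacle is this critical-point analysis. A naive sign count fails: even for centrally symmetric $K$, the angle $\alpha_j$ between outward normals at a transverse intersection is not constrained to be acute, so $\cos\alpha_j$ and $\cos(2\phi_j)$ can each take either sign, and the individual contributions to $I_K''(t^*)$ are not uniformly signed. Identifying the correct rearrangement --- one that simultaneously exploits central symmetry, the first-order condition, and strict convexity of $K$ --- is the technical core of the argument, and is precisely what the abstract calls ``obtaining uniform estimates for certain derivatives of $I_K$''.
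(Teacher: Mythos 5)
Your outline reproduces the paper's strategy almost exactly (smoothing and general position, polar/Leibniz computation of $I_K'$ and $I_K''$, negativity of $I_K''$ at critical points, uniform quantitative estimates surviving the limit), but it stops precisely at the step that constitutes the actual proof, and you say so yourself. What is missing is not a ``rearrangement'' to be found by inspection but two concrete inputs. First, a correction: with the paper's normalization the ``normal piece'' \emph{is} uniformly signed --- at a crossing $\zeta_j=e^{i\xi_j}$ with crossing angle $\alpha_j\in(0,\pi/2)$ it contributes $(1+\cos 4\xi_j)/\tan\alpha_j\ge 0$ to $-I_K''(0)$; the danger is only that this term can be arbitrarily small (crossings near $xy=\pm 1/2$, or $\alpha_j$ near $\pi/2$) while the tangential piece $(-1)^j\sin 4\xi_j$ is negative. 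The decisive idea is to eliminate the angles altogether: convexity plus central symmetry force $\alpha_{2i-1},\alpha_{2i}\le\tfrac12\omega_i$, where $\omega_i$ is the angular length of the arc of $S^1$ between the $i$-th pair of consecutive crossings. Substituting this bound and rewriting in the variables $\sigma_i=\xi_{2i}+\xi_{2i-1}$, $\omega_i=\xi_{2i}-\xi_{2i-1}$ yields a lower bound for $-I_K''(0)$ depending only on crossing \emph{positions}, namely $\sum_i\bigl[\sin\omega_i\sin^2\sigma_i+(\cot\tfrac12\omega_i-\sin\omega_i)\cos^2\sigma_i\bigr]$, in which at most one summand (the one with the largest $\omega_i$) can be negative.

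Second, even this bound is not always positive, and the remaining work is a case analysis in which one shows that whenever the bound fails to be positive, $|I_K'(0)|$ is instead bounded below --- the correct uniform statement is $\max\{|I_K'(0)|,-I_K''(0)\}>\delta(\epsilon)$ under $\epsilon\le|K\cap E_0|\le\min\{\pi,|K|\}-\epsilon$, not merely ``$I_K''<0$ at critical points''. This requires geometric a priori bounds (the largest gap $\omega_1$ is bounded away from $0$ and $\pi$, the second largest away from $\pi/2$) which are derived from the area hypotheses by explicit constructions with inscribed disks and tangent lines; none of this appears in your proposal. You also gloss over the tangency parameters: at those finitely many $t$ the function $I_K$ is only $C^1$, and one must rule out tangencies on the hyperbolas $xy=\pm 1/2$ for even that to hold, which is why the genericity statement has to be formulated in the space of contact elements $\R^2\times\P^1$ rather than just for the curves. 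As written, the proposal is a correct plan with the theorem's core left as a conjecture.
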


If \cref{l.discard} and \cref{t.qc} are assumed, we can immediately deduce our uniqueness result:

\begin{proof}[Proof of \cref{t.unique}]
Let $K \in \cC^2$.
Since the John and Loewner ellipses are known to be unique and centrally symmetric, it is sufficient to consider $\area{\fJ_K} <  \lambda < \area{\fL_K}$. Applying an homothecy if necessary, we can assume that $\lambda = \pi$, so $K \in \cC^2_\pi$.
As remarked before, $K$ admits at least one MI ellipse of area $\pi$.
Suppose there are two, say $E \neq E'$.
By \cref{l.discard}, these ellipses must be centrally symmetric.
Applying an appropriate element of $\SL(2,\R)$ (i.e., a linear map of determinant $1$), we can assume that $E$ and $E'$ are elements of the family \eqref{e.standard_family}.
So the intersection function $I_K$ attains its maximum at two distinct points, contradicting \cref{t.qc}. 
This proves uniqueness of the MI ellipse of area $\pi$.
\end{proof}

\begin{remark}
The space $\cH$ of centrally symmetric ellipses of area $\pi$ has a natural affine structure, which is in fact equivalent to the affine structure of the hyperbolic plane: see \cite{footprint}. As a reformulation of \cref{t.qc}, for every $K \in \cC^2_\pi$, the function $E \in \cH \mapsto \area{E \cap K}$ is strictly quasiconcave with respect to this affine structure.  
\end{remark}

In fact we will prove a more general version of \cref{t.qc}: see \cref{t.extension} below.

\subsection{Maximum intersection position}

Let us say that a centrally symmetric convex body $K \subset \R^d$ is in \emph{maximum intersection (MI) position} if the euclidian unit ball in $\R^d$ is a MI ellipsoid for $K$.
Every centrally symmetric convex body can be put in MI position by applying an appropriate invertible linear map, whose determinant can be any prescribed non-zero number.

In \cref{s.MI_position} we will give a simple characterization of MI position in the plane under a transversality hypothesis, which has the following interesting consequence:

\begin{proposition}\label{p.48}
Let $K \subset \R^2$ is a compact convex centrally symmetric set whose boundary $\partial K$ is transverse to the unit circle $S^1$. 
Then:
\begin{enumerate}
\item\label{i.4} 
if the intersection consists of $4$ points then $K$ cannot be in MI position;
\item\label{i.8} 
if the intersection consists of $8$ points then $K$ is in MI position if and only if $\partial K \cap S^1$ is invariant by a quarter turn (i.e., rotation of $\pi/2$).
\end{enumerate}
\end{proposition}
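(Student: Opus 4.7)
The plan is to reduce being in MI position at the unit disk $D$ to a first-order variational condition on $I_K$, viewed as a function on the $2$-dimensional affine space $\cH$ of centrally symmetric ellipses of area~$\pi$ (see the Remark following \cref{t.qc}). By that remark, $I_K$ is strictly quasiconcave on $\cH$ whenever $K \in \cC^2_\pi$, so on this space critical points coincide with global maxima.

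\emph{First variation.} I would parametrize ellipses near $D$ by symmetric traceless matrices $S = \bigl(\begin{smallmatrix} a & b \\ b & -a \end{smallmatrix}\bigr)$ via $E_{tS} := \{x \in \R^2 \st x^{\top} e^{tS} x \le 1\}$. The transversality hypothesis ensures that $t \mapsto \area{E_{tS} \cap K}$ is differentiable at $0$, and a standard moving-boundary computation---the outward normal velocity at $x \in S^1$ is $V \cdot n = -\tfrac12\, x^{\top} S x$, while $d\ell = d\theta$ under $x = (\cos\theta, \sin\theta)$---yields
\[
\tfrac{d}{dt}\big|_{t=0}\area{E_{tS} \cap K}
= -\tfrac12 \int_{S^1 \cap \mathrm{int}(K)} (a\cos 2\theta + b\sin 2\theta)\,d\theta.
\]
Thus $D$ is a critical point of $E \mapsto \area{E\cap K}$ on $\cH$ if and only if the Fourier condition
\[
(\star)\qquad \int_{S^1 \cap \mathrm{int}(K)} e^{2i\theta}\,d\theta = 0
\]
holds. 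Strict quasiconcavity, together with a short case analysis using \cref{t.unique} to exclude $\area{\fJ_K} \ge \pi$ and $\area{\fL_K} \le \pi$ (both impossible when $D$ is MI and $\partial K$ crosses $S^1$ transversally), then shows that $K$ is in MI position iff $(\star)$ holds.

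\emph{Combinatorial analysis.} Writing the crossings as $\{\pm e^{i\alpha_j}\}$ with the $\alpha_j$ strictly increasing in $[0,\pi)$, the set $S^1 \cap \mathrm{int}(K)$ is a union of antipodal arcs on which one integrates $e^{2i\theta}$ explicitly. In case (a), $(\star)$ reduces to $e^{2i\alpha_1} = e^{2i\alpha_2}$, which is impossible for $0 \le \alpha_1 < \alpha_2 < \pi$, so $K$ cannot be in MI position. In case (b), $(\star)$ becomes $e^{2i\alpha_1} + e^{2i\alpha_3} = e^{2i\alpha_2} + e^{2i\alpha_4}$, which asserts that the four distinct points $e^{2i\alpha_j} \in S^1$ form a parallelogram; being inscribed in a circle, it must be a rectangle centered at the origin, forcing $\alpha_3 = \alpha_1 + \pi/2$ and $\alpha_4 = \alpha_2 + \pi/2$---precisely the quarter-turn invariance of $\partial K \cap S^1$. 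The converse follows at once from the change of variables $\theta \mapsto \theta + \pi/2$ in $(\star)$.

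The combinatorial step is elementary Euclidean geometry. The main obstacle is justifying the equivalence MI $\Leftrightarrow (\star)$: one must confirm that strict quasiconcavity applies, i.e., rule out via \cref{t.unique} the degenerate cases $\area{\fJ_K} \ge \pi$ or $\area{\fL_K} \le \pi$, in which the MI ellipse would be forced to lie inside or to contain $K$, contradicting the transversal crossings of $\partial K$ with $S^1$.
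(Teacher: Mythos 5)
Your reduction to the criticality condition $(\star)$ and your combinatorial analysis of the squared crossing points reproduce, in essence, the paper's \cref{t.sum_squares} and its corollary: the moving-boundary first-variation formula is an equivalent route to \eqref{e.1st}, condition $(\star)$ is exactly condition \eqref{e.two_sums}, and your parallelogram-inscribed-in-a-circle argument matches the paper's midpoint argument. Part~(\ref{i.4}) and the ``only if'' half of part~(\ref{i.8}) are therefore sound, since the necessity of $(\star)$ at a maximizer needs no quasiconcavity at all.

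The gap is in the converse implication $(\star)\Rightarrow{}$MI, which is exactly what the ``if'' half of part~(\ref{i.8}) requires. There are two problems. First, a critical point of a strictly quasiconcave function need not be a global maximum ($x\mapsto x^3$ is strictly quasiconcave with a critical point at $0$): quasiconcavity upgrades a \emph{local} maximum to the global one, but to know that the unit disk is a local maximum you need the second-order information of \cref{p.key}, namely that $I'(0)=0$ forces $I''(0)<0$ (for every rotation of $K$). Your proposal never invokes the second derivative. Second, to apply strict quasiconcavity at all you must first know $K\in\cC^2_\pi$, and your proposed exclusion of $\area{\fJ_K}\ge\pi$ and $\area{\fL_K}\le\pi$ is circular in this direction: you rule these cases out ``when $D$ is MI,'' but $D$ being MI is precisely the conclusion you are after. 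Transversality alone does not exclude, say, $\area{\fJ_K}\ge\pi$: a large body can cross $S^1$ transversally at eight points and still contain some area-$\pi$ ellipse (just not the disk itself). The paper closes this by a separate argument: if $K$ contained an area-$\pi$ ellipse, one may take it to be $E_{t_*}$ with $t_*>0$, so $I_K(t_*)=\pi>I_K(0)$ while $0$ is a strict local maximum; hence $I_K$ would have an interior local minimum on $(0,t_*)$, contradicting \cref{t.extension}. You need to supply both of these steps for your converse to go through.
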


\cref{f.quarterturn} shows an example of situation (\ref{i.8}).

\begin{figure}[hbt]
	\begin{center}
		\includegraphics[width=.65\textwidth]{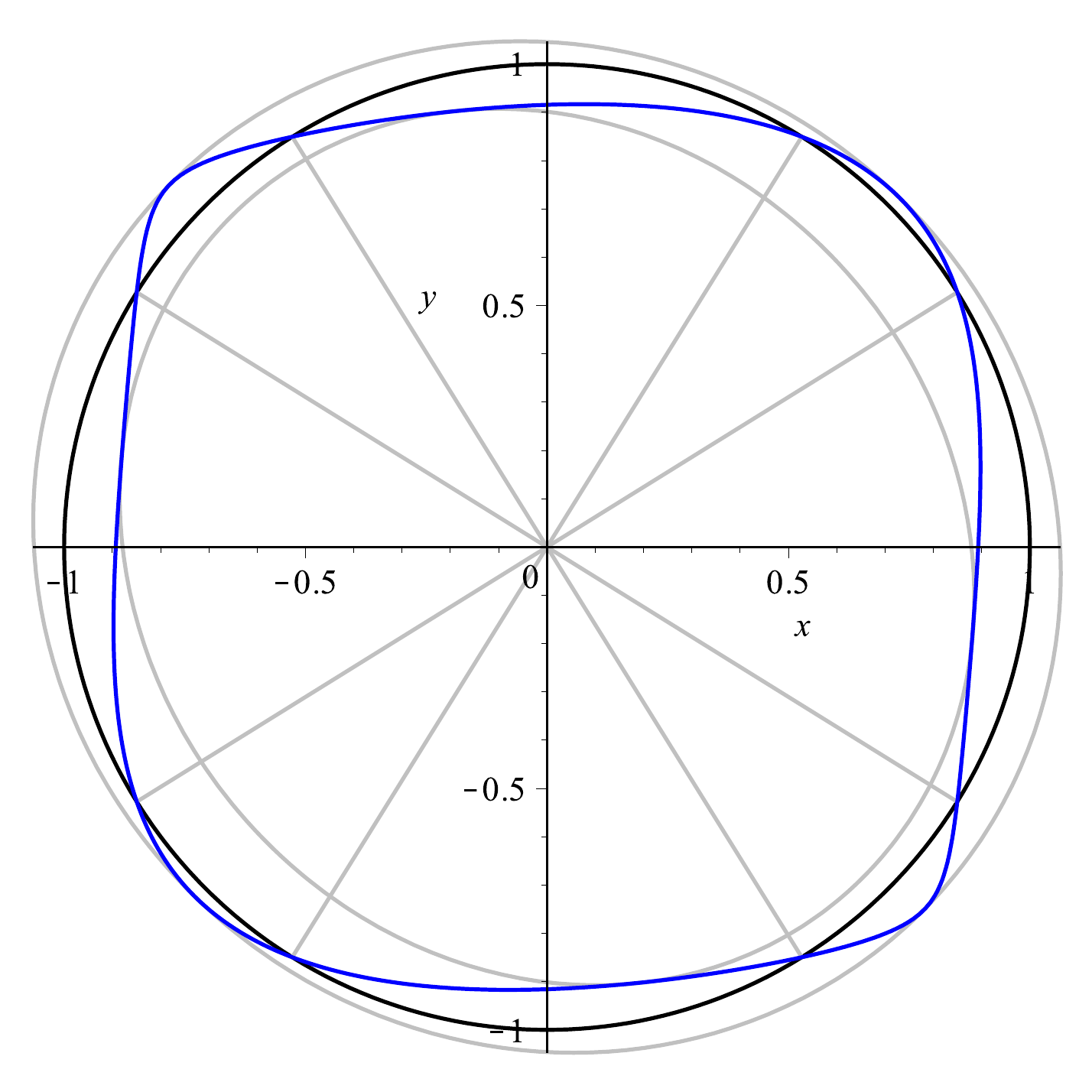}
	\end{center}
	\caption{Example of a centrally symmetric body $K$ in MI position; the unit circle and the John and Loewner ellipses of $K$ are also pictured. The curve $\partial K$ satisfies the equation $1.355 x^2-0.58xy+1.005 y^2 -0.1264 x^4 + 0.58 x^3 y - 1.041 x^2 y^2 + 0.58 x y^3+ 0.2236 y^4=1$. Apart from being centrally symmetric, $K$ has no other linear symmetries.}
	\label{f.quarterturn}
\end{figure}

In \cref{s.MI_position} we also discuss the classical characterization of the John position (that is, the situation when the John ellipsoid is round) and how it relates to MI position.

\subsection{Strategy of the proofs and organization of the paper}

The proof of \cref{t.qc} occupies \cref{s.derivatives,s.implication}. In order to make this proof more digestible, let us highlight the key ideas. 
We perform a local study of the function $I_K(t) = \area{K \cap E_t}$. It is essentially sufficient to consider a neighborhood of $t=0$. 
We initially assume that the curve $\partial K$ is smooth and crosses the unit circle $S^1 = \partial E_0$ at finitely many points, making nonzero angles. This transversality condition implies that the function $I_K$ is of class $C^2$ on a neighborhood of $0$; furthermore, there are explicit formulas for the first two derivatives of $I_K$ at $0$, with $I_K'(0)$ depending on the locations of the crossings between $\partial K$ and $S^1$, and $I_K''(0)$ depending also on the crossing angles: see \cref{p.formulas}.
Another observation (\cref{p.tangency}) is that we can allow certain types of ``tame'' tangencies between $\partial K$ and $S^1$ and the function $I_K(t)$ will still be $C^1$ on a neighborhood of $t=0$, though $I''(0)$ may fail to exist.
Then we reach the heart of the whole proof, \cref{p.key}, which essentially says that if $\partial K$ is transverse to $S^1$ and $I_K'(0) = 0$ then $I_K$ is strictly concave around $0$, that is, $I_K''(0)<0$. 
The proof of this key \lcnamecref{p.key} relies on a lower bound \eqref{e.gatito} for $-I_K''(0)$ which, like the formula for $I_K'(0)$, depends only on the locations and not on the angles of the crossings between $\partial K$ and $S^1$. 
A quick inspection of this bound reveals that it has a strong tendency to be positive: for example, if each pair of consecutive crossings is separated by a circle arc of length $< \pi/2$ then the bound is automatically positive. 
The actual proof of \cref{p.key} is done by a case-by-case analysis, which occupies \cref{ss.key_proof}.
All estimates are explicit and ultimately we obtain a positive lower bound for $\max\{ |I_K'(0)|, - I_K''(0) \}$ that does not depend on $K$, but only on the areas of $K$ and $K \cap E_0$. This uniformity with respect to $K$ is crucial for the second part of the proof of \cref{t.qc}, presented in \cref{s.implication}. 
There, we argue that any convex body $K \in \cC^2_\pi$ admits small perturbations $\tilde K$ with respect to the symmetric difference metric that have the same area as $K$ and are ``regular'' in the following sense: the boundary $\partial \tilde K$ is smooth and transverse to all ellipses $\partial E_t$, except for a finite number of ``tame'' tangencies. Using the estimates obtained previously, we conclude that the resulting function $I_{\tilde K}$ is strictly quasiconcave in a quantitative sense that is independent of the size of the perturbation. This uniformity allows us to take a limit and conclude that $I_K$ is strictly quasiconcave as well, therefore proving \cref{t.qc}.

The paper has three additional short sections.
In \cref{s.discard} we prove \cref{l.discard} and therefore conclude the proof of \cref{t.unique}.
In \cref{s.MI_position} we study MI positions. 
These two sections may be read independently from the previous ones, except that we use 
\cref{p.formulas} and \cref{t.extension}.
Finally, \cref{s.questions} discusses possible extensions of our results.

\section{Derivatives of the intersection function under regularity assumptions}
\label{s.derivatives}

\subsection{Differentiability of the intersection function}

Consider a pair of Jordan curves $\Gamma_1$, $\Gamma_2$ in the plane. 
A point $p$ of intersection between the curves is called:
\begin{itemize} 
	\item a \emph{crossing} if each curve admits a $C^1$ parameterization at a neighborhood of $p$, and the pair of tangent vectors at $p$ is linearly independent;
	\item a \emph{quadratic tangency} if each curve admits a $C^2$ parameterization at a neighborhood of $p$, and these parametrized curves have a first- but not a second-order contact at $p$.
\end{itemize}
We say that the  curves $\Gamma_1$, $\Gamma_2$ are:
\begin{itemize}
	\item \emph{transverse} if every point of intersection is a crossing;
	\item \emph{quasitransverse} if every point of intersection is either a crossing or a quadratic tangency.
\end{itemize}
In either case, the number of intersections is finite.

\medskip

Now consider a centrally symmetric convex body $K \subset \R^2$ whose boundary $\partial K$ is transverse to the unit circle $\partial E_0 = S^1$. Then the two curves cross at $4n$ points. If $K \in \cC^2_\pi$ then necessarily $n\ge 1$.
We list the crossing points in counterclockwise order as $\zeta_1$, \dots, $\zeta_{4n}$. 
Since $K$ is centrally symmetric, we have $\zeta_{j+2n} = - \zeta_j$.
Shifting indices by $1$ (mod~$4n$) if necessary, we assume that the following condition holds: if the curve $\partial K$ is traversed counterclockwise, then it exits the unit disk $E_0$ at the points $\zeta_j$ with $j$ even,
and enters it at the points $\zeta_j$ with $j$ odd: see \cref{f.notation}.
Let $\alpha_j > 0$ denote the non-oriented angles of intersection; note that $\alpha_j < \pi/2$ since $K$ is centrally symmetric.
Fix numbers $\xi_1 < \xi_2 < \dots < \xi_{4n} < \xi_1+2\pi$ such that $\zeta_j = (\cos \xi_j, \sin \xi_j)$.

\begin{center}
	\begin{figure}[hbt]
	\includegraphics[width=.65\textwidth]{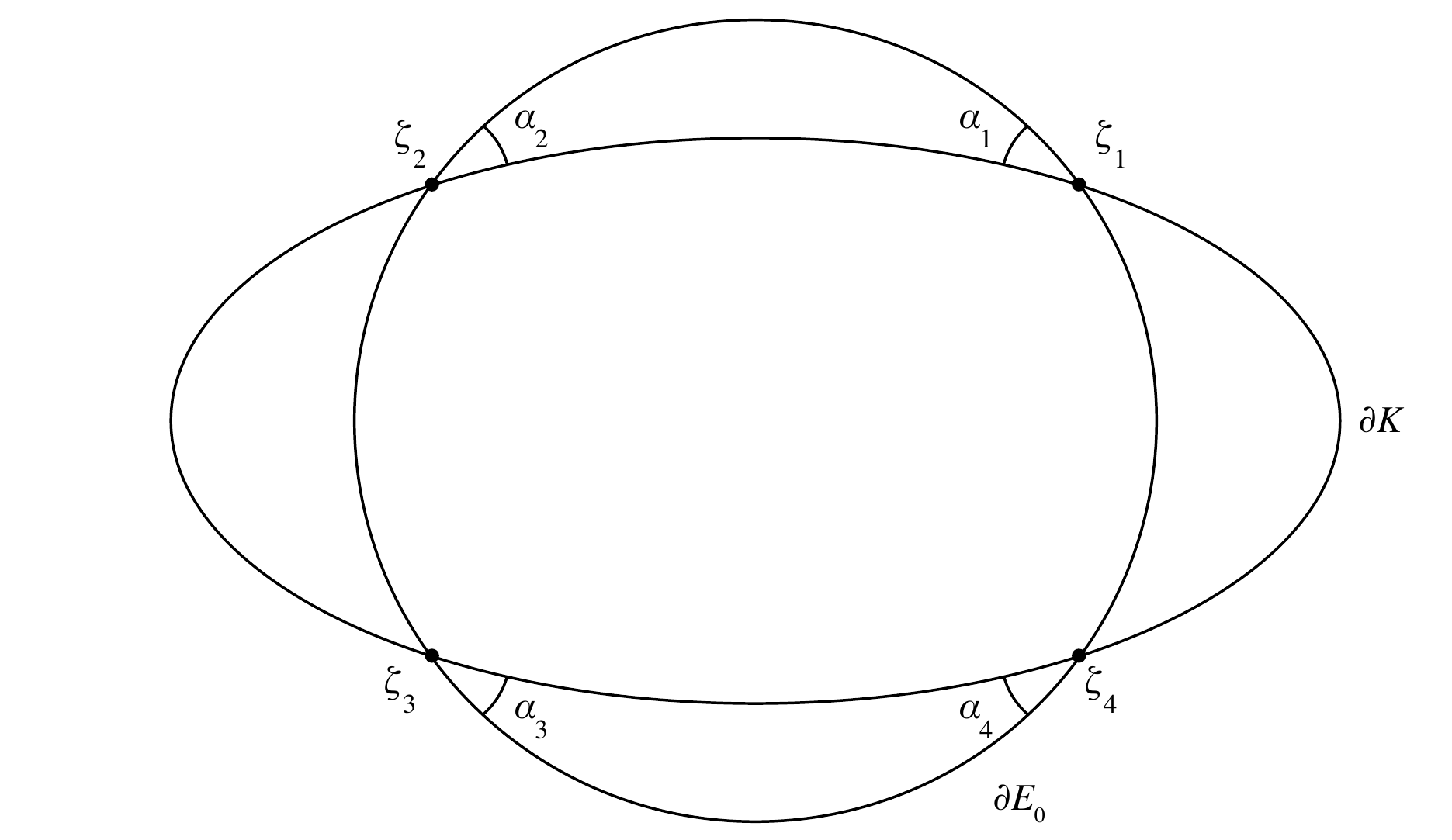}
	\caption{Intersections between the curve $\partial K$ and the circle $\partial E_0 = S^1$.}\label{f.notation}
	\end{figure}
\end{center}

\begin{proposition}\label{p.formulas}
Let $K \subset \R^2$ be a centrally symmetric convex body whose boundary $\partial K$ is transverse to the unit circle $\partial E_0$ and intersects it at $4n > 0$ points. 
Let $(\xi_j)$ and $(\alpha_j)$ be the crossing positions and angles as defined above.
Then the intersection function $I=I_K$ is $C^2$ at a neighborhood of $0$ and
\begin{align}
  I'(0) &= \frac{1}{2} \sum_{j=1}^{2n} (-1)^j \sin 2\xi_j \, , \label{e.1st} \\
-I''(0) &= \frac{1}{4} \sum_{j=1}^{2n} \left[ (-1)^j \sin 4\xi_j
+ \frac{1 + \cos 4\xi_j}{\tan \alpha_j} \right] \, . \label{e.2nd}
\end{align}
\end{proposition}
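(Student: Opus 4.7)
The natural strategy is to pass to polar coordinates centered at the origin, which lies in the interior of every $E_t$ and of $K$. Writing $\rho_t(\theta) = (e^t\cos^2\theta + e^{-t}\sin^2\theta)^{-1/2}$ for the radial function of $E_t$ and $\rho_K$ for that of $K$, one has $I_K(t) = \tfrac{1}{2}\int_0^{2\pi}\min(\rho_t,\rho_K)^2\,\mathrm{d}\theta$. Transversality at $t=0$ and the implicit function theorem produce $C^1$ functions $\xi_j(t)$ with $\xi_j(0)=\xi_j$ solving $\rho_t(\xi_j(t)) = \rho_K(\xi_j(t))$; the open set $A_t = \{\rho_t<\rho_K\}$ decomposes as the disjoint union $\bigcup_{i=1}^{2n}(\xi_{2i}(t),\xi_{2i+1}(t))$, and from this one reads off that $I_K$ is $C^2$ on a neighborhood of $0$.

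For the first derivative I split $I_K(t) = \tfrac{1}{2}\int_{A_t}\rho_t^2 + \tfrac{1}{2}\int_{B_t}\rho_K^2$ and differentiate via Leibniz. The two boundary-motion contributions cancel exactly because $\rho_t = \rho_K$ on $\partial A_t$, leaving $I_K'(t) = \int_{A_t}\rho_t\,\partial_t\rho_t\,\mathrm{d}\theta$. Since $\rho_0\equiv 1$ and $\partial_t\rho_t|_{t=0} = -\tfrac{1}{2}\cos 2\theta$, this becomes $I_K'(0) = -\tfrac{1}{2}\int_{A_0}\cos 2\theta\,\mathrm{d}\theta$. Telescoping over the $2n$ arcs $(\xi_{2i},\xi_{2i+1})$ and collapsing the $4n$ boundary terms to $2n$ via the central symmetry $\xi_{j+2n}=\xi_j+\pi$ yields \eqref{e.1st}.

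For the second derivative I apply Leibniz once more to the expression for $I_K'(t)$. This produces an interior term $\int_{A_t}\partial_t(\rho_t\partial_t\rho_t)\,\mathrm{d}\theta$ and a boundary term $\sum_j (-1)^{j+1}(\rho_t\partial_t\rho_t)(\xi_j)\,\dot\xi_j$. A direct calculation using $\partial_t^2\rho_t|_{t=0} = \tfrac{3}{4}\cos^2 2\theta - \tfrac{1}{2}$ collapses the interior integrand at $t=0$ to $\tfrac{1}{2}\cos 4\theta$, and the same telescoping/symmetry argument then yields the $(-1)^j\sin 4\xi_j$ contribution to $-I_K''(0)$. For the boundary term, implicitly differentiating $\rho_t(\xi_j(t)) = \rho_K(\xi_j(t))$ at $t=0$ (using $\partial_\theta\rho_0\equiv 0$) gives $\dot\xi_j(0) = -\tfrac{1}{2}\cos 2\xi_j/\rho_K'(\xi_j)$.

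The step demanding the most care is converting $\rho_K'(\xi_j)$ into the crossing angle $\alpha_j$. Comparing the tangent vector $(\rho_K'\cos\xi_j - \sin\xi_j,\,\rho_K'\sin\xi_j + \cos\xi_j)$ of $\partial K$ at $\zeta_j$ with the circle tangent $(-\sin\xi_j,\cos\xi_j)$ via their dot and cross products gives $\tan\alpha_j = |\rho_K'(\xi_j)|$, and the indexing convention (odd-indexed $\zeta_j$ are entries, even-indexed exits of $\partial K$) fixes the sign as $\rho_K'(\xi_j) = (-1)^j \tan\alpha_j$. Substituting this into the boundary sum, applying $\cos^2 2\xi_j = \tfrac{1}{2}(1+\cos 4\xi_j)$, and again invoking central symmetry, the boundary contribution assembles into the $\frac{1+\cos 4\xi_j}{\tan\alpha_j}$ piece of \eqref{e.2nd}. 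The combined interior and boundary contributions to $-I_K''(0)$ reproduce \eqref{e.2nd}.
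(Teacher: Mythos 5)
Your proof is correct and follows essentially the same route as the paper: polar coordinates, the implicit function theorem for the crossing angles $\xi_j(t)$, two applications of the Leibniz rule, and the identification of the radial derivative of $\partial K$ at a crossing with $\pm\tan\alpha_j$. The only cosmetic difference is that you differentiate $\tfrac12\int\min(\rho_t,\rho_K)^2$ directly, whereas the paper differentiates the symmetric difference $\tfrac12\int|\rho_K^2-\rho_t^2|$ and converts via inclusion--exclusion; the computations are term-for-term equivalent.
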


\begin{proof} 
In polar coordinates $(r,\theta)$, the ellipse $\partial E_t$ has equation:
$$
r^2 = \frac{1}{e^t \cos^2 \theta + e^{-t} \sin^2 \theta} \, .
$$
Similarly, the curve $\partial K$ is represented by some equation $r^2 = G(\theta)$, where $G$ is a positive function on the circle $\R/2\pi\Z$ which satisfies $G(\theta+\pi) = G(\theta)$.
Furthermore, $G-1$ vanishes exactly on the points $\xi_1$, \dots, $\xi_{4n}$, is $C^1$ on a neighborhood of these points, and
\begin{equation}\label{e.signs}
(-1)^j G'(\xi_j) > 0
\end{equation}
for each $j$.

Let $f(\theta,t) \coloneqq G(\theta) - 1/(e^t \cos^2 \theta + e^{-t} \sin^2 \theta)$.
By the Implicit Function Theorem, for $t$ sufficiently close to $0$, the function $f(\mathord{\cdot},t)$ vanishes on $4n$ points $\xi_1(t)$, \dots, $\xi_{4n}(t)$; moreover each function $\xi_j(\mathord{\cdot})$ is $C^1$ and satisfies $\xi_j(0) = \xi_j$ and 
\begin{equation*}
\xi_j'(t) = - \frac{f_t(\xi_j(t),t)}{f_\theta(\xi_j(t),t)} \, .
\end{equation*}
Consider the function:
$$
A(t) \coloneqq d_\mathrm{sym}(E_t,K) 
= \frac{1}{2}\int_0^{2\pi} |f(\theta,t)| \dd \theta =  \int_0^{\pi} |f(\theta,t)| \dd \theta \, .
$$
Equivalently,
\begin{equation}\label{e.L1} 
A(t) = \sum_{j=1}^{2n} (-1)^j \int_{\xi_j(t)}^{\xi_{j+1}(t)} f(\theta,t) \dd \theta  \, . 
\end{equation}
By Leibniz integral rule, 
\begin{align}
A'(t) 
&= \sum_{j=1}^{2n} (-1)^j \Biggl[ \int_{\xi_j(t)}^{\xi_{j+1}(t)} f_t(\theta,t) \dd \theta + \underbrace{f(\xi_{j+1}(t),t)}_0 \xi_{j+1}'(t) -  \underbrace{f(\xi_j(t),t)}_{0} \xi_j'(t)\Biggr] \, ,
\label{e.Leibniz1}\\
A''(t) 
&= \sum_{j=1}^{2n} (-1)^j \left[ \int_{\xi_j(t)}^{\xi_{j+1}(t)} f_{tt}(\theta,t) \dd \theta + f_t(\xi_{j+1}(t),t) \xi_{j+1}'(t) -  f_t(\xi_j(t),t) \xi_j'(t) \right] \notag \\
&= \sum_{j=1}^{2n} (-1)^j \left[  \int_{\xi_j(t)}^{\xi_{j+1}(t)} f_{tt}(\theta,t) \dd \theta- 2 f_t(\xi_j(t),t) \xi_j'(t) \right] \notag \\
&= \sum_{j=1}^{2n} (-1)^j \left[  \int_{\xi_j(t)}^{\xi_{j+1}(t)} f_{tt}(\theta,t) \dd \theta + \frac{2 [f_t(\xi_j(t),t)]^2}{f_\theta(\xi_j(t),t)} \right] \, . \label{e.Leibniz2}
\end{align}
In particular, $A$ is a $C^2$ function on a neighborhood of $0$.
Since the functions $A$ and $I$ are related by formula~\eqref{e.inclusion_exclusion}, $I$ is also $C^2$ on a neighborhood of $0$.

Now we consider $t=0$.
A computation gives:
$$
f_t(\theta,0)    = \cos 2 \theta \quad \text{and} \quad 
f_{tt}(\theta,0) = -\cos 4\theta.
$$
Plugging into \eqref{e.Leibniz1},
$$
I'(0)
= - \frac{1}{2} A'(0) 
= - \frac{1}{4} \sum_{j=1}^{2n} (-1)^j \big[ \sin 2 \xi_{j+1} - \sin 2 \xi_j \big]
= \frac{1}{2} \sum_{j=1}^{2n} (-1)^j \sin 2 \xi_j \, ,
$$
proving \eqref{e.1st}.
Analogously, from \eqref{e.Leibniz1} we obtain:
\begin{align*}
-I''(0) = \frac{1}{2} A''(0) 
&= \frac{1}{2} \sum_{j=1}^{2n} (-1)^j \left[ \frac{- \sin 4 \xi_{j+1} + \sin 4 \xi_j}{4} + \frac{2 \cos^2 2 \xi_j}{G'(\xi_j)} \right] \\
&= \frac{1}{2} \sum_{j=1}^{2n} \left[ \frac{(-1)^j  \sin 4 \xi_j}{2} + \frac{1 + \cos 4\xi_j}{|G'(\xi_j)|} \right] \, ,
\end{align*}
where in the last step we have used \eqref{e.signs}.
Since $|G'(\xi_j)| = 2 \tan \alpha_j$, we obtain formula \eqref{e.2nd}.
\end{proof}

\begin{proposition}\label{p.tangency}
Let $K \subset \R^2$ be a centrally symmetric convex body whose boundary $\partial K$ is quasitransverse to the unit circle $S^1 = \partial E_0$.
Suppose the points of tangency are not $(\pm 1/\sqrt{2}, \pm 1/\sqrt{2})$.
Then the intersection function $I_K$ is $C^1$ at a neighborhood of $0$. 
\end{proposition}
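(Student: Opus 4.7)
The plan is to carry over the polar-coordinate setup from the proof of \cref{p.formulas}: writing $\partial K$ as $r^2 = G(\theta)$ and defining $f(\theta,t) = G(\theta) - 1/(e^t\cos^2\theta + e^{-t}\sin^2\theta)$, the $C^1$ regularity of $I_K$ near $t=0$ is equivalent via \eqref{e.inclusion_exclusion} to that of $A(t) = \int_0^\pi |f(\theta,t)|\dd\theta$. Since $\partial K \cap S^1$ is finite, I would split $[0,\pi]$ into small neighborhoods of each intersection plus a complement on which $f(\cdot,0)$ is bounded away from zero. On the complement, $|f|$ is as smooth in $t$ as $f$, and at each transverse crossing the implicit-function-theorem argument used in the proof of \cref{p.formulas} already gives a $C^2$ contribution. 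So everything reduces to analyzing a neighborhood of a single quadratic tangency $\xi^*$.

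At such a tangency one has $f(\xi^*,0) = 0$, $f_\theta(\xi^*,0) = 0$, $f_{\theta\theta}(\xi^*,0) \neq 0$, and the assumption $\xi^* \notin \{\pm\pi/4, \pm 3\pi/4\}$ ensures that $f_t(\xi^*,0) = \cos 2\xi^* \neq 0$. Thus $\nabla f(\xi^*,0) \neq 0$, and the implicit function theorem (solving for $t$) exhibits the zero set of $f$ near $(\xi^*,0)$ as a $C^2$ graph $t = \phi(\theta)$ with $\phi(\xi^*) = 0$, $\phi'(\xi^*) = 0$, and $\phi''(\xi^*) = -f_{\theta\theta}(\xi^*,0)/f_t(\xi^*,0) \neq 0$. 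Hadamard's lemma then yields $f(\theta,t) = (t - \phi(\theta))\,g(\theta,t)$ with $g$ of class $C^1$ and $g(\xi^*,0) = \cos 2\xi^* \neq 0$; shrinking the neighborhood so that $g$ keeps a constant sign, the local contribution to $A$ takes the form
$$
H(t) \coloneqq \int_{\xi^* - \delta}^{\xi^* + \delta} |t - \phi(\theta)|\, h(\theta,t) \dd\theta ,
$$
where $h \coloneqq |g|$ is positive and $C^1$. The only remaining source of non-smoothness is now the factor $|t - \phi(\theta)|$.

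It remains to show that $H$ is $C^1$ on a neighborhood of $0$. Assume by symmetry that $\phi''(\xi^*) > 0$, so the equation $\phi(\theta) = t$ has two nearby solutions $u_-(t) < u_+(t)$ for $t > 0$ small and none for $t < 0$. For $t < 0$ the factor $|t - \phi|$ keeps constant sign and $H$ is $C^1$ by straightforward differentiation under the integral. For $t > 0$, I would split the integral according to the sign of $t - \phi$ and apply the Leibniz rule; the boundary terms at $u_\pm(t)$ vanish because $t - \phi(u_\pm(t)) = 0$, producing
$$
H'(t) = \int_{\xi^* - \delta}^{\xi^* + \delta} \bigl(-h + (\phi - t)\, h_t\bigr) \dd\theta + 2 \int_{u_-(t)}^{u_+(t)} \bigl(h + (t - \phi)\, h_t\bigr) \dd\theta .
$$
As $t \to 0^+$ the second integral vanishes (bounded integrand, interval length $O(\sqrt{t})$) and the first matches the expression for $H'(0^-)$, so the one-sided derivatives agree and $H \in C^1$ near $0$. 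The main technical point — and the only place where the tangency hypothesis $\cos 2\xi^* \neq 0$ is genuinely used — is the vanishing of the Leibniz boundary terms, which is what prevents a jump in $H'$ at $t = 0$. Summing contributions over all intersection points gives $A \in C^1$ near $0$, and hence $I_K \in C^1$ by \eqref{e.inclusion_exclusion}.
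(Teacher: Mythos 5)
Your proof is correct and follows essentially the same route as the paper's: both reduce to a local analysis at each quadratic tangency, use the hypothesis $\cos 2\xi^*\neq 0$ to get transversality in the $t$-direction, and kill the potential jump in the derivative via the Leibniz rule together with the $O(|t|^{1/2})$ width of the region where the sign of $f(\cdot,t)$ flips. The only difference is cosmetic: you package the local contribution via the Hadamard factorization $f=(t-\phi)g$, whereas the paper isolates an explicit correction term $C_i(t)$ supported on one side of $t=0$; the resulting estimates are identical.
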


\begin{proof} 
Assume there is at least one tangency between $\partial K$ and the unit circle $\partial E_0$, otherwise the \lcnamecref{p.tangency} follows from \cref{p.formulas}.
Fix numbers $\tau_1 < \tau_2 <\cdots < \tau_{2\ell}$ with $\tau_{i+\ell} = \tau_i+\pi$ such that the tangencies between occur at the points $(\cos \tau_i, \sin \tau_i)$. By assumption, these tangencies are quadratic and do not occur at the points $(\pm 1/\sqrt{2}, \pm 1/\sqrt{2})$.
Also fix small neighborhoods $V_i = (a_i, b_i) \ni \tau_i$. 

Define functions $G(\theta)$ and $f(\theta,t)$ as in the proof of \cref{p.formulas}, and note that $G(\theta) = 1 + f(\theta,0)$. 
These functions are continuous everywhere and are $C^2$ if $\theta$ is restricted to the set $\bigcup_i V_i$ and $t$ is close to zero.
Furthermore, for each $i$ we have $G(\tau_i)=1$, $G'(\tau_i)=0$, and $G''(\tau_i)\neq 0$; the latter inequality expresses the fact that each tangency is quadratic. 
Note also that $f_t(\tau_i, 0) \neq 0$; indeed, along the proof of \cref{p.formulas} we computed $f_t(\theta, 0) = \cos 2 \theta$, and since the tangency points are not $(\pm 1/\sqrt{2}, \pm 1/\sqrt{2})$, we have $\cos 2 \tau_i \neq 0$.

We will show that the function $A(t) \coloneqq d_\mathrm{sym}(E_t,K)$ is $C^1$ on a neighborhood of $t=0$; then it will follow from the relation \eqref{e.inclusion_exclusion} that $I_K(t)$ is also $C^1$ on a neighborhood of $t=0$.

If there are no tangencies then $A(t)$ is given by formula \eqref{e.L1}.
In order to take the tangencies into account, for each $i$ we need to add a certain correction term $C_i(t)$ to the formula.
More precisely, let 
$\varsigma_i \in \{+1,-1\}$ be the sign of $G$ on the neighborhood $V_i \ni \tau_i$, in the sense that $\varsigma_i G \ge 0$ there;
then the correction term $C_i(t)$ satisfies:
$$
\int_{a_i}^{b_i} |f(\theta,t)| \dd \theta = \varsigma_i \int_{a_i}^{b_i} f(\theta,t) \dd \theta + C_i(t) \, .
$$
Once we prove that each function $C_i$ is $C^1$ at a neighborhood of $0$, we will conclude that so are the functions $A$ and $I_K$.

For definiteness, consider the case where $G''(\tau_i)<0$ (i.e.\ $\varsigma_i=-1$) and $f_t(\tau_i, 0)>0$; the other three cases are analogous.
For each $t$ sufficiently close to zero, consider the equation $f(\theta, t) = 0$ for $\theta \in V_i$: it has no solution for $t>0$, exactly one solution $\tau_i$ for $t=0$, and exactly two solutions $\tau_i^-(t) < \tau_i^+(t)$ for $t<0$.
Then the correction term is:
$$
C_i(t) = \int_{a_i}^{b_i} \big[ |f(\theta,t)| +  f(\theta,t) \big] \dd \theta =
\begin{cases}
{\displaystyle 2\int_{\tau_i^-(t)}^{\tau_i^+(t)} f(\theta,t) \dd\theta} &\quad \text{if $t<0$}, \\
0 &\quad \text{if $t \ge 0$}.
\end{cases}
$$
For $t<0$ close to $0$, the width $\tau_i^+(t) - \tau_i^-(t)$ is $O(|t|^{1/2})$, and so $C_i(t) = O(|t|^{3/2})$.
In particular, $C_i'(0) = 0$.
Still assuming $t<0$ close to $0$, 
by Leibniz integral rule we have:
$$
\frac{1}{2} C_i'(t) 
= \int_{\tau_i^-(t)}^{\tau_i^+(t)} f_t(\theta,t) \dd \theta + \underbrace{f(\tau_i^+(t),t)}_0 \cdot (\tau_i^+)'(t) -  \underbrace{f(\tau_i^-(t),t)}_{0} \cdot (\tau_i^-)'(t) \, ,
$$
which tends to $0$ as $t \nearrow 0$.
Hence $C_i$ is a function of class $C^1$, as we wanted to show.
\end{proof}

The proof also shows that formula \eqref{e.1st} still holds in the situation of \cref{p.tangency}, but we will not use this fact.

\subsection{The key proposition}

\begin{proposition}\label{p.key}
For every $\epsilon > 0$ there exists $\delta > 0$ with the following properties.
Suppose that $K \subset \R^2$ is a centrally symmetric convex body whose boundary $\partial K$ is transverse to the unit circle $\partial E_0$, and
\begin{equation}\label{e.key_hypothesis}
\epsilon \le \area{K \cap E_0} \le \min\{ \pi, \area{K} \} - \epsilon \, .
\end{equation}
Then:
\begin{equation}\label{e.key_conclusion}
\max \big\{ |I_K'(0)| , -I_K''(0) \big\} > \delta \, ,
\end{equation}
\end{proposition}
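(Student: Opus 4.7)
The plan is to extract from \eqref{e.2nd} an ``angle\nobreakdash-independent'' lower bound and then argue by contradiction and compactness. Since every $\alpha_j \in (0,\pi/2)$ and $1+\cos 4\xi_j \ge 0$, each summand $(1+\cos 4\xi_j)/\tan \alpha_j$ is non-negative, so
$$
-I_K''(0) \;\ge\; \frac{1}{4}\sum_{j=1}^{2n} (-1)^j \sin 4\xi_j,
$$
a quantity depending only on the crossing positions. Together with \eqref{e.1st}, this reduces the proposition to the purely combinatorial statement that, when the $\xi_j$'s obey the alternation and central-symmetry constraints from above \cref{p.formulas} and are realised by some body $K$ with $\area{K\cap E_0}$ inside \eqref{e.key_hypothesis}, the two sums $\sum(-1)^j\sin 2\xi_j$ and $\sum(-1)^j\sin 4\xi_j$ cannot both be arbitrarily close to $0$.

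Next, I would suppose the conclusion fails: for some $\epsilon>0$ there is a sequence $K_m$ satisfying the hypotheses with $|I'_{K_m}(0)|$ and $-I''_{K_m}(0)$ both tending to~$0$. By the Blaschke selection theorem, after passing to a subsequence $K_m$ converges in the Hausdorff metric to a centrally symmetric convex body $K_*$ still obeying~\eqref{e.key_hypothesis}, and the crossing configurations $\{\xi_j^{(m)}\}$ converge as well---although distinct crossings may coalesce in the limit into tangencies. A crucial a priori restriction on this limit: since $-I''_{K_m}(0)$ stays bounded while each $(1+\cos 4\xi_j^{(m)})/\tan \alpha_j^{(m)}$ is non\nobreakdash-negative, any crossing whose angle $\alpha_j^{(m)}\to 0$ (i.e.\ every nascent tangency) must be accompanied by $\xi_j^{(m)}$ approaching a diagonal point $(\pm 1/\sqrt 2,\pm 1/\sqrt 2)$, where the weight $1+\cos 4\xi_j$ vanishes. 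This places tangency formation at exactly the positions excluded by \cref{p.tangency}.

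The heart of the proof is then a case analysis on the combinatorial type of the limiting configuration. As the introduction hints, in the favorable sub-case in which every arc between consecutive crossings has length strictly below $\pi/2$, a direct manipulation (pairing the terms $\xi_{2k-1},\xi_{2k}$ and using product-to-sum identities together with the alternation pattern) shows that the angle-independent bound above is strictly positive, yielding the required contradiction. In the complementary sub-case some arc between consecutive crossings has length $\ge \pi/2$; combined with the central symmetry $\xi_{j+2n}=\xi_j+\pi$ this forces a large portion of $\partial K_*$ to lie on one side of $S^1$, and, together with $I'_{K_*}(0) = 0$ and the diagonal\nobreakdash-tangency restriction just noted, contradicts the area bound $\epsilon\le \area{K_*\cap E_0}\le \min\{\pi,\area{K_*}\}-\epsilon$. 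I expect the main obstacle to be making this case analysis uniform in the number of crossings $n$ (which is not a priori bounded by $\epsilon$) and keeping the estimates explicit enough to extract an effective $\delta=\delta(\epsilon)$, which \cref{s.implication} will require.
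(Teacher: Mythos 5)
There is a genuine gap, and it occurs at the very first step: the ``angle-independent'' bound obtained by discarding the non-negative terms $(1+\cos 4\xi_j)/\tan\alpha_j$ from \eqref{e.2nd} is too weak, and the combinatorial statement you reduce to is false. With $n=1$, write $\omega=\xi_2-\xi_1$ and $\sigma=\xi_1+\xi_2$; then $\sum_j(-1)^j\sin 2\xi_j=2\sin\omega\cos\sigma$ and $\sum_j(-1)^j\sin 4\xi_j=2\sin 2\omega\cos 2\sigma$. Take $K$ to be an ellipse of area $\pi$ with axes along the diagonals $y=\pm x$: the four crossings lie near $(\pm1,0)$ and $(0,\pm1)$, one checks $\cos\sigma=0$, hence $I_K'(0)=0$ exactly, while $\cos 2\sigma=-1$ gives $\sum_j(-1)^j\sin4\xi_j=-2\sin 2\omega<0$. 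So both of your two sums are $\le 0$, your lower bound for $-I_K''(0)$ is vacuous, and no contradiction can be extracted from positions alone; yet such $K$ satisfy \eqref{e.key_hypothesis} and the conclusion \eqref{e.key_conclusion} must hold for them (indeed here $t=0$ is the maximum of $I_K$, so one needs $-I_K''(0)>\delta$). The idea you are missing is that the angle terms must be kept and bounded \emph{below} rather than thrown away: convexity of $K$ forces $\max\{\alpha_{2i-1},\alpha_{2i}\}\le\tfrac12\omega_i$, where $\omega_i$ is the arc between consecutive crossings (the paper's \cref{l.angle_bound}), whence $1/\tan\alpha_j\ge\cot\tfrac12\omega_i$ and one obtains the position-only bound \eqref{e.new_2nd}, $-I''(0)\ge\sum_i\bigl[\sin\omega_i\sin^2\sigma_i+(\cot\tfrac12\omega_i-\sin\omega_i)\cos^2\sigma_i\bigr]$, which in the example above yields $-I''(0)\ge\sin\omega>0$. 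This extra $\cot\tfrac12\omega_i$ term is what carries the entire case analysis.

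The compactness scheme in your second step also does not close. Your deduction that a nascent tangency ($\alpha_j^{(m)}\to0$) must drift to a diagonal point requires that the individual terms $(1+\cos4\xi_j^{(m)})/\tan\alpha_j^{(m)}$ stay bounded; from $-I''_{K_m}(0)\to0$ this only follows if $\bigl|\sum_j(-1)^j\sin4\xi_j^{(m)}\bigr|$ is bounded, which in turn requires a bound on the number of crossings $n_m$ --- precisely the uniformity you defer to the end. Moreover the Hausdorff limit $K_*$ need not meet $S^1$ transversally or even quasitransversally (boundary arcs may collapse onto the circle), and $I'$, $I''$ do not pass to such limits. The paper avoids limits entirely: \cref{l.restr} converts the area hypothesis \eqref{e.key_hypothesis} into explicit bounds $\kappa<\omega_1<\pi-\kappa$ and $\omega_s<\tfrac{\pi}{2}-\kappa$ on the largest two arcs, and the case analysis then produces an explicit $\delta(\epsilon)$ valid for all $n$ simultaneously, because all but one of the terms $g(\omega_i,\sigma_i)$ in the lower bound are automatically non-negative.
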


The proof of the \cref{p.key} occupies the rest of this \lcnamecref{s.derivatives}.
Fix the convex body $K$ as above, and write $I = I_K$.

\subsection{Geometric inequalities}\label{ss.geometric}

Let us establish some preliminary inequalities.

It is convenient to reparameterize the sequence $(\xi_j)$ differently.
For each $i \in \{1,\dots,n\}$, let:
\begin{equation}\label{e.new_parameters}
\sigma_i \coloneqq \xi_{2i} + \xi_{2i-1} \, , \quad
\omega_i \coloneqq \xi_{2i} - \xi_{2i-1} \, . 
\end{equation}
So $\sigma_{i+n} = \sigma_i + 2\pi$, $\omega_{i+n} = \omega_i$, and $0 < \omega_i < \pi$.

\begin{lemma}\label{l.angle_bound}
For each $i$ we have 
$\max\{\alpha_{2i-1},\alpha_{2i}\} \le \tfrac{1}{2} \omega_i$. 
\end{lemma}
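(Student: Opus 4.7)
The plan is to use two classical ingredients: convexity of $K$, which forces the body to lie in a half-plane cut off by any of its tangent lines, and the tangent--chord angle version of the inscribed angle theorem, which says that a chord of $S^1$ emanating from a point $P \in S^1$ at angle $\alpha$ to the circle's tangent subtends an arc of length $2\alpha$ on its near side. The aim is to apply these at $\zeta_{2i-1}$ to get $\omega_i \ge 2\alpha_{2i-1}$, and then symmetrically at $\zeta_{2i}$ to get $\omega_i \ge 2\alpha_{2i}$.

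To carry this out, first rotate so that $\xi_{2i-1}=0$ and therefore $\zeta_{2i-1}=(1,0)$ and $\zeta_{2i}=(\cos\omega_i,\sin\omega_i)$. Write $\alpha:=\alpha_{2i-1}\in(0,\pi/2)$. Since $\partial K$ enters $E_0$ at $\zeta_{2i-1}$ when traversed counterclockwise, the counterclockwise unit tangent to $\partial K$ at $\zeta_{2i-1}$ must point into the disk and form the (unsigned) angle $\alpha$ with $\tau_S=(0,1)$; the only possibility consistent with $\alpha<\pi/2$ is $\tau_K=(-\sin\alpha,\cos\alpha)$. The tangent line to $\partial K$ at $\zeta_{2i-1}$ is then $\{x\cos\alpha+y\sin\alpha=\cos\alpha\}$, and since $K$ lies to the left of $\tau_K$ it is contained in the half-plane $\{x\cos\alpha+y\sin\alpha\le\cos\alpha\}$. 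A direct computation identifies the second intersection of this line with $S^1$ as the point $(\cos 2\alpha,\sin 2\alpha)$, confirming the tangent--chord angle picture in coordinates, and shows that the open arc $\{(\cos\theta,\sin\theta):\theta\in(0,2\alpha)\}$ lies in the opposite (open) half-plane $\{x\cos\alpha+y\sin\alpha>\cos\alpha\}$, hence is disjoint from $K$. Because $\zeta_{2i}\in K\cap S^1$ and $\omega_i\in(0,\pi)$, this forces $\omega_i\ge 2\alpha$, i.e.\ $\alpha_{2i-1}\le\omega_i/2$. The same argument, now placing $\zeta_{2i}$ at $(1,0)$ and using that $\partial K$ exits $E_0$ so that the counterclockwise tangent points out of the disk with $\tau_K=(\sin\alpha_{2i},\cos\alpha_{2i})$, together with $\zeta_{2i-1}\in K$, yields $\omega_i\ge 2\alpha_{2i}$.

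I do not expect a substantive obstacle; everything reduces to a half-plane containment plus the identification of the second intersection of the tangent with $S^1$. The only points that require care are the orientation choices: confirming that the hypothesis ``$\partial K$ enters $E_0$ at $\zeta_{2i-1}$'' together with the standing bound $\alpha_j<\pi/2$ (from central symmetry) pins down $\tau_K$ unambiguously, and checking that the half-plane containing $K$ is the one that excludes the short arc rather than the long one. Both are handled by a one-line coordinate computation once coordinates are fixed as above.
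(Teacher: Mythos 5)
Your proof is correct and is essentially the same argument as the paper's, which simply points to a figure: the tangent--chord angle fact combined with the half-plane cut off by the support line of $K$ at the crossing point. You have merely written out in coordinates the picture the paper leaves to the reader, and the orientation checks (entering vs.\ exiting, and which half-plane contains $K$) are handled correctly.
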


\begin{proof}
\cref{f.angle_bound} shows how to bound $\alpha_{2i}$.
The bound for $\alpha_{2i-1}$ is analogous. 
\end{proof}

\begin{center}
	\begin{figure}[hbt]
	\includegraphics[width=.4\textwidth]{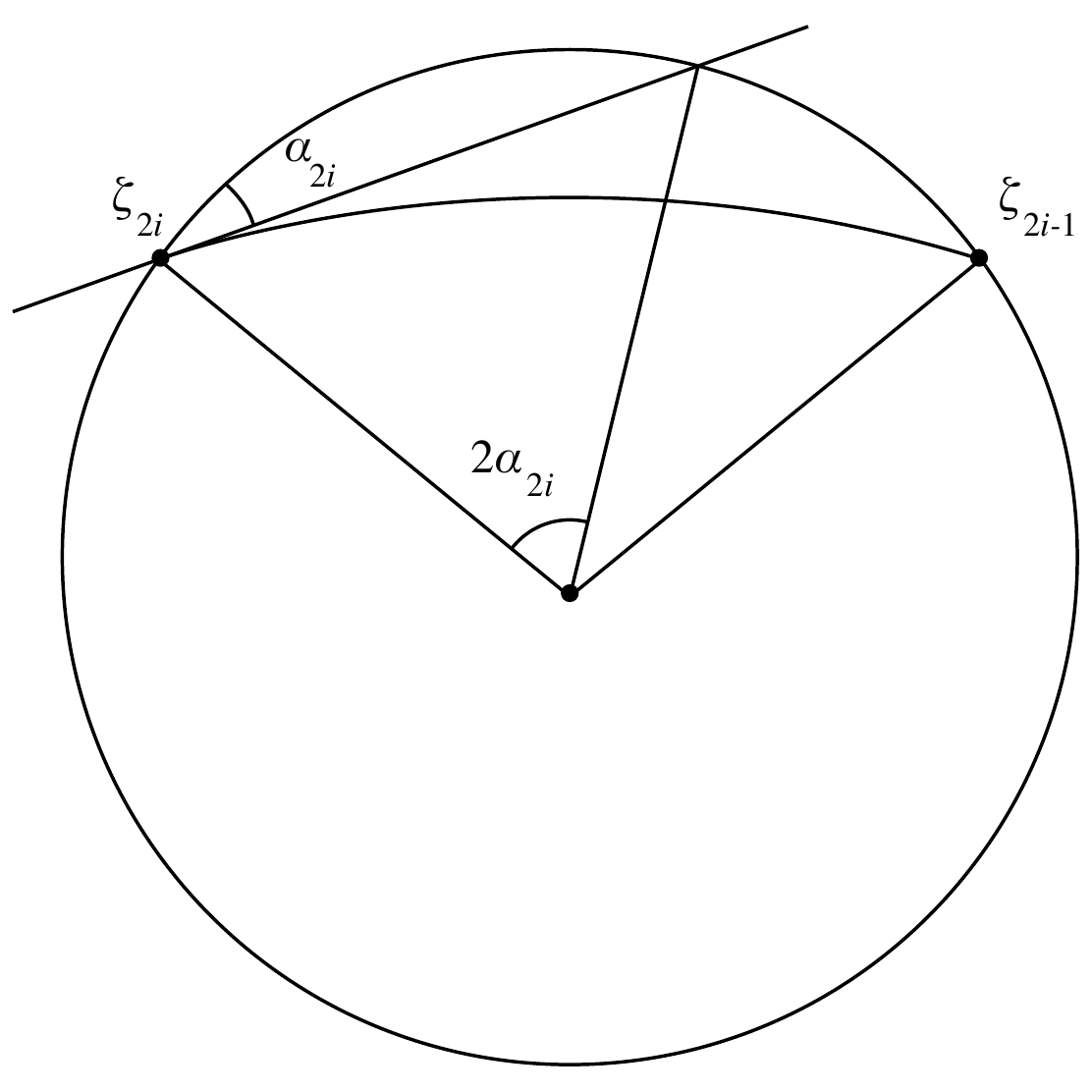}
	\caption{Proof of \cref{l.angle_bound}.}\label{f.angle_bound}
	\end{figure}
\end{center}

Next, we want some bounds on the parameters $\omega_i$.
Shifting indices if necessary, 
we assume that
$$
\omega_1 = \max\{ \omega_1, \omega_2, \dots, \omega_n\}.
$$
If $n \ge 2$, we fix $s \in \{2,\dots,n\}$ such that $\omega_s = \max\{\omega_2, \dots, \omega_n\}$.
Note that $\sum_{i=1}^n \omega_i < \pi$ and in particular 
\begin{equation}\label{e.omega_s}
\omega_s < \pi - \omega_1 \le \tfrac{\pi}{2} \, .
\end{equation}

The following \lcnamecref{l.restr} uses hypothesis \eqref{e.key_hypothesis} from \cref{p.key}, namely that $I(0) = \area{K \cap E_0}$ is not too close to $0$ nor to $\min\{\pi,\area{K}\}$.

\begin{lemma}\label{l.restr}
For every $\epsilon>0$ there exists $\kappa>0$, 
not depending on $K$, such that if 
$\epsilon \le I(0) \le \min\{\pi,\area{K}\} - \epsilon$ then:
\begin{equation}\label{e.restr1}
\kappa < \omega_1 < \pi - \kappa 
\end{equation}
and, if $n \ge 2$, 
\begin{equation}\label{e.restr2}
\omega_s < \frac{\pi}{2} - \kappa \, .
\end{equation}
\end{lemma}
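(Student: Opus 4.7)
My plan is to extract from the hypothesis $\epsilon\le I(0)\le \min\{\pi,\area K\}-\epsilon$ the three \emph{simultaneous} area inequalities $\area{K\cap E_0}\ge\epsilon$, $\area{E_0\setminus K}\ge\epsilon$, and $\area{K\setminus E_0}\ge\epsilon$ (the upper bound on $I(0)$ is equivalent to both $\pi-I(0)\ge\epsilon$ and $\area K-I(0)\ge\epsilon$), and use them one at a time.

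\textbf{Lower bound $\omega_1>\kappa$.} Each connected component of $E_0\setminus K$ is a ``crescent'' between an outside arc of $S^1$ (of length $\omega_i$) and the arc of $\partial K$ inside $E_0$ joining its endpoints. Since $K$ is convex, the chord $[\zeta_{2i-1},\zeta_{2i}]$ lies inside $K$, so the crescent is contained in the circular segment of area $\tfrac12(\omega_i-\sin\omega_i)$. Summing and applying $\omega-\sin\omega\le\omega^3/6$, $\omega_i\le\omega_1$, $\sum_i\omega_i\le\pi$ yields $\area{E_0\setminus K}\le\pi\omega_1^2/6$; combined with $\area{E_0\setminus K}\ge\epsilon$ this gives $\omega_1\ge\sqrt{6\epsilon/\pi}$.

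\textbf{Upper bound $\omega_1<\pi-\kappa$.} I would argue by contradiction: suppose $\omega_1\ge\pi-\kappa$ with $\kappa$ small. After a rotation the chord $[\zeta_1,\zeta_2]$ is horizontal at height $y=\sin(\kappa/2)$, and by central symmetry its mirror chord is at $y=-\sin(\kappa/2)$. Let $\Sigma:=E_0\cap\{|y|\le\sin(\kappa/2)\}$ be the middle strip (area $\alpha:=\kappa+\sin\kappa$) and $S:=\tfrac12(\omega_1-\sin\omega_1)$ the area of each of the two circular segments of $E_0$ outside $\Sigma$. The ``top cap'' of $K$ (bounded above by the arc of $\partial K$ inside $E_0$) lies inside the top segment of $E_0$; write $\delta$ for its area. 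Splitting by strip versus caps, and denoting by $a,b,c$ the areas of $\Sigma\cap(K\cap E_0)$, $\Sigma\cap(E_0\setminus K)$, $\Sigma\cap(K\setminus E_0)$, one has $I(0)=a+2\delta$, $\pi-I(0)=b+2(S-\delta)$, $\area{K\setminus E_0}=c$, and $a+b=\alpha$. If $\alpha\ge\epsilon$ we are done directly; otherwise $I(0)\ge\epsilon$ forces $\delta\ge(\epsilon-\alpha)/2$. The key step uses convexity of $K$: its half-width $w(y):=\sup\{x:(x,y)\in K\}$ is concave and even, with $w(\pm\sin(\kappa/2))=\cos(\kappa/2)$; set $\rho:=w(0)$. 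The inequality $c\ge\epsilon$ yields a \emph{lower} bound $\rho-\cos(\kappa/2)\ge\epsilon/(4\sin(\kappa/2))$. The concave extrapolation of $w$ into the top cap region, integrated until it vanishes, combined with $\delta\ge(\epsilon-\alpha)/2$, yields an \emph{upper} bound $\rho-\cos(\kappa/2)\le 2\sin(\kappa/2)\cos^2(\kappa/2)/(\epsilon-\alpha)$. Multiplying the two opposing bounds produces $(\epsilon-\alpha)\epsilon\le 2\sin^2\kappa$, which for sufficiently small $\kappa$ (relative to $\epsilon$) is violated---contradiction.

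\textbf{Upper bound $\omega_s<\pi/2-\kappa$ and main obstacle.} The same scheme adapts to the case $n\ge 2$: assuming $\omega_s\ge\pi/2-\kappa$, the relation $\omega_1+\omega_s\le\pi$ pins $\omega_1\in[\pi/2-\kappa,\pi/2+\kappa]$, so the chords of $\omega_1,\omega_s$ (and their centrally symmetric counterparts) sit at distance $\approx\sqrt{2}/2$ from the origin and form approximately an inscribed square in $E_0$. Running the decomposition--inequality--convexity argument relative to this square-based geometry (with four ``corner caps'' of $K$ in place of two) yields the analogous quantitative bound. The technical heart of the whole proof is the convex-analytic ``sandwich'' producing $(\epsilon-\alpha)\epsilon\lesssim\sin^2\kappa$ by playing $c\ge\epsilon$ against $\delta\ge(\epsilon-\alpha)/2$ through the concave width function $w$; the $\omega_s$ case is the most delicate because of the extra bookkeeping of multiple chords and corner caps.
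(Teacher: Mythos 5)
Your lower bound $\omega_1>\kappa$ is correct and is essentially the paper's argument (the paper phrases it as ``$K$ contains the disk of radius $\cos\tfrac12\omega_1$ centered at the origin,'' which is the same containment of the crescents of $E_0\setminus K$ in circular segments). Your upper bound $\omega_1<\pi-\kappa$ takes a genuinely different route: the paper bounds the crossing angles $\alpha_1,\alpha_2$ via the areas of the regions of $E_0$ cut off by the tangent lines to $\partial K$ at $\zeta_1$ and $\zeta_{2n+2}$, and then traps $K\setminus E_0$ in a thin triangle with base $[\zeta_1,\zeta_{2n+2}]$; you instead play $\area{K\setminus E_0}\ge\epsilon$ against $\area{K\cap E_0}\ge\epsilon$ through the concave slice function $w$. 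Your sandwich does work and is self-contained, modulo two repairable slips: $w$ is concave but \emph{not} even (central symmetry gives $\sup\{x:(x,y)\in K\}=-\inf\{x:(x,-y)\in K\}$, not evenness), so the maximum of $w$ on the strip need not sit at $y=0$ and you must either run the extrapolation from the maximizing height or concede a factor of $2$ via concavity; and the chord actually sits at height $\cos\tfrac12\omega_1\le\sin\tfrac{\kappa}{2}$ rather than exactly $\sin\tfrac{\kappa}{2}$, which only helps but should be stated.

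The genuine gap is the third inequality, $\omega_s<\tfrac{\pi}{2}-\kappa$, which you do not prove: you assert that ``the same scheme adapts'' to the square-based geometry, but you never specify the decomposition, the quantity replacing $\rho-\cos\tfrac{\kappa}{2}$, or why two opposing bounds would arise. The sandwich is in fact the wrong tool here: there is no distinguished direction in which to take a single concave width function, and only one of the two area constraints is needed. What closes this case is a pure containment. If $\omega_1,\omega_s\ge\tfrac{\pi}{2}-\kappa$, then $\sum_i\omega_i<\pi$ forces all remaining arcs and all gaps between consecutive crossing arcs to have total angular measure $O(\kappa)$. Each ``cap'' of $K$ beyond a chord $[\zeta_{2i-1},\zeta_{2i}]$ lies inside $E_0$, since it is bounded by that chord and by the arc of $\partial K$ joining its endpoints inside the disk (convexity guarantees $\partial K$ meets the chord's line only at those two points). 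Hence $K\setminus E_0\subseteq Q\setminus E_0$, where $Q$ is the intersection of the half-planes bounded by the chord lines and containing the origin --- a polygon within $O(\kappa)$ of a square inscribed in the unit circle, so $\area{Q\setminus E_0}\to 0$ as $\kappa\to 0$, contradicting $\area{K}-I(0)\ge\epsilon$. This is exactly the paper's argument (its ``four small regions'' picture); it is a one-step containment, not a delicate sandwich, and you need to supply it, or something equivalent, for the lemma to be proved.
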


\begin{proof}
Note that $K$ contains the disk of radius $\cos \tfrac{1}{2}\omega_1$ centered at the origin (see \cref{f.inner_disk}), and in particular $I(0) \ge \pi \cos^2 \tfrac{1}{2}\omega_1$. 
By assumption, $I(0) \le \pi - \epsilon$, and so $\omega_1$ cannot be too small, 
proving the first inequality in \eqref{e.restr1}.

\medskip

Now consider \eqref{e.restr2}: if this inequality does not hold then, by \eqref{e.omega_s}, both $\omega_1$ and $\omega_s$ are approximately $\frac{\pi}{2}$. Then $K\setminus E_0$ is contained in the union of the four small regions represented in \cref{f.4_corners}. This contradicts the fact that $\area{K\setminus E_0}  = \area{K} - I(0) \ge \epsilon$ is not too small.

\begin{figure}[htb]
    \centering
    \begin{minipage}{0.4\textwidth}
        \centering
        \includegraphics[width=0.9\textwidth]{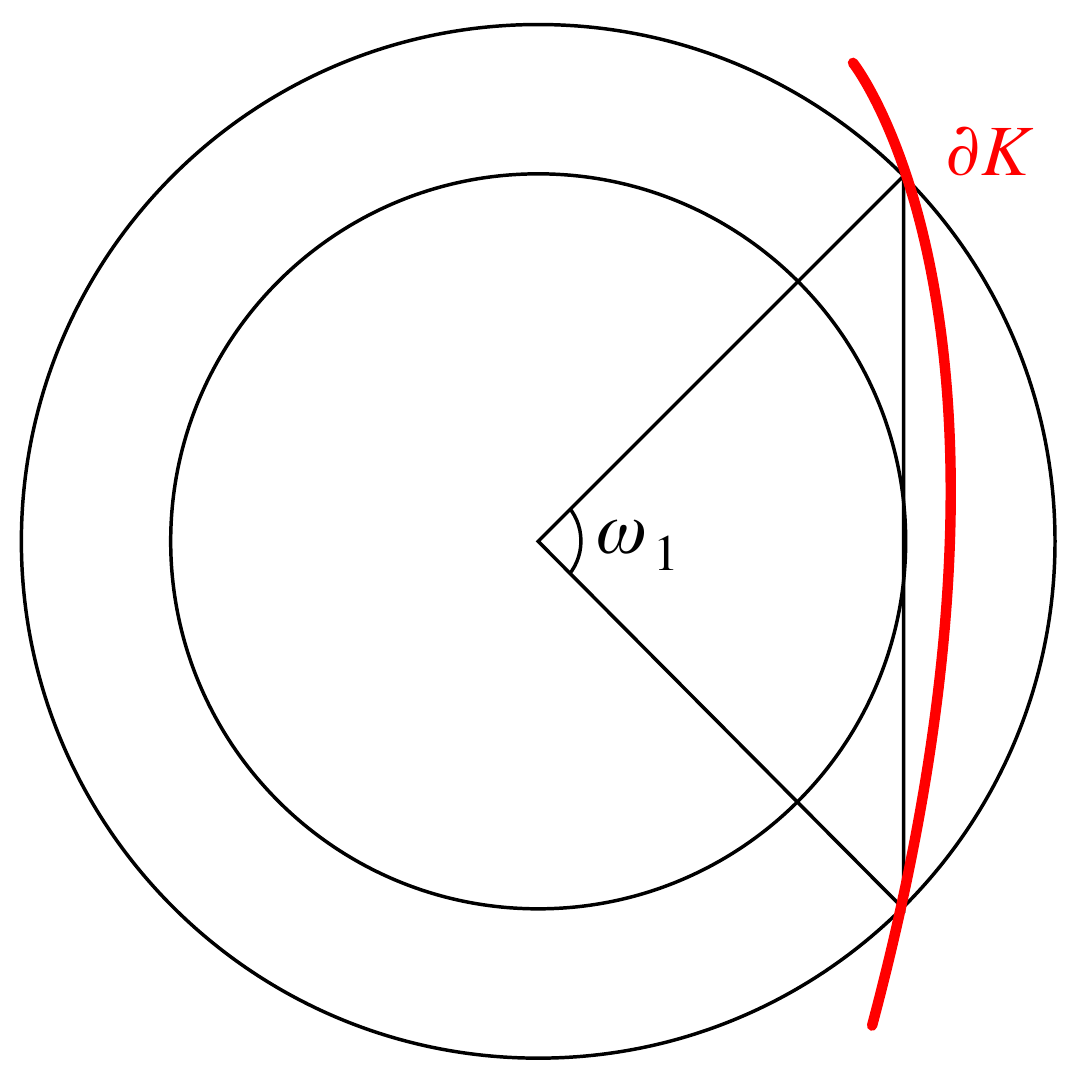}
        \caption{Proof of the first inequality in \eqref{e.restr1}.}\label{f.inner_disk}
    \end{minipage}\hfill
    \begin{minipage}{0.5\textwidth}
        \centering
        \includegraphics[width=0.9\textwidth]{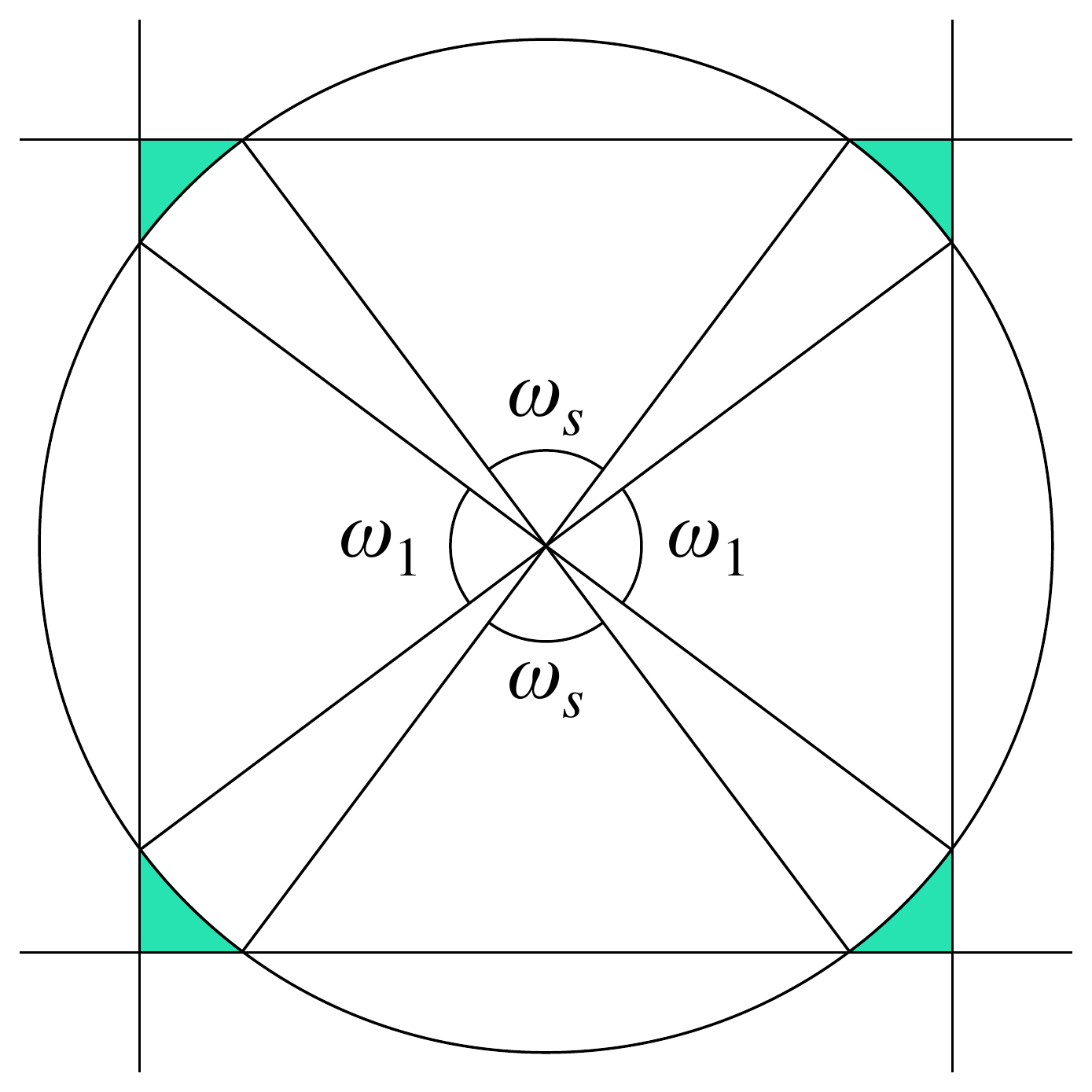}
        \caption{Proof of \eqref{e.restr2}.}\label{f.4_corners}
    \end{minipage}
\end{figure}

\medskip

The second inequality in \eqref{e.restr1} is the trickiest one. 
Let $L_1$ (resp.\ $L_2$)  be the tangent line to $\partial K$ at the point $\zeta_1$ (resp.\ $\zeta_{2n+2}$), oriented so that $K$ sits to the left of this line. 
The lines $L_1$ and $L_2$ cross the circle $\partial E_0$ forming angles $\alpha_1$ and $\alpha_{2n+2} = \alpha_2$, respectively. 
Let $R_i$ be the part of the disk $E_0$ to the right of the line $L_i$: see \cref{f.pogoball}.

\begin{center}
	\begin{figure}[hbt]
	\includegraphics[width=\textwidth]{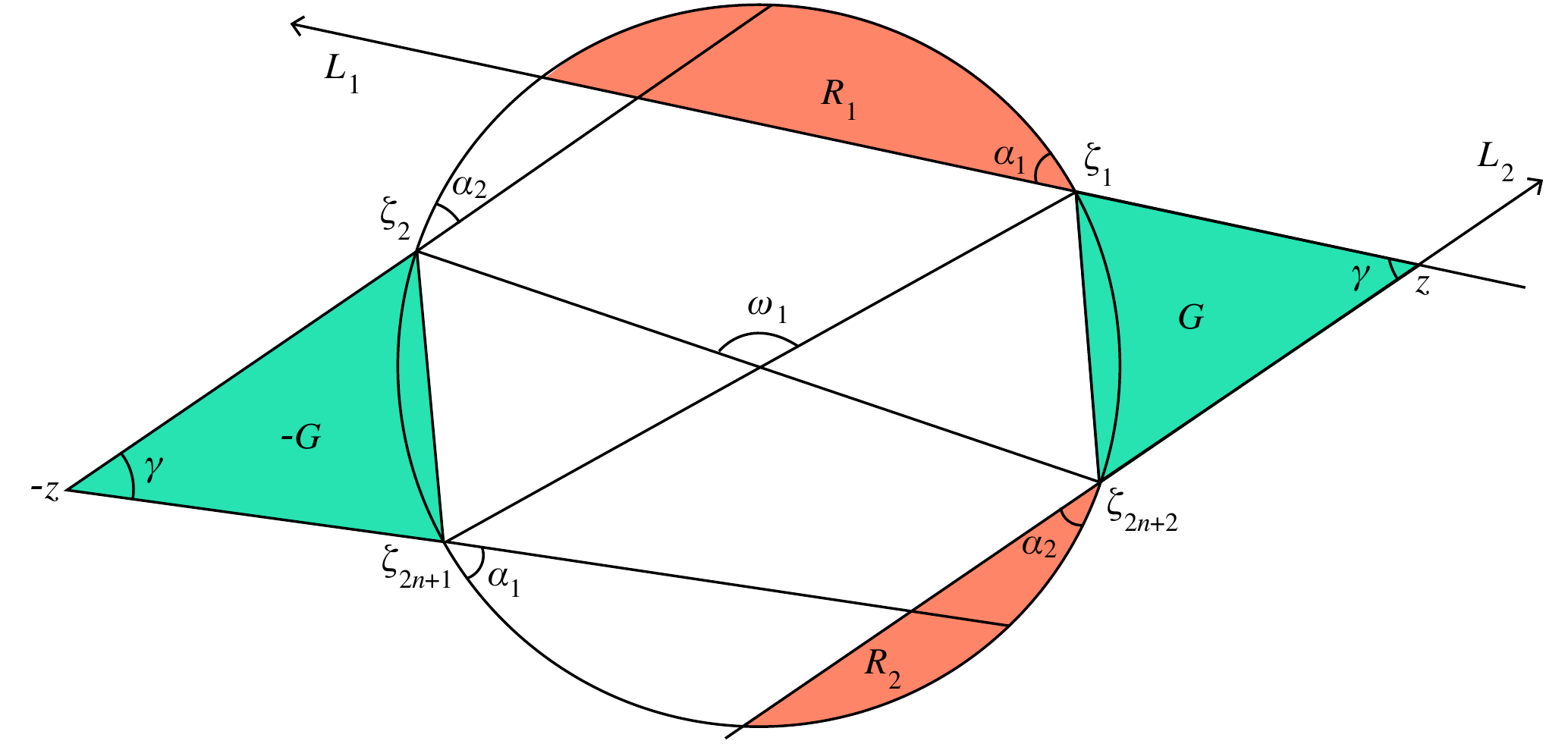}
	\caption{Proof of the second inequality in \eqref{e.restr1}.}\label{f.pogoball}
	\end{figure}
\end{center}

The regions $R_1$ and $R_2$ are disjoint and their interiors are contained in $E_0 \setminus K$.
In particular,
$$
\area{R_1} + \area{R_2} \le \area{E_0 \setminus K}  = \pi - I(0) \le \pi - \epsilon.
$$
So the areas $\area{R_1}$ and $\area{R_2}$ cannot be both too close to $\tfrac{\pi}{2}$.
On the other hand, these areas are related to the crossing angles $\alpha_i$ as follows:
$$
\area{R_i} = \alpha_i - \tfrac{1}{2}\sin 2\alpha_i \, .
$$
Therefore the angles $\alpha_1$ and $\alpha_2$ cannot be both too close to $\tfrac{\pi}{2}$.
By \cref{l.angle_bound}, we have $\max\{\alpha_1,\alpha_2\} \le \tfrac{1}{2} \omega_1$, and in particular the quantity $\gamma \coloneq \omega_1 - \alpha_1 - \alpha_2$ is nonnegative.
If $\gamma$ is zero or small then $\omega_1$ is not too close to $\pi$, as desired. 
So assume from now on that $\gamma$ is not too close to $0$.
Then the lines $L_1$ and $L_2$ cannot be parallel; indeed they cross forming angle $\gamma$ at some point $z$.
Recall that the centrally symmetric convex body $K$ sits to the left of each oriented line $L_1$ and $L_2$; furthermore, the arc of the counterclockwise-oriented Jordan curve $\partial K$ from $\zeta_1$ to $\zeta_2$ is contained in the disk $E_0$.
It follows from these observations that $K \subseteq E_0 \cup G \cup (-G)$, where 
$G$ is the (filled) triangle with vertices $\zeta_1$, $\zeta_{2n+2}$, $z$.
In particular,
$$
2\area{G} = \area{G \cup (-G)} \ge \area{K \setminus E_0} = \pi - I(0) \ge \epsilon \, ,
$$
Note that the triangle $G$ has a side $[\zeta_1, \zeta_{2n+2}]$ of length $\ell \coloneq 2 \cos \tfrac{1}{2} \omega_1$, and therefore its area cannot exceed the area of an isosceles triangle with angle $\gamma$ and opposite side $\ell$, that is, 
\begin{equation*}
\area{G} \le \tfrac{1}{4} \ell^2 \cot\tfrac{\gamma}{2}  \, .
\end{equation*}
Since $\gamma$ and $\area{G} \ge \tfrac{\epsilon}{2}$ are bounded away from $0$, so is $\ell$.
It follows that $\omega_1$ cannot be too close to $\pi$.
This completes the proof of the second inequality in \eqref{e.restr1} and of the \lcnamecref{l.restr}.
\end{proof}

\subsection{More manipulation of the derivatives}

We will now come back to the formulas obtained in \cref{p.formulas} and rewrite them in terms of the new parameters \eqref{e.new_parameters}; we will also use \cref{l.angle_bound} to obtain a convenient lower bound for minus the second derivative.

\begin{lemma}\label{l.new}
\begin{align}
I'(0)  	&= \sum_{i=1}^{n}\sin \omega_i \cos \sigma_i \, , \label{e.new_1st} \\
-I''(0) &\ge \sum_{i=1}^{n} \left[ \sin \omega_i \sin^2 \sigma_i + (\cot \tfrac{1}{2}\omega_i - \sin \omega_i) \cos^2 \sigma_i \right] \, . \label{e.new_2nd}
\end{align}
\end{lemma}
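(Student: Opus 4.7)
The plan is a direct trigonometric rewriting of \cref{p.formulas} using the substitution \eqref{e.new_parameters} and an elementary monotonicity argument supplied by \cref{l.angle_bound}.

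\textbf{First derivative.} I would pair the consecutive terms $j=2i-1$ and $j=2i$ in \eqref{e.1st}: since $(-1)^{2i-1}=-1$ and $(-1)^{2i}=+1$, each pair contributes
\[
\tfrac{1}{2}\big(\sin 2\xi_{2i}-\sin 2\xi_{2i-1}\big)
=\tfrac{1}{2}\cdot 2\cos(\xi_{2i}+\xi_{2i-1})\sin(\xi_{2i}-\xi_{2i-1})
=\sin\omega_i\cos\sigma_i,
\]
by the sum-to-product identity. Summing over $i=1,\dots,n$ yields \eqref{e.new_1st} on the nose.

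\textbf{Second derivative.} I would split the bracket in \eqref{e.2nd} into its two summands. The same pairing applied to $(-1)^j\sin 4\xi_j$ gives
\[
\tfrac{1}{4}\big(\sin 4\xi_{2i}-\sin 4\xi_{2i-1}\big)
=\tfrac{1}{2}\cos 2\sigma_i\sin 2\omega_i
=\sin\omega_i\cos\omega_i\cos 2\sigma_i.
\]
For the angle-dependent part, I invoke \cref{l.angle_bound}: since $\alpha_{2i-1},\alpha_{2i}\le\tfrac{1}{2}\omega_i$ and $1+\cos 4\xi_j\ge 0$, monotonicity of $\tan$ on $(0,\pi/2)$ gives $(1+\cos 4\xi_j)/\tan\alpha_j\ge(1+\cos 4\xi_j)\cot\tfrac{1}{2}\omega_i$ for $j\in\{2i-1,2i\}$. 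Applying sum-to-product once more, $\cos 4\xi_{2i-1}+\cos 4\xi_{2i}=2\cos 2\sigma_i\cos 2\omega_i$, so the contribution of the $i$-th pair is bounded below by
\[
\tfrac{1}{4}\cot\tfrac{1}{2}\omega_i\big(2+2\cos 2\sigma_i\cos 2\omega_i\big)
=\tfrac{1}{2}\cot\tfrac{1}{2}\omega_i\big(1+\cos 2\sigma_i\cos 2\omega_i\big).
\]
Adding these two contributions gives
\[
-I''(0)\;\ge\;\sum_{i=1}^{n}\left[\sin\omega_i\cos\omega_i\cos 2\sigma_i+\tfrac{1}{2}\cot\tfrac{1}{2}\omega_i\big(1+\cos 2\sigma_i\cos 2\omega_i\big)\right].
\]

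\textbf{Matching the target.} It remains to verify that the right-hand side above coincides with that of \eqref{e.new_2nd}. I would write both expressions in the basis $\{1,\cos 2\sigma_i\}$ using $\sin^2\sigma_i=(1-\cos 2\sigma_i)/2$ and $\cos^2\sigma_i=(1+\cos 2\sigma_i)/2$, together with the half-angle identity $\cot\tfrac{1}{2}\omega_i=(1+\cos\omega_i)/\sin\omega_i$ and the doubling formulas $\cos 2\omega_i=2\cos^2\omega_i-1=1-2\sin^2\omega_i$, $\sin 2\omega_i=2\sin\omega_i\cos\omega_i$. The constant terms in both expressions reduce to $\tfrac{1}{2}\cot\tfrac{1}{2}\omega_i$, and the coefficients of $\cos 2\sigma_i$ reduce on both sides to $\cos\omega_i(1+\cos\omega_i)/(2\sin\omega_i)-\tfrac{1}{2}\sin\omega_i$, completing the verification. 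The only subtlety in the whole argument is this last algebraic identity; everything else is mechanical, so I do not anticipate any real obstacle.
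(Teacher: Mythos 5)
Your proposal is correct and follows essentially the same route as the paper: pair consecutive crossings, apply sum-to-product identities, use \cref{l.angle_bound} together with the monotonicity of $\tan$ to replace $\tan\alpha_j$ by $\tan\tfrac{1}{2}\omega_i$, and then verify an elementary trigonometric identity (your expansion in the basis $\{1,\cos 2\sigma_i\}$ checks out, with both coefficients of $\cos 2\sigma_i$ equal to $(\cos\omega_i+\cos 2\omega_i)/(2\sin\omega_i)$). The paper performs the same final algebra by factoring rather than by comparing coefficients, but the substance is identical.
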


\begin{proof} 
We can rewrite \eqref{e.1st} as
$$
I'(0)  
= \frac{1}{2} \sum_{i=1}^{n} ( \sin 2\xi_{2i} - \sin 2\xi_{2i-1})  \, , 
$$
so \eqref{e.new_1st} follows from the identity
$$
\sin x - \sin y = 2  \sin \frac{x-y}{2}  \cos \frac{x+y}{2}
$$
(together with the definitions \eqref{e.new_parameters}).
Analogously, rewriting \eqref{e.2nd} as
$$
-I''(0) = \frac{1}{4} \sum_{i=1}^{n} \left[ \sin 4\xi_{2i} - \sin 4\xi_{2i-1}
+ \frac{1+ \cos 4\xi_{2i}}{\tan \alpha_{2i}} + \frac{1 + \cos 4\xi_{2i-1}}{\tan \alpha_{2i-1}} \right] \, . 
$$
By \cref{l.angle_bound}, $\max(\alpha_{2i},\alpha_{2i-1}) \le \tfrac{1}{2} \omega_i$.
So, using the identity
$$
\cos x + \cos y = 2  \cos \frac{x-y}{2}  \cos \frac{x+y}{2}
$$
we obtain:
$$
-I''(0) \ge \frac{1}{2} \sum_{i=1}^{n} \left[ \sin 2\omega_i \cos 2\sigma_i + \cot \tfrac{1}{2}\omega_i \, \Bigl(1 + \cos 2\omega_i \cos 2\sigma_i \Bigr) \right] \, . 
$$
Let us manipulate the quantity between square brackets.
For simplicity of writing, we omit the $i$ indices:
\begin{align*}
[\dots]
&= \underbrace{\sin 2\omega}_{2 \sin \omega \cos \omega} \cos 2\sigma + \cot \tfrac{1}{2}\omega \, \Bigl[ \underbrace{(-1 + \cos 2\omega)}_{-2\sin^2 \omega}\cos 2\sigma +  \underbrace{(1 + \cos 2\sigma)}_{2 \cos^2 \sigma} \Bigr]
\\ 
&= 2 \sin \omega \underbrace{\Bigl( \cos \omega - \cot \tfrac{1}{2}\omega \, \sin \omega\Bigr)}_{1} \underbrace{\vphantom{\Bigl(\Bigr)} \cos 2\sigma}_{\cos^2 \sigma - \sin^2 \sigma} + \  2 \cot \tfrac{1}{2}\omega \cos^2 \sigma 
\\
&= 2\sin\omega \sin^2 \sigma + 2 \left( \cot \tfrac{1}{2}\omega - \sin\omega \right) \cos^2 \sigma \, ,
\end{align*}
yielding \eqref{e.new_2nd}.
\end{proof}

\subsection{Proof of the key Proposition~\ref{p.key}}\label{ss.key_proof}

The proof is a case-by-case analysis; in most of the cases we will show that $I''(0)$ is negative and away from zero, but in a few cases the conclusion is that $I'(0)$ is away from zero.
All the estimates on those derivatives will be obtained from \cref{l.new}, which will not be explicitly mentioned each time.
All estimates are explicit and ultimately we will obtain a lower bound for $\max\{|I'(0)|,-I''(0)\}$ that depends only on $\kappa$ from \cref{l.restr}, and therefore is a function of $\epsilon$ which is independent of $K$.

Let us introduce some notation:
\begin{align*}
f(\omega)			&\coloneqq \cot \tfrac{1}{2}\omega - \sin \omega 
	 				= \cot \omega - \csc \omega - \sin \omega \, , \\
g(\omega,\sigma)	&\coloneqq \sin \omega \sin^2 \sigma + f(\omega) \cos^2 \sigma \, .
\end{align*}
So the fundamental inequality \eqref{e.new_2nd} can be rewritten as:
\begin{equation}\label{e.gatito}
-I''(0) \ge \sum_{i=1}^n g(\omega_i,\sigma_i) \, .
\end{equation}
Note that $g(\omega,\sigma)$ is a convex combination of the two functions $\sin \omega$ and $f(\omega)$, which are  plotted in \cref{f.2_functions}.
We will use this fact repeatedly to obtain bounds.  
Note that the abscissa of the crossing between the two graphs is $\tfrac{\pi}{3}$, that 
$\min g = \min f  
> -0.31$, 
and that
\begin{equation}\label{e.minus_half}
f(\omega) \ge - \tfrac{1}{2} \sin \omega \, \quad \forall \omega \in (0,\pi) \, .
\end{equation}
Recall that $\omega_1$ is the biggest of all angles $\omega_i$'s and so it is the only angle that can be bigger than $\tfrac{\pi}{2}$; therefore the sum in \eqref{e.gatito} contains at most one negative term.

\begin{center}
	\begin{figure}[hbt]
	\includegraphics[width=.85\textwidth]{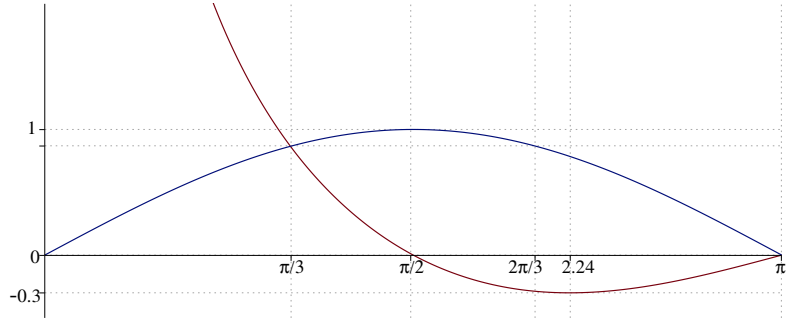}
	\caption{Plot of the functions $\sin$ and $f$.}\label{f.2_functions}
	\end{figure}
\end{center}

\medskip\noindent\textbf{Case 1.} 
$|\cos \sigma_1| \le 0.8$.
Then:
\begin{align*}
-I''(0)
&\ge g(\omega_1,\sigma_1)  \\
&=   \sin \omega_1 \sin^2 \sigma_1 + f(\omega_1) \cos^2 \sigma_1  \\
&\ge \sin \omega_1 \left( \sin^2 \sigma_1  - \tfrac{1}{2} \cos^2 \sigma_1 \right) &\quad&\text{(by \eqref{e.minus_half})} \\
&\ge \sin \omega_1 \left( 0.36 - \tfrac{1}{2} 0.64 \right) \\
&\ge 0.02 \sin \kappa &\quad&\text{(by the first inequality in \eqref{e.restr1})} \\
&>0 \, .
\end{align*}

\medskip

In the remaining cases, we assume $\boxed{|\cos \sigma_1| > 0.8}$.

\medskip\noindent\textbf{Case 2.} $n=1$.
Then, by the first inequality in \eqref{e.restr1},
$$
|I'(0)| = \sin \omega_1 |\cos\sigma_1| > 0.8 \sin \kappa > 0 \, .
$$

\medskip

In the remaining cases, we assume $\boxed{n \ge 2}$.

\medskip\noindent\textbf{Case 3.} $\omega_1 \le \tfrac{3\pi}{8}$.  
Then, by the first inequality in \eqref{e.restr1}, 
$$
-I''(0)
\ge g(\omega_1,\sigma_1) 
\ge \min \bigr\{ \sin \omega_1, f(\omega_1) \bigl\}
\ge \min \bigr\{ \sin \kappa, f(\tfrac{3\pi}{8}) \bigl\} > 0 \, .
$$

\medskip

In the remaining cases, we assume $\boxed{\omega_1 > \tfrac{3\pi}{8}}$.
Recall from \Cref{ss.geometric} that $\omega_s$ is the second biggest of the $\omega_i$'s.

\medskip\noindent\textbf{Case 4.} $\omega_s \ge \tfrac{\pi}{3}$.
So:
\begin{alignat*}{2}
-I''(0) 
&\ge g(\omega_1,\sigma_1) + g(\omega_s,\sigma_s)\\
&\ge f(\omega_1) + f(\omega_s)					
&\quad&\text{(since $\omega_1 \ge \omega_s \ge  \tfrac{\pi}{3}$)} 
\end{alignat*}
Now, $\omega_1 < \pi-\omega_s \le \tfrac{2\pi}{3}$ and the function $f$ is decreasing on the interval $(0,\tfrac{2\pi}{3}]$ (actually, it is decreasing on a slightly bigger interval), so:
$$
-I''(0) \ge f(\pi-\omega_s) + f(\omega_s) = 2 (\csc - \sin)(\omega_s).
$$
Now, using \eqref{e.restr2} we obtain:
$$
-I''(0) \ge 2 (\csc - \sin)(\tfrac{\pi}{2} - \kappa) > 0
$$
and we are done in this case.
	
\medskip

In the remaining cases, we assume $\boxed{ \omega_s < \tfrac{\pi}{3} }$.
Define the following numbers:
\begin{align*}
\kappa' &\coloneqq 0.1 \sin \kappa \, , \\
\Lambda	&\coloneqq \max\big\{ -g(\omega_1,\sigma_1) , 0 \big\} + \kappa' \, , \\ 
\Sigma	&\coloneqq \sum_{i=2}^n g(\omega_i,\sigma_i) \, .
\end{align*}

\medskip\noindent\textbf{Case 5.} $\Sigma \ge \Lambda$.
Then:
\begin{align*}
-I''(0) 
&=		\Sigma + g(\omega_1,\sigma_1)  \\
&\ge	\Sigma + \kappa' - \Lambda  \\
&\ge 	\kappa' 
\end{align*}
and we are done. 

\medskip

In the final and most interesting case, we assume $\boxed{\Sigma < \Lambda}$.

\medskip\noindent\textbf{Case 6.}
Let us establish two upper estimates for $\Lambda$;
the first one is:
\begin{equation}\label{e.kappa1}
\Lambda \le \max(-g) + \kappa' < 0.31 + \kappa' < 0.41 \, ,
\end{equation}
and the second one is:
\begin{alignat}{2}
\Lambda  &\le -g(\omega_1,\sigma_1) + \kappa' 	\notag \\
		&\le -f(\omega_1) + \kappa'				&\quad&\text{(since $\omega_1 > \tfrac{3\pi}{8} > \tfrac{\pi}{3}$)} \notag\\
		&\le \tfrac{1}{2} \sin \omega_1 + \kappa' &\quad&\text{(by \eqref{e.minus_half}).} \label{e.kappa2}
\end{alignat}

Define:
$$
\Delta \coloneqq \arcsin \Lambda,
$$
which by \eqref{e.kappa1}, satisfies $\Delta < 0.43$.

Note that:
$$
\sin \Delta = \Lambda > \Sigma \ge g(\omega_s,\sigma_s) \ge \sin \omega_s \qquad \text{(since $\omega_s < \tfrac{\pi}{3}$)},
$$
that is,  $\boxed{\omega_s \le \Delta}$.

Let $\varphi(\omega) \coloneqq f(\omega) - \sin(\omega)$.
We can rewrite $\Sigma$ as:
$$
\Sigma = \sum_{i=2}^n \left[ \sin \omega_i + \varphi(\omega_i) \cos^2 \sigma_i \right] \, .
$$
Since $\omega_s \le \Delta < \tfrac{\pi}{3}$ and the function $\varphi$ is positive and decreasing on the interval $(0,\tfrac{\pi}{3})$, the assumption $\Sigma < \Lambda$ yields two other inequalities:
\begin{align*}
\sum_{i=2}^n \sin \omega_i &< \Lambda  \, , \\
\sum_{i=2}^n \cos^2 \sigma_i &< \frac{\Lambda}{\varphi(\Delta)} \, .
\end{align*}
So, on one hand,
$$
\sum_{i=2}^n \sin^2 \omega_i 
\le \sin \omega_s \sum_{i=2}^n \sin \omega_i
\le \Lambda^2  \, .
$$
On the other hand, recalling that $\Lambda = \sin \Delta$ and $\varphi(\Delta) = \cot \Delta + \csc \Delta - 2 \sin \Delta$,
$$
\sum_{i=2}^n \cos^2 \sigma_i 
< \frac{\Lambda}{\varphi(\Delta)} 
=   \frac{\Lambda^2}{\cos \Delta + 1 - 2 \sin^2 \Delta} 
\le \frac{\Lambda^2}{\cos 0.43 + 1 - 2 \sin^2 0.43}
< 0.65 \Lambda^2 \, . 
$$

By Cauchy--Schwarz inequality,
$$
\left| \sum_{i=2}^n \sin \omega_i \cos \sigma_i \right| 
\le  \sqrt{ \sum_{i=2}^n \sin^2 \omega_i \times \sum_{i=2}^n \cos^2 \sigma_i}
< \sqrt{\Lambda^2 \times 0.65 \Lambda^2}
< 0.81 \Lambda^2 
< \Lambda \, .
$$
This inequality together with \eqref{e.kappa2} allows us to show that $A'(0)$ is not too close to zero:
\begin{align*}
|I'(0)|
&\ge  \bigl| \sin \omega_1 \cos \sigma_1 \bigr| - \left| \sum_{i=2}^n \sin \omega_i \cos \sigma_i \right| \\
&\ge 0.8 \sin \omega_1 - \Lambda \\
&\ge 0.3 \sin \omega_1 -  \kappa' \\
&\ge 0.2 \sin \kappa  \\
&>0 \, .
\end{align*}
This concludes the proof of \cref{p.key}.

\section{Proof of the quasiconcavity Theorem~\ref{t.qc}} \label{s.implication}

\subsection{Setting up the proof}\label{ss.setup}

Let us say that a centrally symmetric body $K \in \cC^2$ is \emph{regular} if it satisfies the following conditions:
\begin{enumerate}
\item\label{i.reg1}
the boundary $\partial K$ is a $C^2$ curve;
\item\label{i.reg2}
there is a finite set $T \subset \R$ such that for every $t \in \R \setminus T$, the curves $\partial K$ and $\partial E_t$ are transverse;
\item\label{i.G3}
for every $t \in \R$, the curves $\partial K$ and $\partial E_t$ are quasitransverse, and 
the points of (necessarily quadratic) tangency do not belong to the envelope hyperbolas $xy = \pm 1/2$ of the family of curves $(\partial E_t)_{t \in \R}$.
\end{enumerate}

We will prove that regularity is dense in $\cC^2$; actually we will show more: 

\begin{proposition}[Regularization]\label{p.regularization}
For every $K \in \cC^2$ and every $\epsilon>0$ there exists a regular $\tilde K \in \cC^2$ such that $\area{\tilde K} = \area{K}$ and $d_\mathrm{sym}(\tilde K, K) < \epsilon$.
\end{proposition}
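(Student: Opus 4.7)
My plan is to construct $\tilde K$ in two stages: first smooth $K$ to a $C^\infty$ strictly convex centrally symmetric body $K_1$ of the same area, and then apply a small generic rotation to put $K_1$ into general position with respect to the family $(\partial E_t)_{t \in \R}$.

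For the smoothing, I would view the support function $h_K$ as a $\pi$-periodic function and convolve it with a nonnegative even $C^\infty$ bump $\varphi$ of small support and unit mass. Since $(h_K \ast \varphi)'' + (h_K \ast \varphi) = (h_K'' + h_K) \ast \varphi$ is nonnegative smooth (positive whenever the support of $\varphi$ is wide enough to cover any flat arcs of $\partial K$), the convolution is the support function of a $C^\infty$ strictly convex centrally symmetric body $K'$ approximating $K$ in the Hausdorff metric, hence in $d_\mathrm{sym}$. An area-preserving homothecy of factor $1 + O(\epsilon)$ restores the area exactly, yielding $K_1$ with $\area{K_1} = \area{K}$ and $d_\mathrm{sym}(K, K_1) < \epsilon/2$. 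If needed, I would also make a further infinitesimal perturbation at this stage to ensure that $K_1$ is not itself one of the ellipses $E_t$, which is generic.

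For the generic rotation, I would set $\tilde K_\theta := R_\theta(K_1)$ for small $\theta$. Rotations preserve $C^\infty$ strict convexity, central symmetry, and area, and $d_\mathrm{sym}(K_1, \tilde K_\theta) \to 0$ as $\theta \to 0$. To establish conditions (2) and (3) for almost every small $\theta$, I would parametrize $\partial K_1$ by outward normal angle $\phi \mapsto p(\phi)$ and introduce the tangency map
\[
\Phi(\phi, t, \theta) := \bigl(\psi_t(q) - 1,\ \det[R_\theta\, n(\phi),\, \nabla \psi_t(q)]\bigr), \qquad q = R_\theta(p(\phi)),
\]
where $\psi_t(x,y) = e^t x^2 + e^{-t} y^2$ and $n(\phi) = (\cos\phi, \sin\phi)$. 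Zeros of $\Phi$ correspond precisely to tangencies between $\partial \tilde K_\theta$ and some $\partial E_t$. Granted that $0$ is a regular value of $\Phi$, its preimage is a $C^\infty$ $1$-submanifold, compact because tangencies force $t$ to lie in a bounded interval (the body $\tilde K_\theta$ lies in a fixed compact set). Sard's theorem applied to the projection $\Phi^{-1}(0) \to (-\theta_0, \theta_0)$ then shows that for almost every $\theta$ the fibre is finite, giving (2). The remaining parts of (3)---excluding cubic tangencies and forcing tangency points off the envelope hyperbolas $xy = \pm 1/2$---are each codimension-one loci inside $\Phi^{-1}(0)$ whose projections to $\theta$ have measure zero. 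Picking $\theta$ outside the exceptional set, with $|\theta|$ small enough that $d_\mathrm{sym}(K_1, \tilde K_\theta) < \epsilon/2$, the triangle inequality delivers the required $\tilde K$.

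The main obstacle I foresee is verifying that $0$ is a regular value of $\Phi$: one must check that at each tangency the differential $d\Phi$, restricted to the $(t, \theta)$ directions, surjects onto $\R^2$. Varying $t$ perturbs the first component $\psi_t(q) - 1$ nondegenerately since $q$ stays away from the origin, while varying $\theta$ rotates the outward normal $R_\theta n(\phi)$ and therefore generically moves the second (tangent-alignment) component, provided $K_1$ is genuinely not an ellipse of the family---which is precisely why the preliminary ``$K_1 \neq E_t$'' step in the smoothing stage is essential. Once that case-by-case transversality analysis is in hand, everything else is a standard Sard argument.
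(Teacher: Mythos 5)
The decisive step of your argument --- that $0$ is a regular value of $\Phi$ --- is precisely where the proof has a gap, and the heuristic you offer for it fails at the critical configurations. At a zero of $\Phi$ the $\phi$-derivative of the first component vanishes automatically (that is what tangency means), so surjectivity of $d\Phi$ must come from the $(t,\theta)$-derivatives of the first component together with the second row. But
$\partial_t\bigl(\psi_t(q)-1\bigr)=e^t x_q^2-e^{-t}y_q^2$, which together with $\psi_t(q)=1$ vanishes exactly when $e^tx_q^2=e^{-t}y_q^2=\tfrac12$, i.e.\ when the tangency point $q$ lies on the envelope hyperbolas $x y=\pm\tfrac12$ --- the very locus that condition (3) is supposed to exclude. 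So the claim that varying $t$ perturbs the first component nondegenerately because $q$ stays away from the origin is false. The only other entry in that row is $\partial_\theta\psi_t(q)=\nabla\psi_t(q)\cdot Jq=2x_qy_q\,(e^{-t}-e^{t})$ (where $Jq=(-y_q,x_q)$), which vanishes when $t=0$; hence at a tangency with $S^1$ at one of the points $(\pm 1/\sqrt2,\pm1/\sqrt2)$ the entire first row of $d\Phi$ is zero and $0$ is not a regular value. Nothing in your construction rules such a configuration out. More structurally, the genericity you need lives in the three-dimensional space of contact elements $(q,[\text{tangent direction}])$, and a one-parameter group of rotations is too small a family to achieve transversality there by a parametric argument; the ``case-by-case transversality analysis'' you defer is the actual content of the proposition, not a technicality. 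The paper's remedy is to perturb by the full three-dimensional group $\SL(2,\R)$ --- still area-preserving, so nothing is lost --- which makes the evaluation map $(\theta,L)\mapsto L\hat g(\theta)$ a submersion onto the contact-element space $Y\subset\R^2\times\P^1$; the parametric transversality theorem then gives an open dense set of good $L$ with no case analysis, and the hyperbola condition comes for free because the corresponding lifted locus $Z_1$ is a $1$-dimensional submanifold of the $3$-dimensional ambient space, so transversality to it means avoiding it.

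A secondary problem lies in the smoothing step. If $K$ is, say, a polygon, then $h_K''+h_K$ is a finite sum of atoms, so for $\varphi$ with small support $(h_K*\varphi)''+(h_K*\varphi)$ vanishes on open sets of normal directions; the boundary parametrization $\phi\mapsto h(\phi)u(\phi)+h'(\phi)u'(\phi)$ then fails to be an immersion there and the resulting body still has corners, i.e.\ condition (1) is not met. You cannot simultaneously keep the support of $\varphi$ small (for a small perturbation) and ``cover the flat arcs.'' This is fixable by standard means --- e.g.\ adding a small multiple of the support function of the disk, which forces $h''+h>0$ --- but as written the step does not deliver a $C^2$ boundary.
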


On the other hand, using \cref{p.formulas,p.tangency,p.key} one can check that \cref{t.qc} holds for regular convex bodies in $\cC^2_\pi$, that is, the associated intersection functions are strictly quasiconcave.
Actually, the uniformity provided by \cref{p.key} will allow us to prove a more precise property:

\begin{proposition}[Quantitative quasiconcavity]\label{p.new_qqc}
Given $\epsilon > 0$ and $r > 0$, there exists $\eta > 0$ with the following properties.
For every regular $K \in \cC^2_\pi$, if $t_0$, $t_1$, $t_2 \in \R$ are such that: 
\begin{equation*}
t_0 + r \le t_1 \le t_2 - r \quad \text{and} \quad
2\epsilon \le I_K(t_1) \le \max\{\pi,\area{K}\} - 2\epsilon \, ,
\end{equation*}
then:
\begin{equation*}
I_K(t_1) > \min\{I_K(t_0), I_K(t_2)\} + \eta \, .
\end{equation*}
\end{proposition}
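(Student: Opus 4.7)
The plan is to combine a translation-equivariance of the family $(E_t)$ with the key \cref{p.key} to obtain a uniform pointwise bound on the derivatives of $I_K$, and then extract the quantitative quasiconcavity from this bound via a short ODE-style argument.

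First I would perform a reduction. Set $D_t \coloneqq \mathrm{diag}(e^{t/2}, e^{-t/2}) \in \SL(2,\R)$; a direct calculation gives $D_t(E_{t+s}) = E_s$ for all $s,t \in \R$, and since $\det D_t = 1$ one obtains the identity $I_K(t+s) = I_{D_tK}(s)$. The family $(D_t)$ also preserves the area of $K$ and the set of regular bodies: condition (\ref{i.reg1}) is $\SL$-invariant, condition (\ref{i.reg2}) is preserved because $D_u$ permutes the ellipses $E_t$, and condition (\ref{i.G3}) is preserved because $D_u$ fixes the envelope hyperbolas $xy = \pm 1/2$ setwise. Applying \cref{p.key} to $D_tK$ at parameter $0$ therefore yields: for every regular $K \in \cC^2_\pi$, every $\epsilon>0$, and every $t \in \R$ satisfying
\begin{equation*}
\epsilon \le I_K(t) \le \min\{\pi, \area{K}\} - \epsilon
\end{equation*}
at which $\partial K$ meets $\partial E_t$ transversally, one has $\max\{|I_K'(t)|, -I_K''(t)\} > \delta(\epsilon)$, where $\delta(\epsilon)>0$ is the constant from \cref{p.key}. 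Since $K$ is regular, transversality fails only on a finite set.

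Next I would prove a purely deterministic lemma: any $C^1$ function $f \colon \R \to \R$ which is $C^2$ off a finite set and satisfies $\max\{|f'(t)|, -f''(t)\} > \delta$ at every $t$ (where $f''$ exists) with $\epsilon \le f(t) \le M - \epsilon$, also satisfies $f(t_1) \ge \min\{f(t_0), f(t_2)\} + \eta(\delta, r, \epsilon)$ whenever $t_0 + r \le t_1 \le t_2 - r$ and $2\epsilon \le f(t_1) \le M - 2\epsilon$, for an explicit positive $\eta$. The proof rests on a dichotomy: at points $s$ (in the good range) where $|f'(s)| \le \delta/2$, integrating the differential inequality $f'' < -\delta$ shows that $f'$ decreases by at least $\delta$ across any maximal such subinterval, and $f$ drops by a positive amount depending only on $\delta$; at points where $|f'| > \delta$, $f$ is strictly monotone with slope at least $\delta$ in absolute value. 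A case analysis on whether the restricted maximum of $f$ on $[t_0, t_2]$ is interior or at an endpoint then yields the required quantitative gap $\eta$.

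To finish, I would apply the lemma to $f = I_K$ with $M$ chosen appropriately (the hypothesis $I_K(t_1) \le \max\{\pi, \area{K}\} - 2\epsilon$ combined with the trivial bound $I_K \le \min\{\pi, \area{K}\}$ provides the needed headroom), noting that the qualitative quasiconcavity of $I_K$ (itself an immediate consequence of the pointwise bound from the first step) ensures $I_K(t) \ge \min\{I_K(t_0), I_K(t_2)\} \ge \epsilon$ throughout $[t_0, t_2]$, so that the hypothesis of the deterministic lemma is in force on the whole interval. I expect the main obstacle to be the deterministic lemma in the case where the restricted maximum of $f$ on $[t_0, t_2]$ lies at one of the endpoints: there $f$ is essentially monotone and the required drop from $f(t_1)$ to the smaller endpoint value must be extracted solely from the monotonicity side of the dichotomy, requiring care to rule out long subintervals where $|f'| \le \delta/2$ without encountering a critical point.
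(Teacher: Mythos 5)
Your overall strategy --- reduce to a pointwise derivative bound via the diagonal subgroup, then extract the gap by an ODE-style argument on an interval of length $r$ --- is the same as the paper's, and the first and last steps are sound. The genuine problem is the middle step: the ``purely deterministic lemma'' is false as stated. Its hypothesis constrains $f$ only on the band $\{\epsilon \le f \le M-\epsilon\}$, so nothing prevents $f$ from dipping below $\epsilon$ strictly between $t_1$ and $t_2$ and climbing back up: a $W$-shaped function, steep wherever it crosses the band, with both local minima at height below $\epsilon$ and all local maxima sharply concave, satisfies your hypothesis with $f(t_0)=f(t_2)=M/2$ and $f(t_1)=2\epsilon$, yet violates the conclusion. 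What rules this out for $f=I_K$ is not the bound on a fixed band but the global unimodality of $I_K$, which the paper establishes first (parts (\ref{i.collect4})--(\ref{i.collect5}) of \cref{l.collect}): since $0<I_K<\min\{\pi,\area{K}\}$ holds strictly everywhere for $K\in\cC^2_\pi$, \cref{p.key} applies at \emph{every} critical point (with an $\epsilon$ depending on the critical value), so every critical point is a strict local maximum; combined with $I_K\to 0$ at $\pm\infty$ this yields a unique critical point $t_*$, with $I_K$ increasing before it and decreasing after. You do invoke ``qualitative quasiconcavity'' at the end, but only to assert $I_K\ge\min\{I_K(t_0),I_K(t_2)\}\ge\epsilon$ on $[t_0,t_2]$ --- the second inequality is unjustified, since nothing bounds $I_K(t_0)$ or $I_K(t_2)$ from below (that case is handled trivially by taking $\eta\le\epsilon$) --- rather than to constrain the shape of $I_K$ inside the lemma, which is where it is actually needed.

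Once unimodality is fed in, the endpoint-maximum case you flag as the main obstacle disappears: assuming $t_1\ge t_*$, the function $I_K$ is nonincreasing on $[t_1,t_2]$, so it suffices to show $I_K(t_1+r)\le I_K(t_1)-\eta$. Either $I_K$ drops below $\epsilon$ on $[t_1,t_1+r]$ (done, since $I_K(t_1)\ge 2\epsilon$ and $\eta \le \epsilon$), or the bound of \cref{p.key} holds throughout that interval; then either $I_K'\le-\delta$ on all of it (mean value theorem), or the set where $I_K'>-\delta$ is an initial subinterval on which $I_K''<-\delta$, giving $I_K'(t)\le-\delta(t-t_1)$ everywhere and hence a drop of at least $\tfrac{1}{2}\delta r^2$. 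This is the paper's computation and is what your dichotomy should reduce to; note in passing that your thresholds $\delta/2$ and $\delta$ leave the range $\delta/2<|f'|\le\delta$ uncovered (harmless, since $-f''>\delta$ there as well, but the clean split is at $\delta$ itself), and that on a subinterval where $|f'|$ is small $f$ need not drop at all, so the quantitative gain must come from the integrated quadratic estimate rather than from a per-subinterval decrease.
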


Let us postpone the proofs of \cref{p.regularization,p.new_qqc}, and use them to deduce the \lcnamecref{t.qc}:

\begin{proof}[Proof of \cref{t.qc}]
Fix $K \in \cC^2_\pi$ and arbitrary numbers $t_0 < t_1 < t_2$. 
Let 
$$
r \coloneqq \min\big\{t_1 - t_0, t_2-t_1\big\} \quad \text{and} \quad
\epsilon \coloneqq \tfrac{1}{3}\min\big\{I_K(t_1), \max\{\pi,\area{K}\} - I_K(t_1) \big\} \, .
$$
Let $\eta = \eta(\epsilon,r)$ be given by \cref{p.new_qqc}.
Reducing $\eta$ if necessary, we assume $0 < \eta \le 4\epsilon$.
By \cref{p.regularization}, there exists a regular body $\tilde K \in \cC^2$ with the same area as $K$ such that $d_\mathrm{sym}(\tilde K, K)< \frac{\eta}{2}$.
Recalling that $\cC^2_\pi$ is open in $\cC^2$, we can assume that $\tilde K \in \cC^2_\pi$.
As a consequence of relation \eqref{e.inclusion_exclusion}, for every $t \in \R$ we have $\big| I_{\tilde K} (t) - I_K(t)\big| < \frac{\eta}{4} \le \epsilon$. In particular,
\begin{align*}
I_{\tilde K} (t_1) &\ge I_K(t_1) - \epsilon \ge 3\epsilon - 2\epsilon \, , \\ 
I_{\tilde K} (t_1) &\le I_K(t_1) + \epsilon \le \max\{\pi,\area{\tilde K}\} -3\epsilon + \epsilon  \, . 
\end{align*}
This allows us to apply \cref{p.new_qqc} to the convex body $\tilde K$ and obtain 
$I_{\tilde K} (t_1) \ge \min\{I_{\tilde K}(t_0) , I_{\tilde K}(t_1) \} + \eta$. 
It immediately follows that 
$I_K (t_1) \ge \min\{I_K(t_0) , I_K(t_1) \} + \frac{\eta}{2}$. 
This proves that the function $I_K$ is quasiconcave.
\end{proof}

\subsection{Proof of the regularization Proposition~\ref{p.regularization}}\label{ss.regularization}

Let $\P^1$ denote the projective space of $\R^2$, i.e.\ the set of all lines through the origin. Let $[v]\in \P^1$ denote the line determined by a nonzero vector $v \in \R^2_*$.

If $\Gamma \subset \R^2$ is any smooth $1$-dimensional submanifold, 
denote by $\hat{\Gamma} \subset \R^2 \times \P^1$ the set of pairs $(u,[v])$ such that $u \in \Gamma$ and $v$ is tangent to $\Gamma$ at $u$.
Define the following sets:
\begin{alignat*}{2}
Z_1 &\coloneqq \hat{H}  &\quad &\text{where $H$ is the pair of hyperbolas $xy  = \pm 1/2$;}  \\ 
Z_2 &\coloneqq \bigcup_{t\in\R} \widehat{\partial E_t} &\quad &\text{where $E_t$ are the ellipses~\eqref{e.standard_family}.}
\end{alignat*}
The latter union is disjoint, because any two distinct ellipses in our family have transverse boundaries. 

\begin{lemma}\label{l.submanifolds}
$Z_1$ and $Z_2$ are closed smooth submanifolds of $\R^2 \times \P^1$ of respective dimensions $1$ and $2$, and $Z_1 \subset Z_2$.
\end{lemma}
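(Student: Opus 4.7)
The plan is to handle $Z_1$ and $Z_2$ separately, then verify the inclusion. For $Z_1 = \hat H$, the set $H = \{2xy = \pm 1\}$ is a smooth closed one-dimensional submanifold of $\R^2$ (four hyperbola branches), and the tangent line to $H$ through $(x,y)$ is spanned by the never-vanishing vector $(x,-y)$. Hence $\hat H$ is the graph of the smooth map $H \to \P^1$, $(x,y) \mapsto [(x,-y)]$, so it is an embedded smooth closed one-dimensional submanifold of $\R^2 \times \P^1$.

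For $Z_2$, I plan to exhibit a smooth embedding of a cylinder with image $Z_2$. Using the natural parameterization of the family, set
$$F(\theta, t) \coloneqq \bigl((e^{-t/2}\cos\theta,\, e^{t/2}\sin\theta),\ [(-e^{-t/2}\sin\theta,\, e^{t/2}\cos\theta)]\bigr)$$
for $(\theta,t) \in (\R/2\pi\Z) \times \R$, so that $F(\cdot, t)$ parameterizes $\widehat{\partial E_t}$ and the image of $F$ equals $Z_2$. I would verify the following in turn. \emph{Injectivity}: if $F(\theta,t) = F(\theta',t')$ with $t \neq t'$, then the distinct ellipses $\partial E_t$ and $\partial E_{t'}$ would share both a point and a tangent direction, which a short calculation rules out; hence $t = t'$, and then $\theta = \theta'$ from the usual parameterization of $\partial E_t$. \emph{Immersion}: the Jacobian determinant of the $\R^2$-component of $DF$ equals $\tfrac12 \cos 2\theta$, vanishing only at the four envelope points of each $\partial E_t$; at those points a one-line calculation in a projective chart shows that the $\P^1$-component of $DF$ is nonzero on the direction killed by the $\R^2$-component, so $DF$ has full rank $2$ throughout. \emph{Proper embedding}: sequences $(\theta_n, t_n)$ with $t_n \to \pm\infty$ produce images whose positions tend to the coordinate axes with tangent directions that no ellipse of the family realizes there, so such sequences have no limit in $Z_2$; combined with the preceding two properties, this gives that $F$ is a smooth embedding and $Z_2$ is a smooth two-dimensional submanifold.

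Finally, $Z_1 \subset Z_2$ is the envelope property: each $(x,y) \in H$ lies on a unique ellipse $\partial E_t$ of the family, at which point $\partial E_t$ is tangent to $H$ to second order, so the common tangent direction is $[(x,-y)]$; hence $((x,y), [(x,-y)]) \in \widehat{\partial E_t} \subset Z_2$. The main obstacle I anticipate is the immersion check of $F$ at the envelope points, where the projection to $\R^2$ folds and the two local sheets of $Z_2$ (coming from the two ellipses through a nearby generic point) merge into one. A cleaner alternative that avoids case analysis is to describe $Z_2$ locally as the zero set of a smooth function: in the projective chart $v_1 = 1$, $p = v_2$, elimination of $t$ from the ellipse and tangency equations yields the equation $\Phi(x,y,p) \coloneqq xy(y - xp)^2 + p = 0$, and a direct calculation shows $\nabla \Phi$ does not vanish on $Z_2$, so the implicit function theorem produces the submanifold structure in one stroke.
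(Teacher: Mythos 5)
Your proof is correct, but it takes a genuinely different route from the paper's. The paper avoids all of your case analysis by exploiting the $\SL(2,\R)$-action: via the QR decomposition it puts global coordinates $(\tau,\rho,\xi)$ on the open set $Y\subset\R^2\times\P^1$ of pairs $(u,[v])$ with $u,v$ linearly independent (which contains $Z_1$ and $Z_2$), identified with $\SL(2,\R)$ itself. In these coordinates $\widehat{\partial E_0}$ is $\{\tau=0,\ \rho=0\}$ and the diagonal flow is translation in $\tau$, so $Z_2$ is the coordinate surface $\{\rho=0\}$ and $Z_1$ is the four lines $\{\rho=0,\ \xi=\pm\tfrac{\pi}{4},\pm\tfrac{3\pi}{4}\}$; the submanifold structure, the dimensions, and the inclusion $Z_1\subset Z_2$ are then immediate. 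Your approach is more computational but sound: the graph description of $Z_1$ over $H$ is fine, the Jacobian $\tfrac12\cos 2\theta$ of the position component of $F$ is correct, and the rank-two check at the fold locus $\cos 2\theta=0$ (which is precisely where $Z_2$ projects onto the envelope $H$, i.e.\ onto $Z_1$) does go through: at $\theta=\pi/4$ the kernel of the position differential is spanned by $\partial_\theta-2\partial_t$, on which the slope coordinate has derivative $4e^t\neq 0$. Your alternative via $\Phi(x,y,p)=xy(y-xp)^2+p$ also works, with two caveats: $\partial_p\Phi=1-2x^2y(y-xp)$ vanishes exactly on $Z_1$, so there you must invoke $\partial_x\Phi=y(y-xp)(y-3xp)\neq 0$; and the squaring used to eliminate $t$ means $\{\Phi=0\}$ may strictly contain $Z_2$, so it only yields a local description near $Z_2$.

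One point worth making explicit, which concerns the statement more than your argument: your properness discussion shows that images of sequences with $t_n\to\pm\infty$ either escape to infinity or converge to pairs $(u,[v])$ with $v$ parallel to $u$ (for instance $F(\theta_n,t_n)\to\bigl((0,1),[(0,1)]\bigr)$ when $\sin\theta_n=e^{-t_n/2}$ and $t_n\to+\infty$). Such limits are not in $Z_2$, so $F$ is indeed an embedding; but they do lie in $\R^2\times\P^1$, so $Z_2$ is closed only in the open set $Y$, not in all of $\R^2\times\P^1$. This is exactly what the paper's own proof establishes (closedness inside $Y\cong\SL(2,\R)$), and it is all that is needed later since the relevant maps take values in $Y$; literal closedness of $Z_2$ in $\R^2\times\P^1$ fails, and neither argument should be expected to prove it.
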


The \lcnamecref{l.submanifolds} is intuitively clear, 
but for completeness we provide a proof at the end of this \lcnamecref{ss.regularization}.

Let $\T \coloneqq \R / 2\pi\Z$, the additive group of real numbers mod $2\pi$.
A \emph{regular parametrization} of a smooth Jordan curve $\Gamma \subset \R^2$ is a map $g \colon \T \to \R^2$ that is a smooth diffeomorphism onto $\Gamma$.
In that case, let $\hat{g} \colon \T \to \R^2 \times \P^1$ denote the map $\hat{g}(\theta) \coloneqq (g(\theta), [g'(\theta)])$, which is a smooth diffeomorphism onto $\hat{\Gamma}$.

\begin{lemma}\label{l.transversalities}
Suppose $K \in \cC^2$ has smooth boundary, and $g \colon \T \to \R^2$ is a regular parametrization of it.
If $\hat{g}$ is transverse to both submanifolds $Z_1$ and $Z_2$ then the body $K$ is regular.
\end{lemma}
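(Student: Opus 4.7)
The plan is to verify each of the three regularity conditions from \cref{ss.setup} in turn. Condition \eqref{i.reg1} is automatic: $K \in \cC^2$ has $C^2$ boundary by hypothesis. The remaining two conditions will follow from the transversality assumptions and from the dimension counts $\dim\T + \dim Z_2 = 1+2 = 3 = \dim(\R^2 \times \P^1)$ and $\dim\T + \dim Z_1 = 1+1 < 3$.

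For condition \eqref{i.reg2}, transversality of $\hat g$ to the codimension-one submanifold $Z_2$ makes $\hat g^{-1}(Z_2)$ a zero-dimensional submanifold of the compact circle $\T$, hence a finite set. Because the family $(\widehat{\partial E_t})_{t\in\R}$ is pairwise disjoint, each $\theta$ in this preimage determines a unique parameter $t(\theta)$ with $\hat g(\theta) \in \widehat{\partial E_{t(\theta)}}$. Let $T$ be the (finite) image of $\theta \mapsto t(\theta)$. For $t \notin T$ no point $g(\theta) \in \partial E_t$ can share the tangent line of $\partial E_t$ at $g(\theta)$, so $\partial K$ and $\partial E_t$ are transverse.

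For the envelope half of condition \eqref{i.G3}, suppose for contradiction that a tangency point $u_0 = g(\theta_0)$ between $\partial K$ and some $\partial E_{t_0}$ lies on $H$. Since $H$ is the envelope of the family $(\partial E_t)$, the tangent line to $\partial E_{t_0}$ at $u_0$ coincides with the tangent line to $H$ at $u_0$; combined with the tangency of $\partial K$ to $\partial E_{t_0}$, this forces $g'(\theta_0)$ to be tangent to $H$ at $u_0$, so $\hat g(\theta_0) \in \hat H = Z_1$. But $\hat g \pitchfork Z_1$, together with the negative expected dimension $1+1-3 < 0$, forces $\hat g^{-1}(Z_1) = \emptyset$, a contradiction. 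Hence no tangency point lies on $H$.

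For the quasitransversality half it remains to upgrade every tangency to a quadratic tangency. Fix a tangency at $u_0 = g(\theta_0) = \gamma_{t_0}(\phi_0)$, where $\gamma_{t_0}$ is a regular parametrization of $\partial E_{t_0}$; locally near $\hat g(\theta_0)$ the submanifold $Z_2$ is parametrized by $(t,\phi) \mapsto \hat\gamma_t(\phi)$, so the tangent plane $T_{\hat g(\theta_0)}Z_2$ is spanned by $\partial_\phi \hat\gamma_{t_0}(\phi_0)$ and $\partial_t \hat\gamma_t(\phi_0)|_{t_0}$. In coordinates on $T_{\hat g(\theta_0)}(\R^2 \times \P^1)$ adapted to the common tangent direction at $u_0$ (tangential, normal, and slope components), and after rescaling $\theta$ so the tangent vectors coincide, $\hat g'(\theta_0)$ and $\partial_\phi\hat\gamma_{t_0}(\phi_0)$ read as $(1,0,\kappa_K)$ and $(1,0,\kappa_E)$, where $\kappa_K,\kappa_E$ denote the signed curvatures of $\partial K$ and $\partial E_{t_0}$ at $u_0$. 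Because the previous paragraph excludes $u_0 \in H$, a direct computation (using the polar form $\partial_t\gamma_t = (\partial_t r_t)(\cos\phi,\sin\phi)$ of the family) shows that the normal component of $\partial_t \hat\gamma_t(\phi_0)|_{t_0}$ is non-zero, and a short linear-algebra check then gives $\hat g'(\theta_0) \in T_{\hat g(\theta_0)} Z_2$ if and only if $\kappa_K = \kappa_E$. The assumption $\hat g \pitchfork Z_2$ therefore yields $\kappa_K \ne \kappa_E$, which is exactly the statement that the tangency is quadratic. The main technical point, and the only step that is not a routine dimension count, is this last identification: translating transversality against $Z_2$ into the classical ``curvatures differ'' criterion for quadratic (as opposed to higher-order) contact.
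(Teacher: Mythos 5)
Your proof is correct and follows essentially the same route as the paper's: the same dimension counts give finiteness of $\hat{g}^{-1}(Z_2)$ (hence condition (b)) and emptiness of $\hat{g}^{-1}(Z_1)$ (hence the envelope exclusion in condition (c)), and in both arguments the quadratic-tangency claim comes down to the observation that at a point of $\hat{g}^{-1}(Z_2)$ transversality to $Z_2$ fails exactly when the contact of $\partial K$ with $\partial E_{t_0}$ is of second order. The one place where your implementation differs is that last step: the paper argues abstractly that a second-order contact of the plane curves forces the lifted curves $\hat{g}$ and $\hat{h}_i$ (the latter an immersed curve inside $Z_2$) to be tangent, immediately contradicting transversality, whereas you compute $T_{\hat{g}(\theta_0)}Z_2$ in adapted coordinates and identify $\hat{g}'(\theta_0)\in T_{\hat{g}(\theta_0)}Z_2$ with $\kappa_K=\kappa_E$; your computation is valid, but note that it genuinely relies on the previously established exclusion of envelope points (to ensure $\partial_t\hat{\gamma}_t(\phi_0)$ has nonzero normal component), a dependency the paper's abstract version of this step avoids.
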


\begin{proof}
Let $K \in \cC^2$ and suppose that $\partial K$ has a regular parametrization $g$ such that $\hat{g}$ is transverse to both $Z_1$ and $Z_2$. 
The first regularity condition (\ref{i.reg1}) is automatic: the boundary $\partial K$ is actually smooth.

Since the ambient space $\R^2 \times \P^1$ is $3$-dimensional, transversality implies that there are finitely many (if any) parameters $\theta_i$ such that the point $\hat{g}(\theta_i)$ belongs to the surface $Z_2$. Each of these points belongs to a unique curve $\widehat{\partial E_{t_i}}$.
Let $T \subset \R$ be the set of the $t_i$'s.
If $t \not \in T$ then the image of $\hat{g}$ does not intersect the curve $\widehat{\partial E_t}$, which means that the plane curves $\partial K$ and $\partial E_t$ are transverse.
This shows that $K$ meets regularity condition (\ref{i.reg2}).

On the other hand, for each $i$, the plane curves $\partial K$ and $\partial E_t$ are tangent at the point $g(\theta_i)$.
Suppose for a contradiction that this tangency is \emph{not} quadratic, i.e., the curves have a second-order contact.
Choose a regular parametrization $h_i \colon \T \to \R^2$ of $\partial E_{t_i}$ such that $\hat{h_i}(\theta_i) = \hat{g}(\theta_i)$. 
Then parameterized curves $\hat{g}$ and $\hat{h_i}$ have a first-order contact (i.e.\ are tangent) at parameter $\theta_0$.
Since $\hat{h_i}$ is an immersion whose image is contained in the surface $Z_2$, we conclude that $\hat{g}$ is \emph{not} transverse to $Z_2$, which is a contradiction. 
We have shown that the the tangencies between the plane curves $\partial K$ and $\partial E_{t_i}$ are all quadratic, i.e., the curves are quasitranverse.

Furthermore, the fact that the mapping $\hat{g}$ is transverse to the $1$-dimensional submanifold $Z_1 \subset Z_2$ means that its image does not intersect $Z_1$. That is, all tangency points $g(\theta_i)$ are outside the forbidden hyperbolas $xy=\pm 1/2$. 
This concludes the proof that the body $K$ is regular.
\end{proof}

\begin{proposition}\label{p.transSL2R}
If $K \in \cC^2$ has smooth boundary then 
there is an open dense subset $U$ of $\SL(2,\R)$ such that 
if $L \in U$ then the body $LK$ is regular.
\end{proposition}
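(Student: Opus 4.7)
The plan is to apply the parametric transversality theorem to the natural family of parametrizations of $\partial(LK)$ indexed by $L \in \SL(2,\R)$. Fix a smooth regular parametrization $g_0 \colon \T \to \R^2$ of $\partial K$, and for each $L \in \SL(2,\R)$ let $g_L \coloneqq L \circ g_0$; this is a regular parametrization of $\partial(LK)$ with lift $\hat{g_L}(\theta) = (Lg_0(\theta), [Lg_0'(\theta)]) \in \R^2 \times \P^1$. By \cref{l.transversalities}, it suffices to produce an open dense subset $U \subset \SL(2,\R)$ on which $\hat{g_L}$ is transverse to both $Z_1$ and $Z_2$. Thus the whole problem reduces to the transversality of the global map
\[
F \colon \SL(2,\R) \times \T \longrightarrow \R^2 \times \P^1, \qquad F(L,\theta) \coloneqq \bigl(Lg_0(\theta),\, [Lg_0'(\theta)]\bigr),
\]
and I will show that $F$ is in fact a submersion.

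Let $\Omega \subset (\R^2 \setminus \{0\}) \times \P^1$ denote the open subset of pairs $(u,[v])$ with $u$ and $v$ linearly independent. The group $\SL(2,\R)$ acts on $\Omega$ by $L \cdot (u,[v]) \coloneqq (Lu,[Lv])$, and I claim this action is simply transitive. For transitivity, given $(u,[v]) \in \Omega$, rescale $v$ so that $\det(u \,|\, v) = 1$ and let $L$ be the matrix with columns $u$ and $v$; then $L \in \SL(2,\R)$ maps $(e_1,[e_2])$ to $(u,[v])$. For freeness, a matrix fixing $(e_1,[e_2])$ must have first column $e_1$ and preserve the line $\R e_2$, forcing it to be the identity. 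Since $\dim \SL(2,\R) = 3 = \dim \Omega$, each orbit map $L \mapsto L \cdot (u,[v])$ is therefore a local diffeomorphism, and for any fixed $\theta$ the partial map $L \mapsto F(L,\theta)$ is already a submersion onto $\Omega$; a fortiori, so is $F$. Here I crucially use that $(g_0(\theta),[g_0'(\theta)]) \in \Omega$ for every $\theta$: since $K$ is centrally symmetric with nonempty interior, $0 \in \mathrm{int}\,K$, hence no tangent line to $\partial K$ passes through the origin, which means $g_0(\theta)$ and $g_0'(\theta)$ are linearly independent. The same observation applied to the hyperbolas $xy = \pm 1/2$ and to each ellipse $\partial E_t$ shows that $Z_1, Z_2 \subset \Omega$.

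Once $F$ is a submersion, it is transverse to every submanifold of $\Omega$, in particular to $Z_1$ and $Z_2$. The parametric transversality theorem then produces a residual (hence dense) subset $U_0 \subset \SL(2,\R)$ on which the slice $\hat{g_L} = F(L,\cdot) \colon \T \to \R^2 \times \P^1$ is transverse to both $Z_1$ and $Z_2$; by \cref{l.transversalities}, $LK$ is regular for each such $L$. It remains to upgrade density to open density, which follows because regularity of $LK$ is itself an open condition in $L$: transversality to $Z_1$ reduces, via the dimension count $1 + 1 < 3$, to the disjointness $\hat{g_L}(\T) \cap Z_1 = \emptyset$, which is open because $\T$ is compact and $Z_1$ is closed; transversality to $Z_2$ amounts to a finite collection of clean crossings in the compact $\T$, which persist under $C^1$-small perturbations of $L$ via the implicit function theorem. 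Setting $U \coloneqq \{L \in \SL(2,\R) : LK \text{ is regular}\}$ thus gives an open set containing $U_0$, hence open and dense. The main technical point, and the place where the argument could go wrong if mishandled, is the submersion property of $F$: it hinges on the simple transitivity of the $\SL(2,\R)$-action on $\Omega$ together with the coincidence of dimensions $\dim \SL(2,\R) = \dim \Omega = 3$, and on verifying that convexity keeps the relevant parametrizations inside $\Omega$.
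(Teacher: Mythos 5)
Your proof is correct and follows essentially the same route as the paper: the same space of independent pairs $(u,[v])$ (the paper calls it $Y$, you call it $\Omega$), the same observation that central symmetry forces $\hat{g}$ into this space, the same simply transitive $\SL(2,\R)$-action making $F$ a submersion, and the same appeal to the parametric transversality theorem followed by \cref{l.transversalities}. The only difference is cosmetic: you verify openness of $U$ by hand, whereas the paper cites a version of the transversality theorem that delivers open density directly.
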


\begin{proof}
Let $Y \subset \R^2 \times \P^1$ be the set of pairs $(u,[v])$ such that $u$ and $v$ are linearly independent. Note that $Z_1$, $Z_2$ are subsets of $Y$.
The group $\SL(2,\R)$ acts on $Y$ in the obvious way: $L(u,[v]) = (Lu, [Lv])$.
This action is smooth, transitive, and faithful; in particular $\SL(2,\R)$ and $Y$ are diffeomorphic.

Let $g \colon \T \to \R^2$ be a regular parametrization of $\partial K$.
Since $K$ is centrally symmetric, $\hat{g}$ takes values in $Y$.
The map $f \colon \T \times \SL(2,\R) \to Y$ defined by
$f(\theta, L) \coloneqq L \hat{g}(\theta)$
is a submersion.
Therefore, by the transversality theorem \cite[p.~68]{GP} (or see \cite[Theorem~2.7]{Hirsch} for a more precise version),
the set $U \subset \SL(2,\R)$ formed by those $L$ such that $f(\mathord{\cdot}, L) \colon \T \times \SL(2,\R) \to Y$ is transverse to $Z_1$ and to $Z_2$
is open and dense in $\SL(2,\R)$.
Take $L \in U$.
Noting that $f(\mathord{\cdot}, L)  = \widehat{L \circ g}$, it follows from \cref{l.transversalities} that the body $LK$ is regular.
\end{proof}

The previous \lcnamecref{p.transSL2R} implies the result we are looking for:

\begin{proof}[Proof of \cref{p.regularization}]
Given $K \in \cC^2$, we initially perturb it so that the area is unchanged and the boundary becomes smooth: for example, we can take an inscribed polygon, smoothen the corners, and inflate it to recover the area.  $\cC^2_\pi$ is open in $\cC^2$. 
Then by \cref{p.transSL2R} we can apply a element of $\SL(2,\R)$ close to the identity and so obtain the desired regular body approximating $K$.
\end{proof}

Finally, we check that $Z_1$ and $Z_2$ are indeed submanifolds.

\begin{proof}[Proof of \cref{l.submanifolds}]
The \emph{QR decomposition} comes in handy: there is a diffeomorphism $\Phi \colon \R \times \R \times \T \to \SL(2,\R)$ given by:
$$
\Phi(\tau, \rho , \xi)   
\coloneqq 
\begin{pmatrix}
	e^{-\tau/2}	& \rho \\
	0			& e^{\tau/2}
\end{pmatrix}
\begin{pmatrix}
	\cos \xi	& -\sin\xi \\
	\sin \xi	& \cos\xi
\end{pmatrix}
\, .
$$
Changing coordinates under $\Phi$, 
the left action of the diagonal subgroup corresponds to translation of the first coordinate.

Let $Y$ be as in the proof of \cref{p.transSL2R}.
Define a diffeomorphism $\Psi \colon \SL(2,\R) \to Y$ by $\Psi(L) \coloneqq (Le_1, [Le_2])$, where 
$\{e_1,e_2\}$ is the canonical basis of $\R^2$.
So the map $\Psi \circ \Phi$ allows us to put global coordinates $(t, \rho , \xi)$ on $Y$.

Note that, in the coordinates just described, $\widehat{\partial E_0}$ is given by equations $t = 0$, $\rho = 0$.
So, applying the diagonal subgroup, we conclude that $Z_2$ is the surface $\rho=0$. 
Analogously, $Z_1$ corresponds to $\rho = 0$ and $\xi \in \big\{\pm \tfrac{\pi}{4} , \pm \tfrac{3\pi}{4} \big\}$.
This proves \cref{l.submanifolds}.
\end{proof}


\subsection{Proof of the quantitative quasiconcavity Proposition~\ref{p.new_qqc}}

Let us begin by collecting the more direct consequences of \cref{p.formulas,p.tangency,p.key} in the following:

\begin{lemma}\label{l.collect}
Let $K \in \cC^2_\pi$ be regular, and let $T \subset \R$ be the corresponding (finite) set of tangency parameters.
Then the intersection function $I = I_K$ has the following properties:
\begin{enumerate}

\item\label{i.collect1}
$I$ is of class $C^1$.

\item\label{i.collect2}
The restriction of $I$ to the set $\R \setminus T$ is of class $C^2$.

\item\label{i.collect3}
For every $t \in \R \setminus T$ and $\epsilon>0$ we have:
$$
\epsilon \le I(t) \le \min\{\pi,\area{K} \} - \epsilon \quad \Rightarrow \quad
\max \big\{ |I'(t)| , -I''(t) \big\} > \delta(\epsilon) \, ,
$$
where $\delta(\mathord{\cdot})$ is the function from \cref{p.key}.

\item\label{i.collect4}
$I$ has a unique critical point $t_*$.

\item\label{i.collect5}
$I$ is increasing on $(-\infty, t_*]$ and decreasing on $[t_*, +\infty)$.
\end{enumerate}
\end{lemma}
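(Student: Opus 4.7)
The plan is to handle parts (\ref{i.collect1}), (\ref{i.collect2}), (\ref{i.collect3}) by reduction to the propositions already proved, and then deduce (\ref{i.collect4}) and (\ref{i.collect5}) from a local analysis of critical points.

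For the reduction, I would use the one-parameter subgroup $L_s \coloneqq \mathrm{diag}(e^{s/2}, e^{-s/2}) \in \SL(2,\R)$, which satisfies $L_s(E_t) = E_{t-s}$. For any $t_0 \in \R$, setting $K' \coloneqq L_{t_0}(K)$ gives $I_K(t) = I_{K'}(t - t_0)$, reducing the study of $I_K$ near $t_0$ to that of $I_{K'}$ near $0$. Crucially, regularity is preserved: the envelope hyperbolas $xy = \pm 1/2$ are fixed setwise by $L_{t_0}$ (since $(e^{t_0/2}x)(e^{-t_0/2}y) = xy$), so condition (\ref{i.G3}) transfers to $K'$, and the tangency set merely shifts by $-t_0$. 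Then for $t_0 \notin T$, \cref{p.formulas} applied to $K'$ yields (\ref{i.collect2}) and the $C^2$-regularity part of (\ref{i.collect1}); for $t_0 \in T$, \cref{p.tangency} applied to $K'$ yields the $C^1$-regularity part of (\ref{i.collect1}), using that the forbidden points $(\pm 1/\sqrt{2},\pm 1/\sqrt{2})$ on $\partial E_0$ are exactly $\partial E_0 \cap \{xy = \pm 1/2\}$. Property (\ref{i.collect3}) is immediate from \cref{p.key} applied to $K'$ at parameter $0$, since $\area{K'} = \area{K}$ and $I_{K'}(0) = I_K(t)$.

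For (\ref{i.collect4}) and (\ref{i.collect5}), I would first note that $I(t) > 0$ for every $t$ (because the origin is an interior point of both $K$ and $E_t$) and $I(t) \to 0$ as $|t| \to \infty$ (because $K$ is bounded while $E_t$ becomes arbitrarily thin in one direction), so $I$ attains a positive global maximum at some $t_*$. Moreover, the assumption $K \in \cC^2_\pi$ forces the strict inequality $I(t) < \min\{\pi,\area{K}\}$ for every $t$: equality $I(t) = \pi$ would give $E_t \subseteq K$ and hence $\area{\fJ_K} \ge \pi$, while equality $I(t) = \area{K}$ would give $K \subseteq E_t$ and hence $\area{\fL_K} \le \pi$; both contradict $K \in \cC^2_\pi$. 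Hence property (\ref{i.collect3}) is applicable at every critical point after choosing $\epsilon$ small enough. The heart of the argument is then the claim that every critical point $t_0$ of $I$ is a strict local maximum. Pick $\epsilon > 0$ with $\epsilon < \min\{I(t_0),\, \min\{\pi,\area{K}\} - I(t_0)\}$, and set $\delta \coloneqq \delta(\epsilon)$. By continuity of $I$ and $I'$, there is a neighborhood $U$ of $t_0$ on which $I \in [\epsilon, \min\{\pi,\area{K}\}-\epsilon]$, $|I'| < \delta$, and $U \cap T \subseteq \{t_0\}$. Property (\ref{i.collect3}) then forces $I''(t) < -\delta$ on $U \setminus T$, so $I'$ is strictly decreasing on each component of $U \setminus \{t_0\}$; combined with continuity of $I'$ at $t_0$ and the equation $I'(t_0) = 0$, this gives $I' > 0$ just to the left of $t_0$ and $I' < 0$ just to the right, proving the claim. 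From this, (\ref{i.collect4}) follows: two distinct critical points $t_0 < t_0'$, both strict local maxima, would force $I$ to decrease and then increase on $[t_0, t_0']$, producing an interior critical point $s_0$ that is a local minimum --- impossible, since $s_0$ would itself be a strict local maximum by the same claim. Finally, (\ref{i.collect5}) follows from (\ref{i.collect4}) together with the behavior at infinity: on each open half-line flanking $t_*$, the continuous function $I'$ has constant sign (since it has no other zeros), and that sign is pinned down by the requirement that $I$ rise from $0$ up to $I(t_*) > 0$ on the left and fall back to $0$ on the right.

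The main obstacle I anticipate is the bookkeeping to guarantee $I(t) < \min\{\pi,\area{K}\}$ strictly (which is what keeps (\ref{i.collect3}) applicable at every critical point), together with the case $t_0 \in T$ of the strict-local-maximum claim, where $I''(t_0)$ need not exist but continuity of $I'$ must still propagate the second-derivative estimate on $U \setminus \{t_0\}$ into an unambiguous sign change of $I'$ across $t_0$.
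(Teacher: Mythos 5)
Your proposal is correct and follows essentially the same route as the paper: reduce to $t=0$ via the diagonal one-parameter subgroup (checking that regularity is preserved), invoke \cref{p.tangency}, \cref{p.formulas}, and \cref{p.key} for parts (\ref{i.collect1})--(\ref{i.collect3}), and then use the uniform bound of part (\ref{i.collect3}) to show every critical point is an isolated strict local maximum, from which (\ref{i.collect4}) and (\ref{i.collect5}) follow. Your unified treatment of the cases $t_0\in T$ and $t_0\notin T$ via a neighborhood where $|I'|<\delta$ is a slightly cleaner packaging of the paper's two-case argument, but it is the same idea.
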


\begin{proof}
Fix a regular $K \in \cC^2_\pi$.
Consider the one-parameter subgroup $L_t \coloneqq \left( \begin{smallmatrix} e^{-t/2} & 0 \\ 0 & e^{t/2} \end{smallmatrix} \right) $ of $\SL(2,\R)$. Then $L_t(E_s) = E_{t+s}$ and therefore intersection functions of the images of $K$ under the subgroup are identical up to translations:
$$
I_K (t+s) = I_{L_{-t} K}(s) \, .
$$
Also note that regularity is invariant under the action of the subgroup.
The regularity property (\ref{i.reg2}) guarantees that $K$ fulfills the hypothesis of \cref{p.tangency} and therefore the function $I_K$ is $C^1$ on a neighborhood of $0$.
By invariance, $I_K$ is $C^1$ on the whole line, which is statement (\ref{i.collect1}) of the \lcnamecref{l.collect}.
Similarly, bearing in mind regularity property (\ref{i.reg2}), we see that \cref{p.formulas} implies that $I_K$ is $C^2$ on the set $\R \setminus T$, which is statement (\ref{i.collect2}), and that \cref{p.key} implies that the derivatives of $I_K$ satisfy the bounds stated in (\ref{i.collect3}).

The function $I_K$ obeys the inequalities $0 < I_K < \min\{\pi,\area{K} \}$ on the whole line;
the second inequality is a consequence of the assumption that $K \in \cC^2_\pi$.
Since the function $I_K$ vanishes at $\pm \infty$, it has critical points. 
Let $t_*$ be one of these.
On the one hand, if $t_* \not\in T$ then  $I_K$ is actually $C^2$ on a neighborhood $V$ of $t_*$ and $I_K''(t_*) < 0$ there. So, reducing the neighborhood $V$ if necessary, the function $I_K'$ becomes decreasing on $V$.
On the other hand, if $t_* \in T$ then 
we can find a neighborhood $V$ of $t_*$ such that $I_K$ is $C^2$ on $V \setminus \{t_*\}$ and $I_K'' < 0$ there. So the function $I_K'$ is decreasing on $V \setminus \{t_*\}$, and since it is continuous, it is actually decreasing on $V$.

We have shown that every critical point of $I_K$ is isolated and is a local maximum. Therefore the critical point, which exists, is unique, proving statement~(\ref{i.collect4}).
We have also seen that the critical point is a local maximum, and so statement~(\ref{i.collect5}) follows.
\end{proof}

\begin{proof}[Proof of \cref{p.new_qqc}]
Let $\epsilon>0$ and $r>0$ be given.
Without loss of generality, we assume $r\le 1$.
Let $\delta = \delta(\epsilon) > 0$ be given by \cref{p.key} and let
$\eta \coloneqq \min \bigl\{ \epsilon, \tfrac{1}{2}\delta r^2 \bigr\}$.
Fix a regular $K \in \cC^2_\pi$ and for simplicity write $I \coloneqq I_K$ and $b \coloneqq \min\{\pi, \area{K}\}$.
Fix the three numbers $t_0<t_1<t_2$ satisfying the assumptions, namely
$t_1 \in [t_0+r, t_2-r]$ and $I(t_1) \in [2\epsilon, b-2\epsilon]$.
We suppose that $t_1 \ge t_*$, where $t_*$ is the critical point of $I$, the other case being analogous.
Since $I(t_2) \le I(t_1 + r)$, it is sufficient to prove that:
\begin{equation*} 
I(t_1 + r) \le I(t_1) - \eta \, .
\end{equation*}
This clearly holds if $I(t_1 + r) < \epsilon$, so assume that $I(t_1 + r) \ge \epsilon$.

Next, suppose $I' \le -\delta$ over the interval $J \coloneqq [t_1,t_1+r]$.
Then, by the Mean Value Theorem, $I(t_1 + r) - I(t_1) \le -\delta r < - \eta$, 
completing the proof in this case.
So assume that $I' > -\delta$ somewhere on $J$.

Note that $I' \le 0$ and $\epsilon \le I \le b-2\epsilon$ on the interval  $J$.
It follows from part~\ref{i.collect3} of \cref{l.collect} that $\max \{-I'(t),-I''(t)\} > \delta$ for every $t \in J$ except a finite number of points where the second derivative may not be defined.
So $I'$ is decreasing on the set $S \coloneq \{ t \in J \st I'(t) > -\delta\}$, which is nonempty by assumption. It follows that $S$ must be an interval with left endpoint $t_1$. Let $s$ be the right endpoint. Then:
$$
t \in [t_1,s] \ \Rightarrow \ 
I'(t)  = \underbrace{I'(t_1)}_{\le 0} + \int_{t_1}^{t} \underbrace{\vphantom{I'(t_1)} I''(u)}_{<-\delta} \dd u \le -\delta (t-t_1) \, ,
$$
while
$$
t \in [s,t_1+r] \ \Rightarrow \ 
I'(t) \le -\delta \le -\delta (t-t_1) \text{ as well}
$$
(using that $r \le 1$).
So:
\begin{equation*}
I(t_1 + r) - I(t_1) 
\le \int_{t_1}^{t_1+r} -\delta (t-t_1) \dd t 
=   -\tfrac{1}{2} \delta r^2 
\le - \eta \, ,
\end{equation*}
as we wanted to show.
\end{proof}

As explained in \cref{ss.setup}, \cref{t.qc} follows.

\subsection{An extension of Theorem~\ref{t.qc}}\label{ss.extension}

The intersection function $I_K$ of any $K \in \cC^2$ is always bounded by the value $\min\{\pi,\area{K} \}$. 
If $K \not\in \cC^2_\pi$ then $I_K$ may have a plateau at this value and therefore may fail to be strictly quasiconcave.
Therefore the assumption $K \in \cC^2_\pi$ cannot be removed altogether from \cref{t.qc}.
On the other hand, this assumption is only used to guarantee that $I_K < \min\{\pi,\area{K} \}$ everywhere.
In fact, it is straightforward to modify the proof of \cref{t.qc} and obtain the following result:

\begin{theorem}\label{t.extension}
Let $K \in \cC^2$.
Let $J \subseteq \R$ be an interval such that $I_K (t) < \max\{\pi, \area{K}\}$ for every $t \in J$.
Then the restriction of $I_K$ to $J$ is a strictly quasiconcave function.
\end{theorem}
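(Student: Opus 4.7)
The plan is to follow the proof of Theorem~\ref{t.qc} essentially verbatim, noting that the assumption $K \in \cC^2_\pi$ enters there only in two ways: it ensures that the constant $\epsilon \coloneqq \tfrac{1}{3}\min\bigl\{I_K(t_1),\, \max\{\pi, \area{K}\} - I_K(t_1)\bigr\}$ used in the argument is strictly positive, and it guarantees that the regularizing perturbation $\tilde K$ produced by Proposition~\ref{p.regularization} remains in $\cC^2_\pi$, so that the quantitative quasiconcavity Proposition~\ref{p.new_qqc} applies to $\tilde K$. Under the hypothesis of Theorem~\ref{t.extension}, the constant $\epsilon$ is automatically positive for every $t_1 \in J$, so the first issue disappears.

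For the second issue I would prove a localized version of Proposition~\ref{p.new_qqc}, dropping the assumption $K \in \cC^2_\pi$ but keeping the same conclusion under the same local hypothesis $2\epsilon \le I_K(t_1) \le \max\{\pi, \area{K}\} - 2\epsilon$. The proof mirrors the original one, with a single modification: the use of parts~(d)--(e) of Lemma~\ref{l.collect} (existence and uniqueness of the critical point $t_*$ of $I_K$) must be replaced by a symmetric treatment of both sides. Namely, without loss of generality $I_K(t_0) \le I_K(t_2)$, and one establishes $I_K(t_1 - r) \le I_K(t_1) - \eta$ by running the derivative analysis of Proposition~\ref{p.new_qqc} on the interval $[t_1 - r, t_1]$ (interchanging the roles of left and right). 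This argument rests only on parts~(a)--(c) of Lemma~\ref{l.collect}, whose proofs invoke Propositions~\ref{p.formulas}, \ref{p.tangency}, and~\ref{p.key} on small neighborhoods of individual parameter values and therefore remain valid for any regular $K \in \cC^2$, regardless of whether $K \in \cC^2_\pi$.

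With the modified proposition in hand, the end of the proof runs exactly as for Theorem~\ref{t.qc}: apply Proposition~\ref{p.regularization} to obtain a regular $\tilde K \in \cC^2$ with $\area{\tilde K} = \area{K}$ and $d_\mathrm{sym}(\tilde K, K) < \eta/2$ (for $\eta$ furnished by the modified proposition with parameters $\epsilon$ and $r \coloneqq \min\{t_1 - t_0, t_2 - t_1\}$); transfer the estimate on $I_K(t_1)$ to the corresponding estimate on $I_{\tilde K}(t_1)$ via relation~\eqref{e.inclusion_exclusion}; apply the modified proposition to $\tilde K$; and conclude strict quasiconcavity of $I_K$ on $J$ by taking the obvious limit.

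The main obstacle I expect is checking that the derivative analysis used in Proposition~\ref{p.new_qqc} is genuinely local: the unique global critical point $t_*$ is invoked only as an organizational device to reduce the two-sided statement to a one-sided one, and can be bypassed by a symmetric argument. If that bypass is clean, the extension is indeed \emph{straightforward}, as the authors suggest.
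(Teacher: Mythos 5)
Your overall strategy---localize the proof of \cref{t.qc} to the interval $J$---is exactly what the paper intends (its own ``proof'' is the one-sentence remark preceding the statement), and your observation that the constant $\epsilon$ is automatically positive under the hypothesis on $J$ is correct. (In passing: the $\max\{\pi,\area{K}\}$ in the statement must be read as $\min\{\pi,\area{K}\}$, since $I_K$ is always bounded by the latter; with $\max$ the hypothesis is vacuous whenever $\pi\neq\area{K}$ and the conclusion false for bodies with a plateau. This typo propagates into the hypothesis $2\epsilon\le I_K(t_1)\le\max\{\pi,\area{K}\}-2\epsilon$ of your localized proposition.) The genuine gap is in your claim that parts~(\ref{i.collect4})--(\ref{i.collect5}) of \cref{l.collect} can be discarded and that the argument ``rests only on parts~(\ref{i.collect1})--(\ref{i.collect3})''. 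The monotonicity of $I$ on one side of $t_1$ is used \emph{substantively} in the proof of \cref{p.new_qqc}, not merely as an organizational device: it is what reduces $I(t_2)$ to $I(t_1+r)$, what guarantees $\epsilon\le I\le b-2\epsilon$ on all of $[t_1,t_1+r]$ so that part~(\ref{i.collect3}) applies there, and what gives $I'(t_1)\le 0$ in the final integration. Your proposed case division ``WLOG $I_K(t_0)\le I_K(t_2)$, then work on $[t_1-r,t_1]$'' supplies none of this: it does not determine the sign of $I'$ on $[t_1-r,t_1]$ (the critical point could lie inside that very interval, e.g.\ when $I$ increases on $[t_0,t_*]$ and decreases on $[t_*,t_2]$ with $t_1-r<t_*<t_1$ and $I(t_0)\le I(t_2)$), and even if you proved $I(t_1-r)\le I(t_1)-\eta$ you would still need $I(t_0)\le I(t_1-r)$, which is again a monotonicity statement.

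The correct repair is not to bypass (\ref{i.collect4})--(\ref{i.collect5}) but to localize them: on any interval where $I_K<\min\{\pi,\area{K}\}$, every critical point $t$ satisfies $0<I_K(t)<\min\{\pi,\area{K}\}$, so part~(\ref{i.collect3}) (applied with a $t$-dependent $\epsilon$) shows, exactly as in the paper's proof of \cref{l.collect}, that $t$ is an isolated local maximum; hence $I_K$ has at most one critical point in that interval and is unimodal there. With this local unimodality, the case division by the position of the critical point relative to $t_1$ and the remainder of your argument (regularization, transfer of estimates via \eqref{e.inclusion_exclusion}, passage to the limit) go through verbatim---with the caveat that the localized proposition must also assume $I_K<\min\{\pi,\area{K}\}$ on all of $[t_0,t_2]$, a condition you must preserve (by shrinking $\epsilon$ using compactness of $[t_0,t_2]$) when passing to the perturbation $\tilde K$. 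So the extension is indeed straightforward, but the straightforward route passes through a localized version of (\ref{i.collect4})--(\ref{i.collect5}), which your write-up explicitly disclaims.
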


\section{Discarding ellipses with displaced centers: Proof of Lemma~\ref{l.discard}} \label{s.discard}

Let us finally prove \cref{l.discard}, which, as seen in the introduction, allows us to deduce the uniqueness \cref{t.unique} from \cref{t.qc}.
We rely on the following result, which is essentially a corollary of the Brunn--Minkowski inequality and holds in arbitrary dimension:

\begin{proposition}[Zalgaller~\cite{Z}]\label{p.Z}
Let $K_1$, $K_2 \subset \R^d$ be convex bodies.
Let $M$ be the set of $v \in \R^d$ that maximize of the volume of $K_1 \cap (K_2+v)$.
Then $M$ is a nonempty compact convex set, and the sets $K_1 \cap (K_2+v)$ with $v \in M$ are identical up to translation.
\end{proposition}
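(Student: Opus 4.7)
The plan is built around the Brunn--Minkowski inequality $\vol{(1-t)A_0 + tA_1}^{1/d} \ge (1-t)\vol{A_0}^{1/d} + t\vol{A_1}^{1/d}$, valid for convex bodies $A_0, A_1 \subset \R^d$ and $t \in [0,1]$, with equality (when both volumes are positive) if and only if $A_0$ and $A_1$ are homothetic. Set $f(v) \coloneqq \vol{K_1 \cap (K_2+v)}$. First I would note that $f$ is continuous (it is a convolution of indicator functions), that it vanishes outside the compact set $K_1 - K_2$, and that it is positive at any $v$ for which $K_2+v$ meets the interior of $K_1$. Hence $M = f^{-1}(\max f)$ is nonempty and compact, with $\max f > 0$.

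For convexity of $M$ I would rely on the key inclusion
\begin{equation*}
(1-t)A_0 + t A_1 \;\subseteq\; K_1 \cap (K_2 + v_t),
\end{equation*}
where $v_t \coloneqq (1-t)v_0 + tv_1$ and $A_i \coloneqq K_1 \cap (K_2 + v_i)$. This follows from convexity of $K_1$ and $K_2$: if $x_i \in A_i$, then $(1-t)x_0 + tx_1 \in K_1$, and subtracting $v_t$ from this convex combination yields a convex combination of points in $K_2$. Combining the inclusion with Brunn--Minkowski applied to the left side gives
\begin{equation*}
f(v_t)^{1/d} \;\ge\; (1-t) f(v_0)^{1/d} + t\, f(v_1)^{1/d},
\end{equation*}
so $f^{1/d}$ is concave on the compact convex set $\{f>0\}$; in particular, its maximizer set $M$ is convex.

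For the last statement, take $v_0, v_1 \in M$. Both inequalities collapsing $f(v_t)^{1/d}$ between a Brunn--Minkowski lower bound and the trivial upper bound from the inclusion must be equalities, since all three values $f(v_0), f(v_1), f(v_t)$ equal $\max f$. The equality case of Brunn--Minkowski then forces $A_0$ and $A_1$ to be homothetic; having equal positive volume, they are translates of each other, as required. The main (minor) obstacle is invoking the equality case of Brunn--Minkowski in the correct form: one must quote the classical statement for convex bodies (e.g.\ Schneider's treatise) and check that $A_0, A_1$ really are convex bodies at maximizers, which is immediate since they are closed and convex with $\vol{A_i} = \max f > 0$, hence have nonempty interior.
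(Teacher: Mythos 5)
Your proof is correct and follows exactly the route the paper indicates: the paper does not prove \cref{p.Z} but cites Zalgaller and describes it as ``essentially a corollary of the Brunn--Minkowski inequality,'' which is precisely the inclusion-plus-equality-case argument you carry out. The only (harmless) slip is calling $\{f>0\}$ compact --- it is a bounded convex \emph{open} set, since $f$ is continuous --- but this does not affect the convexity of $M$ or anything else.
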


\begin{proof}[Proof of \cref{l.discard}]
Let $K \subset \R^2$ be a centrally symmetric convex body.
For a contradiction, suppose that $K$ admits an MI ellipse with area $\lambda$ in the range $\area{\fJ_K} <  \lambda < \area{\fL_K}$ which is not centrally symmetric, and write it as $E + v_0$, where $E$ is centrally symmetric and $v_0 \neq 0$.
Applying an appropriate linear map if necessary, we can assume that $E$ is the unit disk $E_0$ and that $v_0$ is horizontal, i.e.\ $v_0 = (\epsilon_0, 0)$.

Let $M \ni v_0$ be the set of $v \in \R^2$ such that that maximize $\area{K \cap (E_0 + v)} = \area{(K-v)\cap E_0}$, which by \cref{p.Z} is compact and convex. Since $K$ and $E_0$ are centrally symmetric, so is $M$.
In particular, $0 \in M$ and $E_0$ is also an MI ellipse for $K$.

The \lcnamecref{p.Z} also says that for each $v \in M$ the set $(K+v) \cap E_0$ is a translate of $K \cap E_0$, say $(K\cap E_0) + u$. Consider some $z \in K \cap \partial E_0$ (which exists since $K \not \subseteq E_0$). Then both points $z + u$ and $-z + u$ belong to $E_0$, which forces $u = 0$. We have shown that the sets $(K+v) \cap E_0$ with $v \in M$ are actually identical: no translation is needed.

Since $M$ contains the segment $[-v_0,v_0]$, for every $z \in K \cap E_0$, the intersection of the segment $z+[-v_0,v_0]$ with $E_0$ is contained in $K$.
By overlapping such segments, we conclude that the intersection of the line $z + \R v_0$ with $E_0$ is contained in $K$. This property implies that $K \cap E_0$ equals $S \cap E_0$, where $S = \R \times [-b,b]$ is a strip in the plane.
Since $E_0 \not \subseteq K$, we must have $0<b<1$.
Using that $K \cap (E_0 \pm v_0) = (K \cap E_0) \pm v_0$, we conclude that there is a neighborhood $V$ of the unit disk $E_0$ such that $V \cap K = V \cap S$.
In particular, $\partial K$ is transverse to the unit circle $\partial E_0$ and there are $4$ crossings, namely $(\pm \sqrt{1-b^2}, \pm b)$.
Therefore we may apply \cref{p.formulas}, and conclude that if $\xi_1 \coloneqq \arcsin b$ then 
$I'_K(0) = \sin 2 \xi_1 > 0$.
So for sufficiently small $t>0$, the set $E_{-t} \cap K$ has a bigger area than $E_0 \cap K$, which contradicts the fact that $E_0$ is an MI ellipse for $K$.
\end{proof}

\section{Analysis of maximum intersection positions}\label{s.MI_position}

\subsection{Characterization of MI positions for the transverse case}

Recall from \cref{s.derivatives} that two Jordan curves in the plane are called transverse if each of them is of class $C^1$ at a neighborhood of each point of intersection, and that these intersections are transverse in the usual sense.

\begin{theorem}\label{t.sum_squares}
Let $K \subset \R^2$ is a compact convex centrally symmetric set whose boundary $\partial K$ is transverse to the unit circle $S^1$. 
Let $\zeta_1$, \dots, $\zeta_{4n}$ be the points of intersection, cyclically ordered.
Then $K$ is in MI position if and only if
\begin{equation}\label{e.two_sums}
\sum_{j \ \mathrm{odd}} \zeta_j^2 =  \sum_{j \ \mathrm{even}} \zeta_j^2 \, ,
\end{equation}
where we identify $\R^2$ and $\C$ in the usual way.
\end{theorem}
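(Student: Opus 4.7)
The plan is to interpret MI position as a critical-point-plus-local-maximum condition for $I_K$ on the two-dimensional manifold $\cH$ of centrally symmetric area-$\pi$ ellipses, and to convert the vanishing of the gradient at $E_0$ into the identity \eqref{e.two_sums}.

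A first step is to reduce to centered ellipses. Since $K = -K$, the Zalgaller-type argument from the proof of \cref{l.discard} (invoking \cref{p.Z}) shows that a maximizer of $\area{K \cap E}$ over area-$\pi$ ellipses may always be taken centered, so $K$ is in MI position if and only if $E_0$ maximizes $I_K$ on $\cH$. Next, I would compute the gradient of $I_K$ at $E_0$ along two complementary $1$-parameter families spanning the tangent space: the standard family $E_t$ and its $\pi/4$-rotate $\tilde E_s \coloneqq R_{\pi/4}E_s$. A rotation of $K$ by $-\theta$ shifts each $\xi_j$ to $\xi_j - \theta$, so \cref{p.formulas} gives the two derivatives at the origin as $\tfrac{1}{2}\sum_j(-1)^j \sin 2\xi_j$ and $-\tfrac{1}{2}\sum_j(-1)^j \cos 2\xi_j$. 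Identifying $\R^2 \cong \C$ so that $\zeta_j = e^{i\xi_j}$, these are the imaginary and negative real parts of $\tfrac{1}{2}\sum_j(-1)^j \zeta_j^2$, so both derivatives vanish precisely when $\sum_{j \text{ odd}}\zeta_j^2 = \sum_{j \text{ even}}\zeta_j^2$.

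The forward implication is then immediate: if $K$ is in MI position, $E_0$ maximizes $I_K$ along both families, so both derivatives vanish and \eqref{e.two_sums} holds. For the reverse, assume \eqref{e.two_sums}. By linearity of the derivative formula with respect to $\theta$, the $t$-derivative of $I_K$ along every rotated family $\{R_\theta E_t\}_{t \in \R}$ vanishes at $t=0$. Writing $I_K(R_\theta E_t) = I_{R_{-\theta}K}(E_t)$ and applying \cref{p.key} to the body $R_{-\theta}K$ (whose transversality and area hypotheses are rotation-invariant), I conclude that the second derivative along that family is strictly negative, so $t=0$ is a strict local maximum. Since \cref{t.extension} makes $I_{R_{-\theta}K}$ strictly quasiconcave---and a strictly quasiconcave function admits at most one local maximum, which is then also its global maximum---the ellipse $E_0$ is the global maximum of $I_K$ along every rotated family. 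As $\theta$ ranges over $[0,\pi)$ these families sweep out all of $\cH$ through $E_0$, so $E_0$ globally maximizes $I_K$ on $\cH$, and the reduction step yields MI position.

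The main conceptual obstacle, and the reason \cref{p.key} is indispensable here rather than just \cref{t.extension}, is that strict quasiconcavity combined with a vanishing derivative does \emph{not} force a $C^1$ function to attain its maximum at the critical point (for instance, $f(x) = 2x^3 - x^4$ is $C^1$ and strictly quasiconcave on $\R$ with $f'(0) = 0$, yet attains its maximum at $x = 3/2$). The quantitative bound of \cref{p.key}, which guarantees a strictly negative second derivative whenever the first derivative vanishes, is precisely what upgrades the critical point $E_0$ to a local maximum, after which quasiconcavity takes over to give globality.
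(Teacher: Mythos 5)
Your overall strategy coincides with the paper's: the derivative formula of \cref{p.formulas}, applied to all rotations of $K$, gives the equivalence between the vanishing of the gradient of $I_K$ at $E_0$ and condition \eqref{e.two_sums}; for the converse one upgrades the critical point to a strict local maximum via \cref{p.key} and then to the global maximum via strict quasiconcavity. Your closing observation about why \cref{p.key}, rather than quasiconcavity alone, is needed to pass from ``critical point'' to ``local maximum'' is correct and well taken.

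There is, however, a genuine gap in the converse direction: you invoke \cref{t.extension} to assert that $I_{R_{-\theta}K}$ is strictly quasiconcave on all of $\R$, but that theorem only yields strict quasiconcavity on an interval $J$ where $I_K$ stays strictly below $\min\{\pi,\area{K}\}$ (the discussion of plateaus in \cref{ss.extension} shows the hypothesis cannot be dispensed with). Transversality of $\partial K$ with $S^1$ gives this strict inequality at $t=0$ only. If, say, $\area{\fJ_K}\ge\pi$, then $K$ contains some centered ellipse $R_{\theta_0}E_{t_0}$ of area $\pi$, so $I_{R_{-\theta_0}K}(t_0)=\pi>I_K(0)$ and $E_0$ is \emph{not} the global maximum along that rotated family --- which is exactly the conclusion you are trying to reach. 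One must therefore rule this scenario out, i.e., prove $K\in\cC^2_\pi$, and this does not follow from anything you have established (note also that \cref{l.discard}, which you cite for the reduction to centered ellipses, assumes this very range condition; the unconditional existence of centered MI ellipsoids from \cite{AAK} is what actually justifies that reduction). The paper devotes the second half of its proof to precisely this point: assuming $\area{\fJ_K}\ge\pi$, it rotates so that the contained ellipse is $E_{t_*}$ with $t_*>0$, shrinks $t_*$ so that $I_K<\pi$ on $[0,t_*)$, and observes that the local maximum at $0$ together with $I_K(t_*)=\pi$ forces an interior local minimum on that interval, contradicting \cref{t.extension} applied there; the Loewner side is symmetric. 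Your argument needs this (or an equivalent) step before the quasiconcavity argument can close.
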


\begin{proof}
Write $\zeta_j = e^{\mathrm{i} \xi_j}$. 
If $K$ is in MI position then the derivative given by formula \eqref{e.1st} vanishes; moreover, the same is true if we apply a rotation to $K$, i.e., replace each $\xi_j$ by $\xi_j + \phi$.
Therefore:
$$
\sum_{j=1}^{2n} (-1)^j \sin 2(\xi_j+\phi) = 0 \quad \text{for all } \phi \in \R \, . 
$$
Using a trigonometric identity, we see that the latter condition is equivalent to:
$$
\sum_{j=1}^{2n} (-1)^j \cos 2 \xi_j  = \sum_{j=1}^{2n} (-1)^j \sin 2 \xi_j  = 0 \ ,
$$
which is condition \eqref{e.two_sums}.

Conversely, suppose that condition \eqref{e.two_sums} holds.
Then, reversing the arguments above, we obtain that for every $\phi \in \R$, the intersection function of the rotated convex body $e^{\mathrm{i} \phi} K$ is $C^1$ and its derivative at $t = 0$ vanishes.
Furthermore, by \cref{p.key}, the second derivative is also defined and is negative.
Hence, among centrally symmetric ellipses of area $\pi$, the unit disk $E_0$ attains a local maximum for the area of intersection with $K$. 
If we knew that $K \in \cC^2_\pi$ (and therefore $e^{\mathrm{i} \phi} K \in \cC^2_\pi$ for every $\phi$) then we could apply the strict quasiconcavity \cref{t.qc} and conclude that this local maximum is the global maximum, that is, $K$ is in MI position.
In order to conclude the proof we will show that $K \in \cC^2_\pi$, that is, $\area{\fJ_K} < \pi < \area{\fL_K}$.

Suppose for a contradiction that the John ellipse $\fJ_K$ has area $\ge \pi$.
Since $\fJ_K$ is centrally symmetric, it follows that $K$ contains a centrally symmetric ellipse $E$ of area $\pi$.
This ellipse $E$ cannot be the disk $E_0$ since we are assuming that $K$ and $E_0$ have transverse boundaries.
Applying a rotation if necessary, we can assume that $E = E_{t_*}$ for some $t_* > 0$.
Then the intersection function $I_K$ satisfies $I_K(t_*) = \pi$.
Reducing $t_*$ if necessary, we can assume that $I_K(t) < \pi$ for all $t$ in the interval $J \coloneqq [0, t_*)$.
As seen before, the function $I_K$ attains a local maximum at $0$.
It follows the function $I_K$ attains a local minimum somewhere in the interior of $J$.
This contradicts \cref{t.extension}.
Therefore $\area{\fJ_K} \le \pi$. 
A similar reasoning proves that $\area{\fL_K} \ge \pi$. 
So $K \in \cC^2_\pi$, as claimed, and the \lcnamecref{t.sum_squares} follows.
\end{proof}

\cref{p.48} is actually a corollary:

\begin{proof}[Proof of \cref{p.48}]
Suppose that $K$ is MI position with boundary transverse to the unit circle and intersecting it at the points $\zeta_1$, \dots, $\zeta_{4n}$, listed in counterclockwise order. Note that $\zeta_{j+2n}= - \zeta_j$. 

If $n=1$ then by \eqref{e.two_sums} we would have $\zeta_1^2 = \zeta_2^2$, i.e., $\zeta_1$ and $\zeta_2$ are antipodal; absurd. This proves part (\ref{i.4}).

Now suppose $n=2$.
We want to prove that $\zeta_{j+2} = \mathrm{i} \zeta_j$.
Condition \eqref{e.two_sums} becomes:
$$
\zeta_1^2 + \zeta_3^2 = \zeta_2^2 + \zeta_4^2 \, . 
$$
So we must prove that both sides of this equation vanish.
Suppose that is not the case. 
Observe that a pair of non-antipodal points in the unit circle is uniquely determined (modulo permutation) by their midpoint.
Therefore $\{\zeta_1^2 , \zeta_3^2 \} = \{\zeta_2^2 , \zeta_4^2 \}$.
But $\zeta_1^2$, $\zeta_2^2$, $\zeta_3^2$, $\zeta_4^2$ are distinct (and cyclically ordered).
We have reached a contradiction.
This proves part (\ref{i.8}).
\end{proof}

\subsection{Comparison with the classical characterization of John position}

Let $\mu$ be a positive (and nonzero) Borel measure on the unit sphere $S^{d-1}$.
We say that $\mu$ is \emph{balanced} if
$$
\int_{S^{d-1}} u \dd\mu(u) = 0 \, ,
$$
that is, the center of mass of $\mu$ is the origin.
We say that $\mu$ is \emph{isotropic} if, for some $c>0$,
$$
\forall v \in \R^d \, , \quad 
\int_{S^{d-1}} \langle u,v \rangle^2 \dd\mu(u) = c \|v\|^2 
$$
(where $\| \mathord{\cdot} \|$ denotes euclidian norm),
that is, the inertia ellipsoid of $\mu$ with respect to the origin is round.
One necessarily has $c = \frac{1}{d}\mu(S^{d-1})$.
See e.g.\ \cite[\S~10.13]{Schneider} for several uses of isotropic measures in convex geometry.

We say that a convex body in $K\subset \R^d$ is in \emph{John position} if its John ellipsoid is the euclidian unit ball. The following theorem is well-known:

\begin{theorem}[John] \label{t.John}
If a convex body $K\subset \R^d$ is in John position then there exists a balanced isotropic measure supported on $S^{d-1} \cap \partial K$.  
\end{theorem}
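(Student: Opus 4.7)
The plan is to prove John's theorem by contradiction, combining a Hahn--Banach separation in a finite-dimensional vector space with a first-order volume perturbation of ellipsoids inscribed in $K$.

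First, set $X \coloneqq S^{d-1} \cap \partial K$, the contact set of the unit ball with $\partial K$, which is closed and nonempty (else $B$ could be slightly dilated inside $K$, contradicting maximality). Reformulate the conclusion as a convex-geometric condition on $X$: existence of a balanced isotropic measure supported on $X$ is equivalent to $(0, \Id) \in \Gamma$, where $\Gamma \subseteq \R^d \times \mathrm{Sym}_d(\R)$ is the convex cone generated by the image of the continuous map $\phi(u) \coloneqq (u, uu^T)$ for $u \in X$. Indeed, the two defining integral identities for a measure $\mu$ read precisely $\int \phi \dd\mu = (0, c\,\Id)$ with $c > 0$, and $c$ can be absorbed into $\mu$. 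The cone $\Gamma$ is closed: by Carath\'eodory's theorem every element is a nonnegative combination of at most $d + \binom{d+1}{2} + 1$ terms $\phi(u_i)$ with $u_i \in X$, and this parameterization is the continuous image of a compact set.

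Suppose for contradiction that $(0, \Id) \notin \Gamma$. Hahn--Banach separation for a closed convex cone and an exterior point, applied with the natural pairing $\langle (v, M), (b, S)\rangle \coloneqq \langle v, b\rangle + \mathrm{tr}(MS)$, yields $b \in \R^d$ and a symmetric $S$ satisfying
$$
\langle b, u \rangle + u^T S u \;\ge\; 0 \quad \text{for every } u \in X,
\qquad \text{while} \qquad
\mathrm{tr}(S) \;<\; 0.
$$
The idea is now to turn $(b,S)$ into an inscribed ellipsoid larger than $B$. Pick $\gamma > 0$ with $\mathrm{tr}(S) + d\gamma < 0$, set $\tilde S \coloneqq S + \gamma\,\Id$, and for small $\epsilon > 0$ let
$$
E_\epsilon \coloneqq (\Id - \epsilon \tilde S)\,B - \epsilon b.
$$
Then $\vol{E_\epsilon} = \bigl(1 - \epsilon\,\mathrm{tr}(\tilde S) + O(\epsilon^2)\bigr)\vol{B} > \vol{B}$ for small $\epsilon > 0$. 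To check $E_\epsilon \subseteq K$, compare support functions: for $u \in S^{d-1}$,
$$
h_{E_\epsilon}(u) = 1 - \epsilon\bigl(\langle b, u\rangle + u^T S u + \gamma\bigr) + O(\epsilon^2),
$$
with error uniform in $u$. The inclusion $B \subseteq K$ gives $h_K \ge 1$ everywhere, with equality exactly on $X$. By compactness, outside any fixed neighborhood of $X$ we have $h_K - 1 \ge \delta > 0$, so the $O(\epsilon)$ perturbation is dominated there. On a neighborhood of $X$, the strict buffer $\gamma$ combined with continuity of $u \mapsto \langle b, u\rangle + u^T S u$ (which is $\ge 0$ on $X$) ensures $h_{E_\epsilon}(u) \le 1 \le h_K(u)$ for all small enough $\epsilon$. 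Thus $E_\epsilon$ is an ellipsoid inscribed in $K$ with strictly larger volume than $B$, contradicting the John property.

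The genuinely delicate point is the analysis near $X$: the Hahn--Banach separation only supplies a \emph{non-strict} inequality $\langle b,u\rangle + u^T S u \ge 0$ on $X$, so without modification the second-order error $O(\epsilon^2)$ could push $E_\epsilon$ outside $K$ at those points where equality holds. The auxiliary parameter $\gamma > 0$---chosen small enough that $\mathrm{tr}(\tilde S) < 0$ is preserved, so the volume gain survives---creates a linear-order gap that absorbs the quadratic error. This small trick is the only subtle step in the argument; everything else is standard Hahn--Banach together with the first-order Taylor expansions of $\det(\Id - \epsilon\tilde S)$ and of the support function of a linearly perturbed ball.
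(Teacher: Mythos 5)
The paper states this theorem as classical and supplies no proof of its own (it simply cites it as well known), so there is no internal argument to compare against; judged on its own terms, your proof is correct and is essentially the standard separation-plus-perturbation proof of John's theorem found in the classical references. The reformulation as $(0,\Id)$ belonging to the cone $\Gamma$ generated by $\{(u,uu^T) : u\in X\}$, the Hahn--Banach separation yielding $(b,S)$ with $\langle b,u\rangle+u^TSu\ge 0$ on $X$ and $\mathrm{tr}(S)<0$, and the competitor ellipsoid $(\Id-\epsilon\tilde S)B-\epsilon b$ with the buffer $\gamma$ absorbing the $O(\epsilon^2)$ error near the contact set are all sound; in particular your identification of $\{h_K=1\}$ with $X$ and the compactness argument away from $X$ are correct. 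One small point of rigor: your one-line justification that $\Gamma$ is closed (``continuous image of a compact set'') does not quite work, since the Carath\'eodory coefficients range over an unbounded orthant. The correct statement is that the convex cone generated by a compact set $C$ with $0\notin\mathrm{conv}(C)$ is closed (write a nonzero element as $\lambda y$ with $y\in\mathrm{conv}(C)$ and use that $\mathrm{conv}(C)$ is compact and bounded away from the origin); here $0\notin\mathrm{conv}(\phi(X))$ because the trace of the matrix component of any convex combination equals $1$. This is easily repaired and does not affect the argument.
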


Here we prove a similar result for the planar MI position:

\begin{proposition}\label{p.isotropic}
If a centrally symmetric body $K \subset \R^2$ is in MI position then there exists a balanced isotropic measure supported on $S^1 \cap \partial K$.  
\end{proposition}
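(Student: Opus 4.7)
The plan is to reformulate the existence of a balanced isotropic measure as a convex-hull condition via Hahn--Banach and then derive a contradiction with MI position, with the two extreme boundary cases handled by the classical John and Loewner theorems.

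First I set up the duality. Identifying $\R^2 \cong \C$, any symmetric positive measure on the centrally symmetric set $S^1 \cap \partial K$ is automatically balanced, so only the isotropic condition $\int u^2\,d\mu = 0$ (a single complex equation) needs to hold. By Hahn--Banach and the compactness of $S^1 \cap \partial K$, a non-trivial such measure exists if and only if $0 \in \mathrm{conv}\{u^2 : u \in S^1 \cap \partial K\}$. If this fails, there exist $b \in \C \setminus \{0\}$ and $\delta > 0$ with $\Re(\bar{b}\,u^2) \ge \delta$ on $S^1 \cap \partial K$; after rotating $K$ by $-\tfrac{1}{2}\arg b$ (which preserves MI position) I may take $b = 1$, so $\cos 2\xi \ge \delta$ for every $e^{\mathrm{i}\xi} \in \partial K \cap S^1$. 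In particular $\partial K$ is disjoint from the closed top and bottom arcs of $S^1$ where $\cos 2\xi \le 0$, and by central symmetry and continuity one of two cases holds: (A) these arcs of $S^1$ lie in $\mathrm{int}(K)$, or (B) they lie outside $K$.

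Next I extract the first-order consequence of MI position. In the regular case of \cref{s.implication}, applying \cref{p.formulas} to the family $E_t$ and to each of its rotations yields $\sum_j (-1)^j e^{2\mathrm{i}\xi_j} = 0$, which via integration by parts is equivalent to
\[
\int_{\{G > 1\}} e^{2\mathrm{i}\theta}\, d\theta \;=\; 0,
\]
where $G(\theta) = r_{\partial K}(\theta)^2$ and $\{G > 1\} = S^1 \cap \mathrm{int}(K)$. For general $K \in \cC^2$ I would extend this identity by regularizing via \cref{p.regularization}, using continuity of the MI operator (a consequence of \cref{t.unique}) to realign each approximant into MI position with respect to $E_0$, and then passing to a weak-$*$ limit; this limiting step is the main technical obstacle I anticipate. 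Applied to case (A), the fact that $\cos 2\theta$ integrates to $-2$ over the top and bottom arcs forces $\{G > 1\}$ to also fill almost all of the right and left arcs, yielding $E_0 \subseteq K$ and $\area{\fJ_K} \ge \pi$; in case (B), the strict positivity of $\cos 2\theta$ on the right and left arcs, which contain $\{G > 1\}$, forces $\{G > 1\}$ to have measure zero, yielding $K \subseteq E_0$ and $\area{\fL_K} \le \pi$.

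Finally, I invoke \cref{t.unique}. If $\area{\fJ_K} < \pi < \area{\fL_K}$, neither conclusion of (A) nor (B) is possible, so the Hahn--Banach hypothesis fails and the measure exists. If $\area{\fJ_K} = \pi$, then (A) identifies $E_0$ as the John ellipse $\fJ_K$, and \cref{t.John} supplies a balanced isotropic measure on $\partial \fJ_K \cap \partial K = S^1 \cap \partial K$, again contradicting the Hahn--Banach hypothesis; the symmetric case $\area{\fL_K} = \pi$ is handled dually by the Loewner theorem.
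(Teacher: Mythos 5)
Your reduction of the statement to the convex-hull condition $0 \in \mathrm{conv}\{u^2 : u \in S^1\cap\partial K\}$ (balancedness being automatic by symmetry, isotropy on $S^1$ being equivalent to $\int u^2\dd\mu=0$), and the construction of the measure from a convex combination, coincide with the paper's argument. Where you genuinely diverge is in refuting the separation hypothesis. The paper invokes \cref{t.sum_squares} (condition \eqref{e.two_sums}, which is exactly your identity $\sum_j(-1)^j\zeta_j^2=0$) and gets a contradiction by projecting the points $\zeta_j^2$ onto the separating line: the ``entering'' and ``exiting'' points alternate along that line, so the two sums in \eqref{e.two_sums} have distinct projections. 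Your route --- separation forces the arcs $\{\cos 2\theta\le 0\}$ to lie entirely inside or entirely outside $K$, and the integral form $\int_{\{G>1\}}e^{2\mathrm{i}\theta}\dd\theta=0$ then forces $E_0\subseteq K$ or $K\subseteq E_0$ --- is correct and closes cleanly \emph{in the transverse case}; there you do not even need \cref{t.John}, since either inclusion is already incompatible with the existence of transverse crossings.

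The genuine gap is the one you flag yourself: establishing the first-order identity for a general $K$ in MI position. For non-regular $K$ the function $I_K$ need not be differentiable at $0$, and passing $\int_{\{G_\epsilon>1\}}e^{2\mathrm{i}\theta}\dd\theta=0$ to the limit requires $\{G_\epsilon>1\}\to\{G>1\}$ in measure, which fails precisely when $S^1\cap\partial K$ has positive length --- one of the degenerate situations you are trying to reach. The paper sidesteps this entirely by perturbing at the level of the \emph{conclusion} rather than of the intermediate identity: it proves the proposition first for transverse approximants of $K$ (re-aligned into MI position), obtains a balanced isotropic measure for each, and passes to a weak-$*$ limit, using only that balanced isotropic measures form a weak-$*$-closed set. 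Restructuring your argument this way removes the obstacle. A smaller omission: your endgame treats $\area{\fJ_K}=\pi$ and $\area{\fL_K}=\pi$ via John/Loewner but leaves $\area{\fJ_K}>\pi$ and $\area{\fL_K}<\pi$ untreated; in the transverse case these cannot coexist with actual crossings, which is another reason to do the transverse case first and only then perturb.
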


See the paper \cite{AAK} for another result on the existence of balanced isotropic measures for bodies in MI position, under certain generic assumptions, and without restriction on dimension.

\begin{proof}[Proof of \cref{p.isotropic}]
It suffices to consider the case of $\partial K$ transverse to $S^1$; the general case then follows by perturbation and using the fact that balanced isotropic measures form a weakly-$*$-closed set.

Let $\zeta_1$, \dots, $\zeta_{4n}$ be the points in $S^1 \cap \partial K$, cyclically ordered.
Note that $\zeta_{j+2n}= - \zeta_j$. 

We claim that the convex hull of the points $\zeta_j^2$ contains the origin.
If not, there exists a line $L \subset \R^2$ through the origin such that all points $\zeta_j^2$ belong to the same connected component of $\R^2 \subset L$.
Let $P \colon \R^2 \to L$ be the orthogonal projection onto $L$.
Since the points $\zeta_1^2$, \dots, $\zeta_{2n}^2 \in S^1$ are distinct,
their projections $z_1 \coloneqq P(\zeta_1^2)$, \dots, $z_{2n} \coloneqq P(\zeta_{2n}^2)$ are distinct. 
Color each $z_j$ red or blue depending on whether $j$ is odd or even.
Since the points $\zeta_1^2$, \dots, $\zeta_{2n}^2 \in S^1$ are cyclically ordered, the colors of the points $z_1$, \dots, $z_{2n} \in L$ alternate.
In particular, equation \eqref{e.two_sums} cannot hold, since the two sums have different projections. 
By \cref{t.sum_squares}, the set $K$ is not in MI position. 
This contradicts the assumption, and therefore we proved that the convex hull of the points $\zeta_j^2$ contains the origin.

Hence there exist weights $p_1$, \dots, $p_{4n} \ge 0$ such that $\sum_j p_j = 1$ and $\sum_j p_j \zeta_j^2 = 0$.
Since $\zeta_{j+2n}^2 = \zeta_j^2$, we can assume that the weights satisfy $p_{j+2n}= p_j$ (indices taken mod~$4n$).  
Then the measure on $S^1 \cap \partial K$ defined by $\mu \coloneqq \sum_j p_j \delta_{\zeta_j}$ is 
balanced. 
Let us check that $\mu$ is isotropic.
Note that the (real) euclidian inner product in $\C$ is given by the formula $\langle u,v \rangle = \Re(u \bar{v})$ and so satisfies the identity:
$$
\langle u,v \rangle^2 = \frac{|u^2 v^2| + \langle u^2, v^2 \rangle}{2}  \, .
$$ 
Using this, we calculate, for arbitrary $v \in \C$,
$$
\int_{S^1} \langle u, v \rangle^2 \dd\mu(u) =
\sum_j p_j \langle \zeta_j, v \rangle^2 =
\frac{\sum_j p_j}{2} |v|^2  + \frac{1}{2} \left\langle {\textstyle\sum_j p_j \zeta_j^2} , v^2 \right\rangle = \frac{|v|^2}{2}  \, ,
$$
proving that $\mu$ is isotropic.
\end{proof}

\begin{remark}
The converse of \cref{t.John} also holds, as shown by Ball \cite{Ball92}.
However, \emph{the converse of \cref{p.isotropic} is false.}
Indeed, fix any $\epsilon \neq \frac{\pi}{6}$ in the range $0 < \epsilon < \frac{\pi}{4}$, and let $Z$ be the set consisting of $12$ points in the unit circle that is invariant by a quarter turn and contains $\{e^{-\mathrm{i}\epsilon}, 1, e^{\mathrm{i}\epsilon}\}$.
Then the equidistributed probability measure on $Z$ is balanced and isotropic.
Take a centrally symmetric convex body $K \subset \R^2$ whose boundary $\partial K$ is transverse to the unit circle $S^1$ and intersects it exactly on $Z$. 
Then condition \eqref{e.two_sums} does not hold; indeed the two sums are 
$2 - 4 \cos 2\epsilon \neq -2 + 4 \cos 2\epsilon$.
By \cref{t.sum_squares}, the set $K$ is not in MI position. 
\end{remark}

\section{Directions for future research}\label{s.questions}

We pose a few questions:

\begin{enumerate}[label={\thesection.\arabic*.},ref={\thesection.\arabic*}]

\item\label{i.q_asymmetric}
Can the assumption of central symmetry be removed from the main \cref{t.unique}?

\item\label{i.q_unconstrained_area} 
Given a (say, centrally symmetric) convex body $K \subset \R^2$, is there a unique ellipse that best approximates it with respect to symmetric difference metric (without constraining its area)?

\item\label{i.q_normalized_SD} 
The previous question for the normalized symmetric difference metric $d_\mathrm{nsym}$, defined by \eqref{e.metrics}.

\item\label{i.q_log_concave} 
Given an arbitrary $K \in \cC^2_\pi$, is the intersection function $I_K$ log-concave? (See \cite[\S~4]{AAK} for a stronger conjecture, motivation, and relations with known results.)

\end{enumerate}

Some of these questions are possibly accessible with the methods of this paper.
In any case, the investigation of the higher-dimensional versions of \cref{t.unique,t.qc} and of the questions above will require new methods.

\medskip

\begin{ack}
I thank Paula Porto for drawing most of the figures.
I thank W{\l}odek Kuperberg for posing the problem that motivated this paper, for telling me that ellipses with displaced centers could be discarded, for pointing to reference \cite{Z}, and for suggesting Question~\ref{i.q_normalized_SD}.
I am grateful to the referees for corrections, references, and criticism that allowed me to improve the paper significantly. I particularly thank one of the referees for suggesting to go beyond \cref{q.Wlodek} and to consider the full family of ellipses interpolating between John and Loewner ellipses, and also for posing questions that led to the results presented in \cref{s.MI_position}.
\end{ack}


\bigskip

\footnotesize
\textit{Web-page:} \href{http://www.mat.uc.cl/~jairo.bochi/}{\tt www.mat.uc.cl/\textasciitilde jairo.bochi}
\medskip

\end{document}